\newtheorem{theorem}{Theorem}[section]
\newtheorem*{theorem-nn}{Theorem}
\newtheorem{corollary}[theorem]{Corollary}
\newtheorem{lemma}[theorem]{Lemma}
\newtheorem{proposition}[theorem]{Proposition}
\newtheorem*{question-nn}{Question}
\theoremstyle{definition}
\newtheorem{definition}[theorem]{Definition}
\theoremstyle{remark}
\newtheorem{remark}[theorem]{Remark}
\newtheorem{example}[theorem]{Example}
\def\norm{\@ifnextchar[{\norm@i}{\norm@i[]}}
\def\norm@i[#1]{\@ifnextchar[{\norm@ii[#1]}{\norm@ii[#1][]}}
\def\norm@ii[#1][#2]#3{%
\ifx\\#3\\
  \mathrm{N}^{#1}_{#2}
\else
  \mathrm{N}^{#1}_{#2}(#3)
  \fi}
\newsavebox\boxFormula
\newsavebox\boxOverline
\newlength\lenFormula
\newlength\lenOverline
\def\closure{\@ifnextchar[{\closure@i}{\closure@i[0]}}
\def\closure@i[#1]{\@ifnextchar[{\closure@ii[#1]}{\closure@ii[#1][0]}}
\def\closure@ii[#1][#2]#3{
\sbox{\boxFormula}{$\m@th#3$}%
\setbox\boxOverline\null%
\ht\boxOverline=\ht\boxFormula%
\dp\boxOverline=\dp\boxFormula%
\setlength\lenOverline{\the\wd\boxFormula}%
\advance\lenOverline by -#1pt%
\advance\lenOverline by -#2pt%
\wd\boxOverline=\lenOverline%
\sbox\boxOverline{$\m@th\overline{\copy\boxOverline}$}%
\setlength\lenFormula{\the\wd\boxFormula}%
\addtolength\lenFormula{-\the\wd\boxOverline}%
\rlap{\hskip #1pt \usebox\boxOverline}{\usebox\boxFormula}%
}
\newcommand{\invx}{\closure[2][0.3]{X}}
\newcommand{\invy}{\closure[0][0]{Y}}
\newcommand{\invz}{\closure[2][0]{Z}}
\newcommand{\invxcupy}{\closure[2][0]{X \cup Y}}
\newcommand{\clF}{\closure[1.7][0]{F}}
\newcommand{\clG}{\closure[1.5][0]{G}}
\newcommand{\Ast}{\mathop{\scalebox{1.8}{\raisebox{-0.2ex}{$\ast$}}}}%
\newcommand{\cansc}[1]{\mathcal{S}_{#1}}
\newcommand{\diam}{\mathrm{diam}}
\newcommand{\gilcube}{[0,1]^{\omega}}
\newcommand{\hker}[1]{\mathrm{K}_{#1}}
\newcommand{\homgilcube}{\mathrm{Homeo}\bigl( \gilcube \bigr)}
\newcommand{\frpr}[2]{\tensor*[_{#1}]{*}{_{#2}}}
\newcommand{\frprod}[2]{G\! \frpr{\phi_{#1}}{\phi_{#2}}\! H}
\newcommand{\roots}[1]{#1^{\vn}}
\newcommand{\vn}{\varnothing}
\newcommand{\word}[1]{\mathrm{W}(#1)}
\begin{document}
\title{Graev ultrametrics and free products of Polish groups}
\keywords{Graev metrics, ultrametrics, free products}

\author{Konstantin Slutsky}
\address{Institut for Matematiske Fag \\
  K\o benhavns Universitet\\
  Universitetspaken 5\\
  2100 K\o benhavn \O \\
  Denmark}
\email{kslutsky@gmail.com}

\thanks{Research supported by Denmark's Council for Independent Research (Natural Sciences Division), grant
  no. 10-082689/FNU}
\begin{abstract}
  We construct Graev ultrametrics on free products of groups with two-sided invariant ultrametrics and HNN
  extensions of such groups.  We also introduce a notion of a free product of general Polish groups and prove, in
  particular, that two Polish groups \( G \) and \( H \) can be embedded into a Polish group \( T \) in such a way that
  the subgroup of \( T \) generated by \( G \) and \( H \) is isomorphic to the free product \( G*H \).
\end{abstract}

\maketitle

\section{Introduction}
\label{sec:introduction}

Mark Graev~\cite{MR0038357} gave a construction of two-sided invariant metrics on free groups which now bear his name.
Starting from a pointed metric space \( (X,d,e) \), the Graev metric \( \delta \) is a two-sided invariant metric on the
free group \( F(X) \).  The precise construction of \( \delta \) will be explained below, but it is characterized by
being \emph{the largest two-sided invariant metric on \( F(X) \) that extends \( d \),} where we view \( X \) as being
embedded into \( F(X) \) in a natural way.  In the group theory free groups are important, among other reasons, as
surjectively universal objects: any group is a factor of a free group.  Graev metrics and their generalizations proved
to be very useful in constructing surjectively universal objects in various classes of metrizable groups.

For instance, let \( \mathbb{N}^{\mathbb{N}} \) denote the \emph{Baire space:} the space of infinite sequences of
natural numbers with the metric
\[ d(x,y) = \sup \{\, 2^{-n} \mid n \in \mathbb{N},\ x(n) \ne y(n) \,\}. \]
Let \( \clF(\mathbb{N}^{\mathbb{N}}) \) be the group completion of \( F(\mathbb{N}^{\mathbb{N}}) \)
endowed with the Graev metric (with respect to any distinguished point).

\begin{theorem-nn}[Folklore, see Theorem 2.11 in \cite{MR1288299}]
  \label{thm:Baire-Graev-tsi-universal}
  The group \( \clF(\mathbb{N}^{\mathbb{N}}) \) is surjectively universal in the class of Polish groups that
  admit compatible two-sided invariant metrics.
\end{theorem-nn}
An important question raised in \cite{MR1288299} and further advertised in \cite{MR1425877} is whether there is a
universal Polish group.  Motivated by this question, L.~Ding and S.~Gao \cite{MR2278689} constructed generalized Graev
metrics, and based on this construction Ding \cite{MR2970459} answered the question of Kechris in the affirmative.  In a
recent paper Gao \cite{MR3028617} addressed the question of the existence of surjectively universal Polish
\emph{ultrametric\/} groups and gave yet another modification of Graev's original definition.  The latter paper of Gao
motivates our study of the Graev ultrametrics on free products of ultrametric groups.

\subsection{Main results}
\label{sec:main-results}

The main results of this work are twofold.  In Section \ref{sec:graev-ultr-free} we give the constructions of Graev
ultrametrics for free products and HNN extensions of groups with two-sided invariant ultrametrics.  In particular we
prove
\begin{theorem-nn}[see Theorem \ref{thm:Graev-ultrametric}]
  Let \( (G,d_{G}) \) and \( (H,d_{H}) \) be groups with two-sided invariant ultrametrics, and let \( A = G \cap H \) be
  a common closed subgroup.  There exists a two-sided invariant ultrametric on the free product with
  amalgamation \( G*_{A}H \) that extends ultrametrics \( d_{G} \) and \( d_{H} \).
\end{theorem-nn}

\begin{theorem-nn}[see Theorem \ref{thm:graev-metrics-on-hnn-extensions}]
  Let \( (G, d\,) \) be a group with a two-sided invariant ultrametric \( d \), \( A \) and \( B \) be closed
  subgroups of \( G \) and \( \phi : A \to B \) be an isometric isomorphism.  If \( \diam(A) \le K \), then there
  exists a two-sided invariant ultrametric \( \delta \) on the HNN extension \( H \) of \( (G, \phi) \) which extends \(
  d \) and such that \( \delta(t, e) = K \), where \( t \) is the stable letter of \( H \).
\end{theorem-nn}

While we follow closely the methods of \cite{1111.1538}, the formalism for trivial words used in this paper is
different.  We introduce a new notion of a maximal evaluation forest and argue that it provides a more unified tool for
studying Graev metrics on free products than the notion of an evaluation tree.

In Section \ref{sec:free-products-polish} we step outside of the two-sided invariant world and define a notion of a free
product of general Polish groups.  The results of Section \ref{sec:free-products-polish} are new for both the metric and
the ultrametric settings.  Among other things we prove that

\begin{theorem-nn}[see Theorem \ref{thm:Polish-groups-generate-free-product}]
  Let \( G \) and \( H \) be Polish groups.  There are a Polish group \( T \) and embeddings
  \( \psi_{G} : G \hookrightarrow T \), \( \psi_{H} : H \hookrightarrow T \) such that
  \( \langle \psi_{G}(G), \psi_{H}(H) \rangle \) is naturally isomorphic to the free product \( G * H \).  Moreover, if
  \( G \) and \( H \) admit compatible left invariant ultrametrics, then \( T \) can be chosen to admit such a metric as
  well.
\end{theorem-nn}

\subsection{Notions and notations}
\label{sec:notions-notations}

To establish the terminology, recall that an \emph{ultrametric space\/} is a metric space \( (X,d\,) \) in which the
metric satisfies a strong form of the triangle inequality:
\[ d(x_{1}, x_{2}) \le \max \bigl\{ d(x_{1},x_{3}), d(x_{3},x_{2}) \bigr\} \]
for all \( x_{1}, x_{2}, x_{3} \in X \). A \emph{Polish space\/} is a separable completely metrizable topological space,
and a \emph{Polish group\/} is a topological group which is a Polish space.  By a \emph{metric group\/} we mean a
pair \( (G,d\,) \), where \( G \) is a topological group, \( d \) is a metric on \( G \), and the topology induced by
\( d \) coincides with the topology of \( G \); such metrics will be called \emph{compatible}.  A metric \( d \) on
\( G \) is said to be \emph{left invariant\/} if
\[ d(fg_{1}, fg_{2}) = d(g_{1},g_{2}) \]
for all \( f, g_{1}, g_{2} \in G \); the definition of a \emph{right invariant\/} metric is symmetric.  A metric \( d \)
on \( G \) is \emph{two-sided invariant\/} if it is both left and right invariant.

We also need the notion of a group completion.  If \( (G,d\,) \) is a metric group with a left invariant metric \( d \),
we let \( D \) be the metric on \( G \) defined by
\[ D(g_{1},g_{2}) = d(g_{1}, g_{2}) + d\bigl(g_{1}^{-1},\, g_{2}^{-1}\bigr). \]
Note that \( D \) is compatible with the topology of \( G \), but in general it is neither left nor right invariant.
Let \( (\clG, D) \) denote the Hausdorff completion of the metric space \( (G,D) \). It turns out that the group
operations on \( G \) admit a unique extension to \( \clG \), and the complete metric \( D \) turns \( \clG \) into a
topological group.  The group \( \clG \) is called the \emph{group completion\/} of \( G \).  As a topological group
\( \clG \) does not dependent on the choice of the compatible left invariant metric \( d \) on \( G \).

Given two (ultra)metric spaces \( (X,d_{X}) \) and \( (Y,d_{Y}) \) and a common subspace \( A = X \cap Y \) with
\[ d_{X}(a_{1}, a_{2}) = d_{Y}(a_{1}, a_{2}) \quad \forall a_{1}, a_{2} \in A, \]
we define the \emph{(ultra)metric amalgam of \( X \) and \( Y \) over \( A \)} to be the metric space \( (Z, d_{Z}) \),
\( Z = X \cup Y \), \( d_{Z} \) extends both \( d_{X} \) and \( d_{Y} \), and for \( x \in X \) and \( y \in Y \)
\begin{displaymath}
  d_{Z}(x, y) =
  \begin{cases}
    \inf\limits_{a \in A} \bigl( d_{X}(x, a) + d_{Y}(a, y) \bigr)  & \textrm{in the metric setting},\\
    \inf\limits_{a \in A} \max \bigl\{ d_{X}(x, a), d_{Y}(a, y) \bigr\} & \textrm{in the ultrametric setting}.\\
  \end{cases}
\end{displaymath}
Note that \( (Z,d_{Z}) \) is again an (ultra)metric space and that \( (X,d_{X}) \) and \( (Y,d_{Y}) \) are naturally
subspaces of \( Z \).  By taking isometric copies of spaces we can define the amalgamation of \( (X,d_{X}) \) and
\( (Y,d_{Y}) \) over \( (A,d_{A}) \) whenever we have two isometric inclusions \( \iota_{X} : A \to X \) and
\( \iota_{Y} : A \to Y \).

Two-sided invariant (ultra)metrics are characterized among left invariant (ultra)metrics by the following inequality.
\begin{proposition}
  \label{prop:chracteristic-tsi-inequality-ultrametric}
  Let \( d \) be a left invariant metric on a group \( G \).  The metric \( d \) is two-sided invariant if and only
  if
  \[ d(g_{1} \cdots g_{n}, f_{1} \cdots f_{n}) \le \sum\limits_{i = 1}^{n} d(g_{i}, f_{i}) \]
  for all \( g_{i}, f_{i} \in G \) and all \( n \in \mathbb{N} \).  If \( d \) is an ultrametric, then moreover
  \[ d(g_{1} \cdots g_{n}, f_{1} \cdots f_{n}) \le \max\limits_{1 \le i \le n}\{d(g_{i}, f_{i})\}. \]
\end{proposition}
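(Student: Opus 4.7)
The plan is to handle the two directions separately, with the forward direction by induction on \( n \) and the reverse direction by specializing the inequality at \( n = 2 \).

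For the forward direction, assume \( d \) is two-sided invariant and proceed by induction on \( n \). The case \( n = 1 \) is trivial. For the inductive step, I would telescope between the two products by swapping one coordinate at a time. Specifically, the triangle inequality gives
\[ d(g_{1} \cdots g_{n},\, f_{1} \cdots f_{n}) \le d(g_{1} g_{2} \cdots g_{n},\, f_{1} g_{2} \cdots g_{n}) + d(f_{1} g_{2} \cdots g_{n},\, f_{1} f_{2} \cdots f_{n}). \]
Right invariance collapses the first summand to \( d(g_{1}, f_{1}) \), and left invariance collapses the second to \( d(g_{2} \cdots g_{n},\, f_{2} \cdots f_{n}) \), to which the inductive hypothesis applies. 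In the ultrametric case the same telescoping works verbatim, with the triangle inequality replaced by the ultrametric inequality, and the sum replaced by a maximum at each step; the maximum of a maximum is again a maximum, so the induction goes through unchanged.

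For the reverse direction, assume the sum inequality holds for all \( n \); I only need the case \( n = 2 \). Taking \( g_{1} = f_{1} = f \) yields \( d(f g_{2}, f f_{2}) \le d(g_{2}, f_{2}) \), which is automatic from left invariance. Taking \( g_{2} = f_{2} = g \) instead yields \( d(g_{1} g, f_{1} g) \le d(g_{1}, f_{1}) \). Substituting \( g_{1} \mapsto g_{1} g^{-1} \), \( f_{1} \mapsto f_{1} g^{-1} \) gives the opposite inequality, so \( d(g_{1} g, f_{1} g) = d(g_{1}, f_{1}) \) and \( d \) is right invariant, hence two-sided invariant. The ultrametric version implies the sum version (since the maximum is bounded by the sum of nonnegative terms), so the same argument applies.

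I do not expect any real obstacle. The only point that requires a moment of thought is choosing the telescoping intermediate word \( f_{1} g_{2} \cdots g_{n} \) so that one step is controlled by right invariance and the other by left invariance; once that is in place, both the metric and ultrametric versions are routine.
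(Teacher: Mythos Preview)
Your proof is correct. The paper states this proposition without proof, treating it as a standard elementary fact, so there is nothing to compare against; your telescoping argument for the forward direction and the specialization to \( n = 2 \) for the converse are exactly the expected route.
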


With a left invariant pseudo-metric \( d \) on a group \( G \) we may associate a pseudo-norm
\( \norm{} : G \to \mathbb{R}^{+} \) defined by \( \norm{f} = d(f,e) \) and satisfying for all
\( g, g_{1}, g_{2} \in G \)
\begin{enumerate}[(i)]
\item \( \norm{g} \ge 0 \), \( \norm{e} = 0 \); \( \norm{g} > 0 \) for \( g \ne e \) if and only if \( d \) is a metric;
\item \( \norm{g} = \norm{g^{-1}} \);
\item\label{item:triangle-inequality} \( \norm{g_{1} g_{2}} \le \norm{g_{1}} + \norm{g_{2}} \);
\end{enumerate}
If \( d \) is an ultrametric, then item \eqref{item:triangle-inequality} becomes
\begin{enumerate}[(i)]
\item[\eqref{item:triangle-inequality}\( ' \)] \( \norm{g_{1} g_{2}} \le \max\{ \norm{g_{1}}, \norm{g_{2}} \} \).
\end{enumerate}
If \( d \) is two-sided invariant, then
\begin{enumerate}[(i)]
\setcounter{enumi}{3}
\item \( \norm{g g_{1} g^{-1}} = \norm{g_{1}} \).
\end{enumerate}
The correspondence between left invariant metrics and norms is bijective: \( d(g_{1},g_{2}) = \norm{g_{1}^{-1} g_{2}} \)
is a left invariant metric on \( G \) for any norm \( \norm{} \).

Expression \( [m,n] \) will denote the interval of natural numbers \( \{m, m+1, \ldots, n-1, n\} \).  For a set \( X \)
we let \( \word{X} \) to denote the set of nonempty words in the alphabet \( X \).  The length of a word
\( w \in \word{X} \) is denoted by \( |w| \) and \( w(i) \) denotes its \( i \)th letter.  If \( w \in \word{X} \) is a
word of length \( n \) and \( F \subseteq [1,n] \), \( F = \{j_{1}, \ldots, j_{m}\} \) with
\( j_{1} < j_{2} < \cdots < j_{m} \), then \( w(F) \) denotes the word \( w(j_{1})w(j_{2}) \cdots w(j_{m}) \).  The
minimal element of \( F \) is denoted by \( m(F) \), and \( M(F) \) denotes its maximal element: \( m(F) = j_{1} \),
\( M(F) = j_{m} \).

\subsection{Graev (ultra)metrics on free groups.}
\label{sec:scal-gener-graev}

We now describe the construction of Graev metrics and Graev ultrametrics on free groups following \cite{MR2278689} and
\cite{MR3028617}.  Since these two constructions are very similar, we give them in parallel. A \emph{pointed
  (ultra)metric space\/} is a triple \( (X,d,e) \), where \( (X,d\,) \) is an (ultra)metric space and \( e \in X \) is a
distinguished point.  Let \( X^{-1} \) denote a copy of \( X \) with elements of \( X^{-1} \) being formal inverses of
the elements of \( X \) with the agreement \( X \cap X^{-1} = \{e\} \), that is, \( e^{-1} = e \).  Extend \( d \) to an
(ultra)metric on \( X^{-1} \) by declaring \( d\bigl( x^{-1},y^{-1} \bigr) = d(x,y) \) for all \( x, y \in X \).  Let
\( (\invx, d, e) \) denote the (ultra)metric amalgam of \( (X, d\,) \) and \( (X^{-1},d\,) \) over the subspace
\( \{ e \} = X \cap X^{-1} \), see Figure \ref{fig:invX}.  We can extend the inverse \( x \mapsto x^{-1} \) to a
function on \( \invx \) by setting \( (x^{-1})^{-1} = x \).  To summarize, starting from a pointed (ultra)metric space
\( (X,d,e) \) we construct in a canonical way a pointed (ultra)metric space \( (\invx, d, e) \), \( X \) is a subspace
of \( \invx \) and the function \( \invx \ni x \mapsto x^{-1} \) is an isometric involution.  We shall say that
\( \invx \) is obtained from \( X \) by \emph{adding formal inverses}.

\begin{figure}[htb]
  \includegraphics{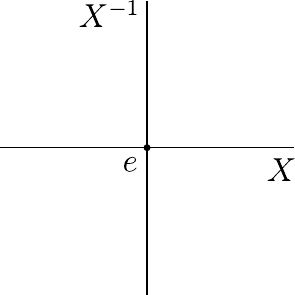}
  \caption{\( \skew8\overline{\rlap{\it X}\hbox to6.4pt{}} \)\, is the amalgam of \( X \) and \( X^{-1} \) over
    \( \{e\} \).}
  \label{fig:invX}
\end{figure}

\noindent By \( F(X) \) we denote the free group with generators \( X \setminus \{ e \} \) (therefore there is a slight
abuse of notations, since a proper notation would be \( F(X \setminus \{e\}) \)).  The set \( \invx \) is viewed as a
subset of \( F(X) \), where \( e \in \invx \) is identified with the identity element of the free group.  We have a
natural \emph{evaluation map\/} \( \word{\invx} \ni w \mapsto \hat{w} \in F(X) \), \( \hat{w} \) being just the
reduced form of \( w \).  This map is surjective.  For two words \( u_{1}, u_{2} \in \word{\invx} \) which have equal
lengths \( |u_{1}| = n = |u_{2}| \) we define
\begin{displaymath}
  \rho(u_{1}, u_{2})  =
  \begin{cases}
    \sum\limits_{i=1}^{n} d\bigl( u_{1}(i), u_{2}(i) \bigr) & \textrm{in the metric case},\\
    \max\limits_{i\le n} \bigl\{ d\bigl( u_{1}(i), u_{2}(i) \bigr) \bigr\} & \textrm{in the ultametric case}.
  \end{cases}
\end{displaymath}
Finally, the Graev (ultra)metric \( \delta \) on \( F(X) \) is defined by
\[ \delta(f_{1}, f_{2}) = \inf\bigl\{\, \rho(u_{1}, u_{2}) \,\bigm|\, u_{i} \in \word{\invx},\ \hat{u}_{i} = f_{i},\
|u_{1}| = |u_{2}| \,\bigr\}. \]

\begin{theorem}[Graev \cite{MR0038357}, Gao \cite{MR3028617}]
  \label{thm:graev-metric}
  The function \( \delta \) is a two-sided invariant (ultra)metric on the group \( F(X) \).  Moreover, \( \delta \)
  extends \( d \) on \( \invx \).
\end{theorem}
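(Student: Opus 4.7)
The plan is to verify each required property of $\delta$ by manipulations of representative words, saving the combinatorial lower-bound argument for the end. Symmetry of $\delta$ is inherited from that of $\rho$. For the (ultra)triangle inequality, fix $\epsilon > 0$ and choose equal-length words $u_1, u_2 \in \word{\invx}$ with $\hat{u}_i = f_i$ and $\rho(u_1, u_2) < \delta(f_1, f_2) + \epsilon$, and analogously equal-length words $v_2, v_3$ with $\hat{v}_i = f_i$ and $\rho(v_2, v_3) < \delta(f_2, f_3) + \epsilon$. Writing $v_2^{-1}$ for the word obtained from $v_2$ by reversing the order and replacing each letter by its formal inverse in $\invx$, form $W_1 = u_1\, v_2^{-1}\, v_2$ and $W_3 = u_2\, v_2^{-1}\, v_3$. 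These are words of equal length, they evaluate in $F(X)$ to $f_1$ and $f_3$ respectively, and since the middle block contributes zero to $\rho(W_1, W_3)$ (matching letters appear in the same positions), we obtain $\rho(W_1, W_3) = \rho(u_1, u_2) + \rho(v_2, v_3)$ in the metric case and $\rho(W_1, W_3) = \max\{\rho(u_1, u_2), \rho(v_2, v_3)\}$ in the ultrametric case; letting $\epsilon \to 0$ yields the (ultra)triangle inequality. For left invariance, prepending any word representing $g$ to both $u_1$ and $u_2$ yields representations of $gf_1$ and $gf_2$ whose $\rho$-value coincides with $\rho(u_1, u_2)$; right invariance is symmetric.

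For the extension claim that $\delta$ agrees with $d$ on $\invx$, the bound $\delta(x, y) \le d(x, y)$ is immediate from the length-one representations $u_1 = x$, $u_2 = y$. The reverse inequality, together with positivity of $\delta$ on non-identity elements of $F(X)$ (without which $\delta$ would be only a pseudo-metric), is the technical heart of the statement, since a priori some very long pair of representations might produce an arbitrarily small $\rho$-value.

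The main obstacle is therefore a uniform lower bound on $\rho(u_1, u_2)$ in terms of the reduced form of $\hat{u}_1 \hat{u}_2^{-1}$. I would proceed by concatenating $u_1$ with the reversed-inverted word of $u_2$ to form a length-$2n$ word whose evaluation in $F(X)$ is $f_1 f_2^{-1}$. Free reduction induces a canonical non-crossing matching on the $2n$ positions, pairing each cancelled letter with its cancelling partner; the unmatched positions spell the reduced form of $f_1 f_2^{-1}$. Since each matched pair consists of formal inverses, one can iterate the (ultra)triangle inequality for $d$ along the matching --- summing costs in the metric case, taking maxima in the ultrametric case --- to bound $d(\hat{u}_1, \hat{u}_2)$ from above by $\rho(u_1, u_2)$ whenever the endpoints lie in $\invx$, and more generally to produce a strictly positive lower bound whenever $f_1 \ne f_2$ in $F(X)$. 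This simultaneously yields positivity of $\delta$ and the extension property. The delicate bookkeeping of these cancellations is precisely what the paper's later notion of a maximal evaluation forest is designed to streamline in the more intricate setting of free products.
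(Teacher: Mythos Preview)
The paper does not give its own proof of this theorem: it is quoted as a classical result of Graev (metric case) and Gao (ultrametric case), and the substantive content is deferred to the immediately following Theorem~\ref{thm:computaion-of-Graev-metrics}, the match formula
\[
  \delta(f,e)=\min\bigl\{\rho(w,w^{\theta}) : \text{\(w\) the reduced form of \(f\), \(\theta\) a match on \(w\)}\bigr\},
\]
from which positivity and the extension property are immediate. So there is no paper proof to compare against; the question is whether your sketch stands on its own.

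Your treatment of symmetry, the (ultra)triangle inequality, and two-sided invariance is correct and standard. The gap is in the positivity/extension argument. You form \(W=u_{1}\,u_{2}^{-1}\) and invoke the non-crossing matching \(\theta_{1}\) coming from free reduction of \(W\). That matching tells you which letters of \(W\) cancel, and indeed the unmatched positions spell the reduced form of \(f_{1}f_{2}^{-1}\). But the quantity you need to bound from below is \(\rho(u_{1},u_{2})\), which in terms of \(W\) equals \(\rho(W,W^{\theta_{0}})\) for the \emph{other} match \(\theta_{0}\) that pairs position \(i\) with \(2n+1-i\). Your sketch does not explain how information about \(\theta_{1}\) controls \(\rho(W,W^{\theta_{0}})\); the phrase ``iterate the triangle inequality along the matching'' does not by itself bridge this. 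Concretely, matched pairs under \(\theta_{1}\) contribute zero to \(\rho(W,W^{\theta_{1}})\), so \(\theta_{1}\) gives you an \emph{upper} bound on \(\delta(f_{1}f_{2}^{-1},e)\), not the lower bound you want.

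The missing step is the one encapsulated in Theorem~\ref{thm:computaion-of-Graev-metrics}: reduce first to \(f_{2}=e\) by invariance, then use that any trivial word \(v\in\word{\invx}\) admits a match \(\mu\) with \(v^{\mu}=v\) (pair cancelling letters, fix the \(e\)'s). For such a \(\mu\) one checks, pair by pair via the triangle inequality in \(\invx\), that \(\rho(u,u^{\mu})\le\rho(u,v)\). This replaces an arbitrary trivial companion \(v\) by one of the special form \(u^{\mu}\). A second reduction then shortens \(u\) to the reduced form of \(f\), carrying \(\mu\) along. Only after both reductions do you get a minimum over finitely many matches on a fixed word, which is manifestly positive when \(f\ne e\) and equals \(d(x,e)\) when \(f=x\in\invx\). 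Your outline gestures at the combinatorics but skips precisely this two-step reduction, which is where the work lies.
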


In order to describe an explicit formula for Graev metrics we need the notion of a match.  Let \( F \) be a finite set
of natural numbers \( F = \{i_{k}\}_{k=1}^{n} \), \( i_{1} < i_{2} < \cdots < i_{n} \).  A \emph{match\/} on \( F \) is a
bijection \( \theta : F \to F \) such that \( \theta \big( \theta (i) \big) = i \) for all \( i \in F \), and there are
no \( k < l \) such that \( \theta(i_{k}) = i_{p} \), \( \theta(i_{l}) = i_{q} \) and \( k < l < p < q \).  In other
words we can think of a match as a set of arcs connecting elements of \( F \) such that two arcs are either disjoint, or
one of the arcs is contained in the other one, see Figure \ref{fig:match}.  We shall sometimes say that \( \theta \) is
a match on a word \( w \) meaning that \( \theta \) is a match on its letters, which we then identify with the set
\( \{1, \ldots, |w| \} \).  If \( \theta \) is a match on \( F \) and \( \{j_{1}, \ldots, j_{m}\} \subseteq F \) such
that \( j_{1} < j_{2} < \cdots < j_{m} \), and \( \theta(\{j_{1}, \ldots, j_{m}\}) = \{j_{1}, \ldots, j_{m}\} \), then
the \emph{restriction\/} of \( \theta \) onto the set \( \{j_{1}, \ldots, j_{m}\} \) is a match on
\( \{j_{1}, \ldots, j_{m}\} \), but we shall abuse the terminology and say that \( \theta \) itself is a match on
\( \{j_{1}, \ldots, j_{m}\} \) in this case.

\begin{figure}[htb]
  \includegraphics{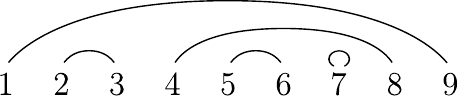}
  \caption{An example of a match on the set \( \{1, \ldots, 9\} \).}
  \label{fig:match}
\end{figure}

\noindent If \( w \in \word{\invx} \) and \( \theta \) is a match on \( w \), the word \( w^{\theta} \) is defined by
\begin{displaymath}
  w^{\theta}(i) =
  \begin{cases}
    w(i) & \textrm{if \( \theta(i) > i \)},\\
    e & \textrm{if \( \theta(i) = i \)},\\
    w\bigl( \theta(i) \bigr)^{-1} & \textrm{if \( \theta(i) < i \)}.\\
  \end{cases}
\end{displaymath}
Is is straightforward to check that \( \hat{w}^{\theta} = e \) for any \( w \) and any \( \theta \).  For example, if \(
\theta \) is the match in Figure \ref{fig:match}, then for the following word \( w \) we have
\begin{displaymath}
  \begin{array}{l c l@{\hspace{0.5cm}} l@{\hspace{0.5cm}} l@{\hspace{0.5cm}} l@{\hspace{0.5cm}} l@{\hspace{0.5cm}}
      l@{\hspace{0.5cm}} c@{\hspace{0.5cm}} l@{\hspace{0.5cm}} l}
    w & =        & x_{1} & x_{2} & x_{3}     & x_{4} & x_{5}& x_{6}     & x_{7} & x_{8} & x_{9}\\[2mm]
    w^{\theta} & = & x_{1} & x_{2} & x_{2}^{-1} & x_{4} & x_{5}& x_{5}^{-1} & e & x_{4}^{-1} & x_{1}^{-1}\\
  \end{array}
\end{displaymath}

\begin{theorem}[Sipacheva--Uspenskij \cite{MR913066}, Ding--Gao \cite{MR2332614}, Gao \cite{MR3028617}]
  \label{thm:computaion-of-Graev-metrics}
  Let \( \delta \) be the Graev (ultra)metric on the free group \( F(X) \).  For any \( f \in F(X) \)
  \[ \delta(f,e) = \min \bigl\{\, \rho\bigl(w,w^{\theta} \bigl) \ \bigm| \textrm{\( w \) is the reduced form of \( f \),
    \( \theta \) is a match on \( w \)} \,\bigr\}. \]
\end{theorem}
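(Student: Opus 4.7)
The inequality \( \delta(f, e) \le \rho(w, w^{\theta}) \) for any match \( \theta \) on the reduced form \( w \) of \( f \) is immediate from the definition of \( \delta \): the pair \( (w, w^{\theta}) \) has equal length, \( \hat{w} = f \), and \( w^{\theta} \) evaluates to \( e \), so it is admissible in the infimum defining \( \delta(f,e) \).  The content of the theorem is therefore the reverse inequality together with attainment of the infimum.  My plan is to show that any admissible pair \( (u_{1}, u_{2}) \) can be replaced, without increasing \( \rho \), by a pair of the form \( (w, w^{\theta}) \) with \( \theta \) a match on the (unique) reduced form \( w \) of \( f \).  This is done in two steps.

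First I convert the reference word \( u_{2} \) into a match on \( u_{1} \).  Since \( u_{2} \) evaluates to \( e \), its free-group reduction pairs the letters of \( u_{2} \) into formal-inverse pairs (with any letters equal to \( e \) treated as fixed points), and the resulting pairing is a canonical match \( \theta \) on \( [1, n] \), where \( n = |u_{1}| = |u_{2}| \), satisfying \( u_{2}(\theta(i)) = u_{2}(i)^{-1} \) for every \( i \).  Applying the same \( \theta \) to \( u_{1} \) yields a word \( u_{1}^{\theta} \) that automatically evaluates to \( e \).  To see \( \rho(u_{1}, u_{1}^{\theta}) \le \rho(u_{1}, u_{2}) \), one compares term by term: for each matched pair \( i < j = \theta(i) \), combining \( u_{2}(j) = u_{2}(i)^{-1} \) with the isometry \( x \mapsto x^{-1} \) and the (strong) triangle inequality gives
\[ d\bigl(u_{1}(j), u_{1}(i)^{-1}\bigr) \le d\bigl(u_{1}(j), u_{2}(j)\bigr) + d\bigl(u_{1}(i), u_{2}(i)\bigr) \]
in the metric case, and with \( + \) replaced by \( \max \) in the ultrametric case; fixed points of \( \theta \) force \( u_{2}(i) = e \), so their contribution \( d(u_{1}(i), e) \) equals \( d(u_{1}(i), u_{2}(i)) \) exactly.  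Summing (or taking the maximum) over all orbits of \( \theta \) yields the bound.

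In the second step I reduce \( u_{1} \) to its reduced form \( w \) while simultaneously updating the match, never increasing \( \rho \).  By induction on \( |u_{1}| \) it suffices to perform a single elementary cancellation \( u_{1}(i) u_{1}(i+1) = e \).  If \( \theta(i) = i + 1 \), the two positions contribute \( 0 \) to \( \rho(u_{1}, u_{1}^{\theta}) \) and restricting \( \theta \) to the remaining positions is already a match on the shorter word.  Otherwise I reroute \( \theta \) by pairing \( \theta(i) \) with \( \theta(i+1) \), declaring either one a fixed point if it happens to coincide with a deleted index.  Because the two deleted positions are adjacent, the non-crossing property of the new pairing is preserved in every subcase; the cost comparison for the reassigned positions uses \( u_{1}(i+1) = u_{1}(i)^{-1} \) together with the (strong) triangle inequality.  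After finitely many such cancellations \( u_{1} \) becomes \( w \) and \( \theta \) becomes a match \( \theta' \) on \( w \) with \( \rho(w, w^{\theta'}) \le \rho(u_{1}, u_{1}^{\theta}) \).

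Chaining the two steps gives \( \delta(f,e) \le \rho(w, w^{\theta'}) \le \rho(u_{1}, u_{2}) \), whence \( \delta(f, e) \ge \min_{\theta} \rho(w, w^{\theta}) \); the infimum really is a minimum because the set of matches on the finite word \( w \) is finite.  I expect the main technical obstacle to be the rerouting step: one has to verify in each subcase (depending on the positions of \( \theta(i) \) and \( \theta(i+1) \) relative to the deleted pair, and on whether either is a fixed point) that the new pairing remains a match and that the local cost does not grow.  The ultrametric case runs in parallel to the metric one and is in fact marginally simpler, since the strong triangle inequality does not accumulate across summands.
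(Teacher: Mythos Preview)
The paper does not contain a proof of this theorem: it is quoted as background from the cited references (Sipacheva--Uspenskij, Ding--Gao, Gao) and stated without argument in Section~\ref{sec:scal-gener-graev}. There is therefore nothing in the paper to compare your proposal against.

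That said, your outline is essentially the standard argument found in those references and is correct in spirit. Two small points are worth tightening. First, in Step~1 you speak of ``the'' canonical match coming from reducing \( u_{2} \); in fact the match need not be unique, but any match \( \theta \) with \( u_{2}(\theta(i)) = u_{2}(i)^{-1} \) (and \( u_{2}(i) = e \) at fixed points) will do, and such a match exists precisely because a word in \( \word{\invx} \) evaluating to \( e \) admits a non-crossing pairing of its non-\( e \) letters into inverse pairs. Second, in Step~2 you treat only the cancellation \( u_{1}(i)\,u_{1}(i+1) = e \) with \( u_{1}(i+1) = u_{1}(i)^{-1} \); to reach the reduced form of \( f \) in \( F(X) \) you must also delete isolated occurrences of the letter \( e \). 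This extra case is easier than the one you handle (make \( \theta(i) \)'s partner a fixed point if \( \theta(i) \ne i \), and observe that the contribution at the deleted position is zero), but it should be mentioned. The verification that rerouting preserves the non-crossing property is, as you anticipate, a short case analysis using that \( i \) and \( i+1 \) are adjacent; it goes through without difficulty.
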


\medskip

In order to explain the generalized Graev metrics we need yet another tool.
\begin{definition}
  \label{def:scale}
  A \emph{scale\/} on a pointed set \( (X,e) \) is a function
  \( \Gamma : X\times \mathbb{R}^{+} \to \mathbb{R}^{+} \) satisfying for all \( x \in X \) and all
  \( r \in \mathbb{R}^{+} \)
  \begin{enumerate}[(i)]
  \item\label{item:smallest-at-identity} \( \Gamma(e,r) = r \), \( \Gamma(x,r) \ge r \);
  \item \( \Gamma(x,r) = 0 \) if and only if \( r = 0 \);
  \item\label{item:monotonicity} \( \Gamma(x, \cdot) \) is a monotone increasing function with respect to the second
    variable;
  \item \( \lim\limits_{r \to 0}\Gamma(x,r) = 0 \).
  \end{enumerate}
\end{definition}

By a \emph{scaled (ultra)metric space}, or for brevity just a \emph{scaled space}, we mean a tuple
\( (\invx, d, e, \Gamma) \), where \( \invx \) is obtained from some pointed (ultra)metric space \( X \) by adding
formal inverses and \( \Gamma \) is a scale on \( \invx \).  We shall denote scaled spaces with bold letters
\( \mathbf{X}\), \( \mathbf{Y} \), etc.  Let \( \mathbf{X} = (\invx, d, e, \Gamma) \) be a scaled space.  Following
\cite{MR2278689} and \cite{MR3028617}, for a match \( \theta \) on a word \( w \in \word{\invx} \) we define the number
\( \norm[\theta][\Gamma]{w} = \norm[\theta]{w} \) by induction on the length of \( w \) as follows.
\begin{enumerate}[(i)]
\item If \( w = x \) for some \( x \in \invx \), then \( \norm[\theta]{w} = d(x,e) \); if \( w = x_{1}x_{2} \) and
  \( \theta(x_{1}) = x_{2} \), then \( \norm[\theta]{w} = d(x_{1}, x_{2}^{-1}) \).
\item If \( \theta(x_{1}) = x_{k} \) and \( k < |w| \), then \( w = u_{1}u_{2} \) for some words
  \( u_{i} \in \word{\invx} \) with \( |u_{1}| = k \), \( |u_{2}| = n - k\), \( \theta \) is a match on both
  \( u_{1} \) and \( u_{2} \), and we set
  \begin{displaymath}
    \norm[\theta]{w} =
    \begin{cases}
      \norm[\theta]{u_{1}} + \norm[\theta]{u_{2}} & \textrm{in the metric case},\\
      \max\bigl\{ \norm[\theta]{u_{1}}, \norm[\theta]{u_{2}} \bigr\} & \textrm{in the ultrametric case}.\\
    \end{cases}
  \end{displaymath}
\item If \( \theta(x_{1}) = x_{n} \), \( n = |w| \), then let \( w = x_{1} u x_{n} \) for \( u \in \word{\invx} \),
  \( x_{1}, x_{n} \in \invx \), \( \theta \) is a match on \( u \) and we set
  \begin{displaymath}
    \norm[\theta]{w} =
    \begin{cases}
      d(x_{1},x_{n}^{-1}) + \min\Bigl\{\Gamma\Bigl(x_{1}^{-1}, \norm[\theta]{u} \Bigr),\
      \Gamma\Bigl(x_{n}, \norm[\theta]{u} \Bigr)\Bigr\} &  \textrm{in the metric case},\\
      \max\biggl\{ d\bigl( x_{1},x_{n}^{-1} \bigr), \min\Bigl \{\Gamma\Bigl( x_{1}^{-1}, \norm[\theta]{u} \Bigr),\
  \Gamma\Bigl(x_{n}, \norm[\theta]{u} \Bigr)\Bigr\} \biggr\} &  \textrm{in the ultrametric case}.\\
    \end{cases}
  \end{displaymath}
\end{enumerate}
\noindent The \emph{Graev (ultra)norm \( \norm[][\Gamma]{} = \norm{} \) of the scale \( \Gamma \)}  is
defined by
\begin{displaymath}
  \begin{aligned}
    \norm{f} &= \inf \bigl\{\, \norm[\theta]{w} \bigm| w \in \word{\invx},\ \hat{w} = f\ \textrm{and
      \( \theta \) is a
      match on \( w \)} \,\bigr\}.\\
  \end{aligned}
\end{displaymath}

\begin{proposition}[Ding--Gao~\cite{MR2278689}, Gao~\cite{MR3028617}]
  \label{thm:generalized-graev-metric}
  Let \( (X, d, e) \) be a pointed (ultra)metric space.  The function \( f \mapsto \norm{f} \) is an (ultra)norm on the
  group \( F(X) \), and the latter is a topological group in the topology of \( \norm{} \).  The natural inclusion map
  \( \invx \hookrightarrow F(X) \) is an isometry.
\end{proposition}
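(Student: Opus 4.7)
The plan is to verify the four norm axioms listed after Proposition~1.2 (all three general ones, plus the strict positivity clause when $d$ is a metric), then the isometric embedding, and finally continuity of the group operations. The central tool is the recursive structure of $\norm[\theta]{w}$: induction on word length allows us to split, concatenate, or invert words while tracking what happens to the associated matches.

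\textbf{Symmetry and subadditivity.} For symmetry $\norm{f^{-1}}=\norm{f}$, given $w=x_{1}\cdots x_{n}$ and a match $\theta$, consider the \emph{reverse–invert} $w^{*}=x_{n}^{-1}\cdots x_{1}^{-1}$ and the reflected match $\theta^{*}(i)=n+1-\theta(n+1-i)$. Then $\widehat{w^{*}}=\hat{w}^{-1}$, and a short induction using $d(a^{-1},b^{-1})=d(a,b)$ on $\invx$ and the built-in symmetry of the $\min$ in the outer-arc case gives $\norm[\theta^{*}]{w^{*}}=\norm[\theta]{w}$. For the triangle inequality (or its ultrametric strengthening), given $\varepsilon$-optimal pairs $(w_{1},\theta_{1})$, $(w_{2},\theta_{2})$ for $f_{1},f_{2}$, form the concatenation $w_{1}w_{2}$ with the disjoint-union match $\theta_{1}\sqcup\theta_{2}$. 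The key lemma, again by induction on length, is
\[
\norm[\theta_{1}\sqcup\theta_{2}]{w_{1}w_{2}}=\norm[\theta_{1}]{w_{1}}+\norm[\theta_{2}]{w_{2}}\qquad(\text{or }\max\text{ in the ultrametric case}),
\]
because no arc of $\theta_{1}\sqcup\theta_{2}$ crosses the boundary and so the recursion peels off $w_{1}$ completely before touching $w_{2}$. Since $\widehat{w_{1}w_{2}}=f_{1}f_{2}$, the desired bound on $\norm{f_{1}f_{2}}$ follows.

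\textbf{Positivity and the isometric embedding.} Axiom~(i) of a scale gives $\Gamma(x,r)\ge r$, so comparing the recursive definition of $\norm[\theta]{w}$ for a general $\Gamma$ with the specialization $\Gamma_{0}(x,r)=r$ shows $\norm[\theta][\Gamma]{w}\ge\norm[\theta][\Gamma_{0}]{w}$. A direct unwinding of the recursion identifies $\norm[\theta][\Gamma_{0}]{w}$ with $\rho(w,w^{\theta})$, so by Theorem~\ref{thm:computaion-of-Graev-metrics} we obtain $\norm[][\Gamma]{f}\ge\delta(f,e)$, where $\delta$ is the classical Graev (ultra)metric. When $d$ is a metric, $\delta(f,e)>0$ for $f\ne e$ by Theorem~\ref{thm:graev-metric}, giving positive-definiteness. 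The same comparison yields the embedding: for $x\in\invx$, the length-one word with the trivial match gives $\norm{x}\le d(x,e)$, while $\norm{x}\ge\delta(x,e)=d(x,e)$ because $\delta$ extends $d$. Combined with left invariance of the associated metric $d_{\norm{}}(f,g)=\norm{f^{-1}g}$, this is the isometric inclusion $\invx\hookrightarrow F(X)$.

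\textbf{Topological group.} Inversion is an isometry by symmetry, so only continuity of multiplication at the identity in each variable needs discussion; by left invariance this reduces to showing that conjugation $f\mapsto gfg^{-1}$ is continuous at $e$ for each fixed $g$. Write $g=\hat{u}$ for some $u=y_{1}\cdots y_{m}$, and for a given $f$ with $\norm{f}$ small pick $(v,\sigma)$ with $\hat{v}=f$ and $\norm[\sigma]{v}$ close to $\norm{f}$. Consider the word $u\,v\,u^{*}$, where $u^{*}=y_{m}^{-1}\cdots y_{1}^{-1}$, equipped with the match $\tau$ that pairs the $i$-th letter of $u$ with its mirror in $u^{*}$ (producing the nested arc configuration) and acts as $\sigma$ on $v$. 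Then $\widehat{u\,v\,u^{*}}=gfg^{-1}$, and peeling off the nested arcs using the outer-arc clause of the recursion and always selecting the $\Gamma(x_{1}^{-1},\cdot)$ side of the $\min$, one obtains a bound of the form $\Gamma(y_{1}^{-1},\Gamma(y_{2}^{-1},\ldots,\Gamma(y_{m}^{-1},\norm[\sigma]{v})\ldots))$ (in the ultrametric case, the $d(y_{i},y_{i})=0$ terms are absorbed by the $\max$). Axiom~(iv) of a scale, $\lim_{r\to 0}\Gamma(x,r)=0$, applied $m$ times, forces this quantity to $0$ as $\norm{f}\to 0$. The \emph{main obstacle} is exactly this last step: one must check that the nested choice of $\min$-sides is legitimate and that the bound, while dependent on $g$, shrinks uniformly with $\norm{f}$—this is where the full strength of the scale axioms (monotonicity plus continuity at $0$) is essential and where the argument is most delicate.
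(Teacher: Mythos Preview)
The paper does not supply its own proof of this proposition: it is stated with attribution to Ding--Gao and Gao and then used as a black box, so there is no ``paper's proof'' to compare against. What follows is an assessment of your argument on its own terms.

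Your overall strategy is the standard one and is sound. The symmetry argument via the reverse--invert word and reflected match is correct (in the outer-arc step the two arguments of the \(\min\) swap, so the induction goes through). The subadditivity argument via concatenation with the disjoint-union match is correct: since \(\theta_{1}(1)\le|w_{1}|<|w_{1}w_{2}|\), the recursion always falls into case (ii) at the boundary, and a short induction on \(|w_{1}|\) gives the claimed additivity (respectively \(\max\)). Your positivity argument is clean: the pointwise inequality \(\Gamma(x,r)\ge r=\Gamma_{0}(x,r)\) plus the identification \(\norm[\theta][\Gamma_{0}]{w}=\rho(w,w^{\theta})\) yields \(\norm[][\Gamma]{f}\ge\delta(f,e)\), and the classical Graev metric handles the rest. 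The topological-group step is also fine; the nested \(\Gamma(y_{i}^{-1},\cdot)\) bound is exactly the intended use of scale axiom (iv), and for a left-invariant metric, continuity of each conjugation map at \(e\) together with subadditivity is enough to get joint continuity of multiplication and of inversion.

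There is one genuine (if small) gap in the isometric-embedding paragraph. You establish \(\norm{x}=d(x,e)\) for \(x\in\invx\) and then write ``combined with left invariance \ldots\ this is the isometric inclusion.'' Left invariance of \(d_{\norm{}}\) does \emph{not} reduce \(d_{\norm{}}(x,y)=\norm{x^{-1}y}\) to a single-letter norm, because \(x^{-1}y\) is typically a length-two element of \(F(X)\), not an element of \(\invx\). What you need is the two-letter computation: for the word \(w=x^{-1}y\) with the match pairing the two letters, \(\norm[\theta]{w}=d(x^{-1},y^{-1})=d(x,y)\), giving \(\norm{x^{-1}y}\le d(x,y)\); the lower bound \(\norm{x^{-1}y}\ge\delta(x^{-1}y,e)=\delta(y,x)=d(x,y)\) you already have from your comparison with the classical Graev metric (using that \(\delta\) is two-sided invariant and extends \(d\)). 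With this one extra line the embedding claim is complete.
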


We denote by \( F(\mathbf{X}) \) the free group \( F(X) \) together with the Graev norm \( \norm{} \) and view
\( \invx \) as a subset of \( F(\mathbf{X}) \).

A \emph{canonical scale\/} on an (ultra)metric group \( (G,d\,) \) is a map
\( \cansc{G} : G \times \mathbb{R}^{+} \to \mathbb{R}^{+} \) defined by
\[ \cansc{G}(g,r) = \max \Bigl\{ r,\ \sup\bigl\{d(g^{-1} h g,e) \, \bigm|\, h \in G,\ d(h,e) \le r \bigr\}\Bigr\}.  \]
Let \( \mathbf{X} = (\invx, d, e, \Gamma) \) be a scaled space, \( (G,d_{G}) \) be an (ultra)metric group, and
\( \Gamma_{G} \) be a scale on \( G \).  A map \( \phi : \invx \to G \) is called a \emph{Lipschitz morphism with
  respect to the scale \( \Gamma_{G} \)} if for all \( x, y \in \invx \) and all \( r \in \mathbb{R}^{+} \)
\begin{enumerate}[(i)]
\item \( \phi(e) = e \);
\item \( \phi(x^{-1}) = \phi(x)^{-1} \);
\item \( d_{G}\big(\phi(x),\phi(y)\big) \le d(x,y) \);
\item \( \Gamma_{G}(\phi(x),r) \le \Gamma(x,r) \).
\end{enumerate}
We say that \( \phi \) is a Lipschitz morphism if it is a Lipschitz morphism with respect to the canonical
scale \( \cansc{G} \).  In general we shall use the term Lipschitz to mean \( 1 \)-Lipschitz.

\begin{proposition}[Ding--Gao \cite{MR2278689}, \cite{MR3028617}]
  \label{thm:exntesion-of-lipschitz-morphisms}
  Let \( \phi \) be a Lipschitz morphism from a scaled space \( \mathbf{X} \) into an (ultra)metric group \( G \).  The
  map \( \phi \) extends to a Lipschitz homomorphism \( \phi : F(\mathbf{X}) \to G \).  If \( G \) is completely
  metrizable, then \( \phi \) can be further extended to a continuous homomorphism
  \( \phi : \clF(\mathbf{X}) \to G \).
\end{proposition}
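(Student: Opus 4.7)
The plan is to first extend \( \phi \) to a group homomorphism on \( F(X) \) via the universal property of the free group, and then show that this extension is \( 1 \)-Lipschitz with respect to the Graev (ultra)norm. The restriction of \( \phi \) to \( X \setminus \{e\} \) induces a unique group homomorphism \( \phi : F(X) \to G \), and the axioms \( \phi(e) = e \) and \( \phi(x^{-1}) = \phi(x)^{-1} \) ensure that this extension agrees with the original map on all of \( \invx \subset F(X) \). By left invariance of the norms on both \( F(\mathbf{X}) \) and \( G \), together with the fact that \( \phi \) is a homomorphism, Lipschitz-ness reduces to \( d_G(\phi(f), e) \le \norm{f} \) for every \( f \in F(X) \), and by the defining infimum formula for \( \norm{f} \) to the single estimate
\[ d_G(\phi(\hat w), e) \le \norm[\theta]{w} \quad \text{for every } w \in \word{\invx} \text{ and every match } \theta \text{ on } w. \]

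I would prove this key estimate by induction on \( |w| \), mirroring the three cases of the recursive definition of \( \norm[\theta]{w} \). The base cases---\( |w| = 1 \) with \( \norm[\theta]{w} = d(w, e) \), and \( w = x_1 x_2 \) with \( \theta \) pairing the two letters and \( \norm[\theta]{w} = d(x_1, x_2^{-1}) \)---are immediate from the Lipschitz condition \( d_G(\phi(x), \phi(y)) \le d(x, y) \) applied to the pairs \( (x, e) \) and \( (x_1, x_2^{-1}) \) respectively. The splitting case \( \theta(x_1) = x_k \) with \( 1 < k < |w| \) decomposes \( w = u_1 u_2 \) with \( \theta \) restricting to a match on each factor, so that \( \phi(\hat w) = \phi(\hat u_1)\phi(\hat u_2) \) as a product in \( G \); the claim then follows from the (ultra)metric triangle inequality for norms in \( G \) combined with the inductive hypothesis applied separately to \( u_1 \) and \( u_2 \).

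The heart of the proof is the conjugation case \( \theta(x_1) = x_n \), \( w = x_1 u x_n \). Here I would exploit the pair of symmetric algebraic decompositions
\[ \phi(x_1)\phi(\hat u)\phi(x_n) \;=\; \bigl[\phi(x_1)\phi(\hat u)\phi(x_1)^{-1}\bigr]\bigl[\phi(x_1)\phi(x_n)\bigr] \;=\; \bigl[\phi(x_1)\phi(x_n)\bigr]\bigl[\phi(x_n)^{-1}\phi(\hat u)\phi(x_n)\bigr]. \]
The factor \( \phi(x_1)\phi(x_n) \) is controlled by \( d(x_1, x_n^{-1}) \) as in the base case. Each conjugated factor, say \( \phi(x_1)\phi(\hat u)\phi(x_1)^{-1} \), is bounded using the defining property of the canonical scale by \( \cansc{G}(\phi(x_1^{-1}), d_G(\phi(\hat u), e)) \), and analogously for the other decomposition by \( \cansc{G}(\phi(x_n), d_G(\phi(\hat u), e)) \). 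Chaining monotonicity of the scale with the inductive bound \( d_G(\phi(\hat u), e) \le \norm[\theta]{u} \), then the scale-domination hypothesis \( \cansc{G}(\phi(y), r) \le \Gamma(y, r) \), and finally the (ultra)metric triangle inequality for norms in \( G \), yields precisely the two expressions whose minimum (in the metric case, respectively maximum of \( d(x_1, x_n^{-1}) \) and this minimum in the ultrametric case) defines \( \norm[\theta]{w} \), closing the induction.

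For the extension to \( \clF(\mathbf{X}) \), once \( \phi : F(\mathbf{X}) \to G \) is known to be \( 1 \)-Lipschitz, the identity \( \phi(f^{-1}) = \phi(f)^{-1} \) makes it \( 1 \)-Lipschitz with respect to the symmetrized two-sided metric \( D \) used to define the group completion, and completeness of \( G \) then gives a unique continuous homomorphic extension \( \clF(\mathbf{X}) \to G \). I expect the main obstacle to be the algebraic bookkeeping in the conjugation case---in particular, correctly pairing the two symmetric group-theoretic decompositions with the two arguments of the \( \min \) in the definition of \( \norm[\theta]{w} \), and composing monotonicity of the scale, the inductive estimate, and the scale-domination hypothesis without losing control of any of the quantities involved.
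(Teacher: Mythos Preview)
The paper does not include a proof of this proposition; it is quoted from the cited references of Ding--Gao and Gao, so there is no in-paper argument to compare against. Your proposed proof is correct and is essentially the standard argument: the inductive verification of \( d_{G}\bigl(\phi(\hat w),e\bigr) \le \norm[\theta]{w} \) along the recursive definition of \( \norm[\theta]{w} \), with the conjugation step handled via the two decompositions you wrote down together with the defining property of the canonical scale, monotonicity, and the scale-domination hypothesis. One small clarification for the completion step: the statement only asserts a \emph{continuous} extension to \( \clF(\mathbf{X}) \), not a Lipschitz one, and your argument via the symmetrized metric \( D \) gives exactly that (uniform continuity for the two-sided uniformity plus completeness of \( G \)); just be sure you are invoking a compatible complete metric on \( G \) rather than the original \( d_{G} \), which need not be complete.
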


\section{Graev ultrametrics on free products}
\label{sec:graev-ultr-free}

Let \( (G,d_{G}) \) and \( (H,d_{H}) \) be ultrametric groups with two-sided invariant metrics, let \( A = G \cap H \)
be a common \emph{closed\/} subgroup; we assume that the metrics agree: \( d_{G}(a_{1}, a_{2}) = d_{H}(a_{1}, a_{2}) \)
for all \( a_{1}, a_{2} \in A \).  We shall define a two-sided invariant ultrametric \( \delta \) on the amalgamated
free product \( G *_{A} H \).

The construction of Graev metrics on free products mimics that on the free groups.  To start, we have a natural
\emph{evaluation map}:
\[ \word{G \cup H} \ni w \mapsto \hat{w} \in G*_{A}H, \]
where \( \hat{w} \) is just the product of letters of \( w \).  Note that this map is surjective.  If
\( w \in \word{G \cup H} \) and \( F \subseteq \bigl[1, |w|\bigr] \), then the evaluation of the subword \( w(F) \) is
denoted by \( \hat{w}(F) \) (as opposed to \( \widehat{w(F)} \)).  Let \( d \) be the ultrametric on the amalgam \( G
\cup H \) of \( (G, d_{G}) \) and \( (H, d_{H}) \) over \( A \).
If \( u_{1}, u_{2} \in \word{G \cup H} \) are two words of the same length
\( |u_{1}| = n = |u_{2}| \), we define \( \rho(u_{1}, u_{2}) \) to be the maximum of distances between the corresponding
letters:
\[ \rho(u_{1}, u_{2}) = \max\limits_{i \le n} \bigl\{ d\bigl( u_{1}(i), u_{2}(i) \bigr)\bigr\}. \]
The \emph{Graev ultrametric} on the free product \( G*_{A}H \) is the function
\[ \delta(f_{1}, f_{2}) = \inf \bigl\{\, \rho(u_{1}, u_{2}) \,\bigm|\, \hat{u}_{i} = f_{i},\ |u_{1}| = |u_{2}| \,\bigr\}. \]
Our goal is to prove
\begin{theorem}
  \label{thm:Graev-free-prodcut-ultrametric}
  The function \( \delta \) is a two-sided invariant ultrametric on \( G*_{A}H \).  Moreover, \( \delta \) extends \( d
  \) on \( G \cup H \).
\end{theorem}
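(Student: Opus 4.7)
The plan is to dispatch the formal properties of $\delta$ by word manipulations, then reduce the two real issues — positive definiteness and the extension property — to an explicit formula for the norm $\|f\| := \delta(f, e)$ in the style of Theorem \ref{thm:computaion-of-Graev-metrics}.

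Symmetry of $\delta$ and $\delta(f,f)=0$ are immediate. Right invariance follows by appending a fixed letter $g$ to any admissible pair $(u_1, u_2)$: the pair $(u_1 g, u_2 g)$ is admissible for $(f_1 g, f_2 g)$ with the same $\rho$-value, and the reverse inequality uses $g^{-1}$; left invariance is symmetric. For the strong triangle inequality I would first prove the ultrametric subadditivity $\|f_1 f_2\| \le \max\{\|f_1\|, \|f_2\|\}$ by concatenating near-optimal witnesses $(u_i, u_i')$ for $\|f_i\|$: the pair $(u_1 u_2,\, u_1' u_2')$ is admissible for $(f_1 f_2, e)$ and has $\rho$-value $\max\{\rho(u_1, u_1'), \rho(u_2, u_2')\}$. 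Combined with $\|f\|=\|f^{-1}\|$ (from reversing and inverting a witness, an operation which preserves $\rho$ because $d_G, d_H$ are two-sided invariant) and the invariance already shown, this yields $\delta(f_1, f_3) = \|f_1 f_3^{-1}\| \le \max\{\|f_1 f_2^{-1}\|, \|f_2 f_3^{-1}\|\} = \max\{\delta(f_1, f_2), \delta(f_2, f_3)\}$. Inserting the letter $e \in A$ freely lets us pad words to equal lengths whenever required.

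The remaining content is the lower bound on $\|f\|$: positive definiteness ($\|f\|>0$ for $f \ne e$) and extension ($\|f\| = d(x, y)$ when $f = xy^{-1}$ for $x, y \in G \cup H$). The matching upper bound for $f = xy^{-1}$ with $x \in G$, $y \in H$ comes from the length-$2$ witness $(xy^{-1}, aa^{-1})$ and the infimum over $a \in A$; for $x,y$ in the same factor it comes from a length-$1$ witness. For the lower bound I expect the author to develop an explicit formula minimising over the announced \emph{maximal evaluation forests} on a companion word $w'$ with $\hat{w}'=e$ — a free-product analogue of the matches of Theorem \ref{thm:computaion-of-Graev-metrics} and a streamlined alternative to the evaluation trees of \cite{1111.1538}. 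Each tree in a forest should encode how a contiguous block of letters of $w'$ telescopes to $e$ inside $G$, inside $H$, or by passing through $A$; ultrametricity allows this combinatorial bookkeeping to proceed with $\max$ rather than sums, so nested reductions do not accumulate cost. Unwinding the forest produces a lower bound for $\rho(w, w')$ in terms of an intrinsic distance associated to the reduced form of $f$.

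The principal obstacle is precisely this lower-bound step: ensuring that every maximal forest can be chosen so that pairings crossing the $G/H$ boundary factor through $A$, so that $\rho(w, w')$ honestly bounds the appropriate intrinsic distance — $d_G$ on $G$, $d_H$ on $H$, or the amalgamated distance between a $G$-letter and an $H$-letter. Closedness of $A$, the compatibility $d_G|_A = d_H|_A$, and ultrametricity all enter essentially here: without the $A$-compatibility the target distance on $G \cup H$ is not even well defined, and ultrametricity is what prevents the costs of nested sub-trees from accumulating (the ``max'' version of Proposition \ref{prop:chracteristic-tsi-inequality-ultrametric} rather than the sum). Once this formula is in place, $\|f\|\ge d_G(f,e)$ for $f\in G$ (symmetrically for $H$) and $\|f\|>0$ for $f\ne e$ both follow, yielding simultaneously the extension property and positive definiteness.
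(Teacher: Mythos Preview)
Your treatment of the pseudo-ultrametric axioms and two-sided invariance is fine and matches the paper's Proposition~\ref{prop:delta-is-pseudo-ultrametric}, which is dismissed there as ``essentially obvious''. The upper bound for the extension property via length-$1$ and length-$2$ witnesses is also correct.

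The gap is in the lower bound, and it is not merely a matter of detail. You anticipate an \emph{explicit formula} for $\|f\|$ as a minimum over maximal evaluation forests, analogous to Theorem~\ref{thm:computaion-of-Graev-metrics}. The paper does not produce such a formula. Instead it runs a chain of reductions on $f$-pairs $(\alpha,\zeta)$, each step replacing the pair by one with no larger $\rho$-value: first to \emph{multipliable} pairs (each $\alpha(i)$ and $\zeta(i)$ from the same factor, Lemma~\ref{lem:reduction-to-congruent-pairs}); then, using a maximal evaluation forest for $\zeta$ and a \emph{transfer} operation that shifts elements of $A$ between adjacent letters, to \emph{simple} pairs in which $\hat\zeta(I_t)=e$ for every node $t$ (Lemma~\ref{lem:existence-of-simple-pair-with-same-rho}); then, via a \emph{symmetrization} step and a shortening argument, to \emph{reduced} pairs in which $\alpha$ is literally a reduced form of $f$ (Lemma~\ref{lem:norm--reduced-pairs}). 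Only at this last stage does the forest pay off: if $\alpha$ is reduced then consecutive letters of $\alpha$ are not multipliable, so at any leaf $t$ of the forest the multipliable word $\zeta(I_t)$ forces some $\zeta(j)\in A$, whence $\rho(\alpha,\zeta)\ge d(\alpha(j),A)\ge\epsilon>0$ by closedness of $A$. The extension property also drops out of the reduced-pair description, since reduced $f$-pairs for $f\in G\cup H$ or $f=gh^{-1}$ are completely explicit.

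Your outline names the right ingredients (forests, closedness of $A$, the $\max$-form of Proposition~\ref{prop:chracteristic-tsi-inequality-ultrametric}) but does not supply the mechanism that makes them bite: without the reduction to pairs where $\alpha$ is already reduced, a forest on $\zeta$ gives no control over $\rho(\alpha,\zeta)$, because nothing prevents $\alpha$ itself from being a long trivial-looking word. The transfer and symmetrization operations are the missing ideas, and ``unwinding the forest'' does not substitute for them.
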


As is typical for Graev metrics, it is straightforward to check that \( \delta \) is a two-sided invariant
pseudo-ultrametric.  The main difficulty is to show that distinct elements are never glued:
\( \delta(f_{1}, f_{2}) > 0 \) whenever \( f_{1} \ne f_{2} \).

Our arguments here are very similar to those in \cite{1111.1538}, and we shall outline the proofs and give references
for more details.  Essentially the proofs are repetitions of the proofs fore the metric case when the summation
operation is substituted with the operation of taking maximum, e.g., like in Proposition
\ref{prop:chracteristic-tsi-inequality-ultrametric}.  Another, more important difference is that in \cite{1111.1538} the
notion of an evaluation tree was used.  In our approach here we use instead the formalism of (maximal) evaluation
forests, which has the advantage of working for both the amalgams and HNN extensions in a uniform way.

The following proposition is essentially obvious.

\begin{proposition}[cf.~Lemma 5.1 \cite{1111.1538}]
  \label{prop:delta-is-pseudo-ultrametric}
  The function \( \delta \) is a two-sided invariant pseudo-ultrametric on \( G*_{A}H \).
\end{proposition}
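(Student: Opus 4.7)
The plan is to verify in order: reflexivity and symmetry of $\delta$, then two-sided invariance, and finally the ultrametric triangle inequality, following the standard Graev template with sums replaced by maxima, as in Lemma 5.1 of \cite{1111.1538}. I would find it convenient throughout to work with the derived quantity $\|f\| := \delta(f,e)$. Reflexivity $\delta(f,f) = 0$ and symmetry $\delta(f_1,f_2) = \delta(f_2,f_1)$ are immediate: any word $u \in \word{G \cup H}$ with $\hat{u} = f$ pairs with itself to witness $\delta(f,f) \le \rho(u,u) = 0$, and $\rho$ is symmetric.

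For two-sided invariance I would argue left invariance by a direct concatenation trick and deduce right invariance symmetrically. Fix $g \in G *_{A} H$ and choose any word $w_{g} \in \word{G \cup H}$ with $\hat{w}_{g} = g$ and $|w_{g}| = k$. Given words $u_{1}, u_{2}$ of common length $n$ with $\hat{u}_{i} = f_{i}$, the concatenations $w_{g} u_{1}$ and $w_{g} u_{2}$ have length $n+k$, evaluate to $g f_{1}$ and $g f_{2}$, and agree on their first $k$ letters, so $\rho(w_{g} u_{1}, w_{g} u_{2}) = \rho(u_{1}, u_{2})$. Taking infima gives $\delta(g f_{1}, g f_{2}) \le \delta(f_{1}, f_{2})$; replacing $g$ by $g^{-1}$ yields the reverse inequality and hence equality. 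The same argument with the word $u_{i} w_{g}$ gives right invariance.

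For the ultrametric triangle I would first establish the submax property $\|f_{1} f_{2}\| \le \max\{\|f_{1}\|,\|f_{2}\|\}$ on the norm. Given pairs $(u_{i}, v_{i})$ of equal-length words with $\hat{u}_{i} = f_{i}$ and $\hat{v}_{i} = e$ for $i = 1, 2$, the concatenated pair $(u_{1} u_{2},\, v_{1} v_{2})$ represents $(f_{1} f_{2},\, e)$ and satisfies
\[ \rho(u_{1} u_{2},\, v_{1} v_{2}) = \max\bigl\{ \rho(u_{1}, v_{1}),\, \rho(u_{2}, v_{2}) \bigr\}, \]
so taking infima yields the bound. Combining this with left invariance, which identifies $\delta(f_{1}, f_{3})$ with $\|f_{1}^{-1} f_{3}\|$, and factoring $f_{1}^{-1} f_{3} = (f_{1}^{-1} f_{2})(f_{2}^{-1} f_{3})$ gives $\delta(f_{1}, f_{3}) \le \max\{\delta(f_{1}, f_{2}), \delta(f_{2}, f_{3})\}$.

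I do not expect any genuine obstacle: the whole argument amounts to three concatenation tricks, and the only ancillary point to note is that every element of $G *_{A} H$ admits at least one representing word in $\word{G \cup H}$, so every infimum above is taken over a nonempty set and $\delta$ is finite-valued (pad representations with the letter $e$ to equalize lengths). This justifies the author's assessment that the statement is essentially obvious.
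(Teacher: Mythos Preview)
Your proposal is correct and complete. The paper itself gives no proof beyond the remark that the proposition is ``essentially obvious'' (with a reference to the metric analogue), so your verification---reflexivity and symmetry, two-sided invariance by prepending/appending a fixed word, and the strong triangle inequality via the submax property of the derived norm and the factorization $f_{1}^{-1}f_{3} = (f_{1}^{-1}f_{2})(f_{2}^{-1}f_{3})$---is exactly the standard Graev argument the author has in mind, and there is nothing to compare.
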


Let \( \norm{} : G*_{A} H \to \mathbb{R}^{+} \) be the pseudo-norm that corresponds to \( \delta \):
\( \norm{f} = \delta(f,e) \).  In order to show that \( \delta(f_{1}, f_{2}) > 0 \) for \( f_{1} \ne f_{2} \) it is
enough to show that \( \norm{} \) is a genuine norm: \( \norm{f} > 0 \) for \( f \ne e \).

A pair of words \( (\alpha, \zeta) \), \( \alpha, \zeta \in \word{G \cup H} \), is said to be an \emph{\( f \)-pair\/} if
\( |\alpha| = |\zeta| \), \( \hat{\zeta} = e \), and \( \hat{\alpha} = f \).  The definition of the function \( \norm{} \)
can then be reformulated as
\[ \norm{f} = \inf \bigl\{\, \rho(\alpha, \zeta) \bigm| \textrm{\( (\alpha, \zeta) \) is an \( f \)-pair} \,\bigr\}. \]
To get a better understanding of the function \( \norm{} \), we shall gradually add restrictions on the \( f \)-pairs
\( (\alpha, \zeta) \), while still keeping the equality above.

\subsection{Trivial words}
\label{sec:trivial-words}

Before going any further we need to understand the structure of trivial words in the amalgam \( G*_{A}H \).  A word
\( \zeta \in \word{G \cup H} \) is said to be \emph{trivial\/} if \( \hat{\zeta} = e \).

We say that two letters \( x, y \in G \cup H \) are \emph{multipliable\/} if they both come from the same group: either
\( x, y \in G \) or \( x, y \in H \).  We also say that a word \( w \in \word{G \cup H} \) is \emph{multipliable\/} if
all of its letters come from the same group.

\begin{definition}
  \label{def:tree}
  Let \( (\mathcal{T}, \preceq) \) be a poset, and let \( s,t \in \mathcal{T} \).  We say that \( s \) is an
  \emph{immediate predecessor\/} of \( t \) if \( s \prec t \) and for any \( s \preceq s' \preceq t \) either
  \( s' = s \) or \( s' = t \).  If \( s \) is an immediate predecessor of \( t \), then \( t \) is also said to be an
  \emph{immediate successor\/} of \( s \).  A \emph{finite rooted tree}, or just a \emph{tree}, is a finite poset
  \( (\mathcal{T}, \preceq) \) with a distinguished element \( \vn \), called the \emph{root}, such that for any \( t
  \in \mathcal{T} \)
  \begin{itemize}
  \item \( t \preceq \vn \), i.e., the root \( \vn \) is the largest element;
  \item \( \{\, s \in \mathcal{T} \mid t \preceq s \,\}\) is linearly ordered.
  \end{itemize}
  There is a natural graph structure on \( \mathcal{T} \): we put an edge between \( s \) and \( t \) whenever \( s \)
  is an immediate predecessor of \( t \) or \( t \) is an immediate predecessor of \( s \).  With this assignment of
  edges \( \mathcal{T} \) is a rooted tree in the sense of the graph theory.  In a tree \( \mathcal{T} \), any element
  \( t \in \mathcal{T} \), except for the root, has a unique immediate successor, which we denote by \( t^{+} \).
  A \emph{leaf\/} in a tree is an element without predecessors.
  
  A \emph{finite forest\/} is a finite poset \( (\mathcal{F}, \preceq) \) which is a disjoint union of rooted trees
  \( \mathcal{F} = \sqcup_{i=1}^{r} \mathcal{T}_{i} \), where two elements \( t, s \in \mathcal{F} \) are comparable if
  and only if they belong to the same tree.  The root of the tree \( \mathcal{T} \) is denoted by \( \vn(\mathcal{T})
  \) and \( \roots{\mathcal{F}} \) denotes the set of
  roots of trees in \( \mathcal{F} \):
  \[ \roots{\mathcal{F}} = \bigl\{\, \vn(\mathcal{T}) \bigm| \textrm{\( \mathcal{T} \) is a tree in \( \mathcal{F} \)}
  \, \bigr\}. \]

  An \emph{evaluation forest\/} on an interval \( [1,n] \) is a forest \( \mathcal{F} \) together with an assignment \(
  t \mapsto I_{t} \subseteq [1,n] \) such that for all \( t, s \in \mathcal{F} \)
  \begin{enumerate}[(i)]
  \item \( I_{t} \) is a non-empty subinterval of \( [1,n] \);
  \item \( [1,n] = \sqcup_{\mathcal{T} }\, I_{\vn(\mathcal{T})} \), where the union is taken over all trees \( \mathcal{T}
    \) in \( \mathcal{F} \);
  \item \( I_{s} \cap I_{t} \ne \vn \) if and only if \( s \) and \( t \) are comparable in \( \mathcal{F} \);
  \item \( s \preceq t \) if and only if \( I_{s} \subseteq I_{t} \);
  \item\label{item:strict-containment} if \( s \prec t \), then \( m(I_{t}) < m(I_{s}) \le M(I_{s}) < M(I_{t}) \), and in
    particular \( I_{s} \subset I_{t} \);
  \end{enumerate}

  Let \( \zeta \in \word{G \cup H} \) be a word of length \( n \) such that \( \hat{\zeta} \in A \) and let
  \( \mathcal{F} \) be an evaluation forest on \( [1,n] \).  We say that \( \mathcal{F} \) is an \emph{evaluation forest
    for \( \zeta \)} if additionally for all \( t \in \mathcal{F} \)
  \begin{enumerate}[(i)]
  \setcounter{enumi}{5}
  \item \( \hat{\zeta}(I_{t}) \in A \);
  \item \( \zeta(R_{t}) \) is multipliable, where \( R_{t} = I_{t} \setminus \bigcup_{s \prec t} I_{s} \); the set
    \( R_{t} \) is called the \emph{reminder\/} of the interval \( I_{t} \).
  \end{enumerate}

  We say that an interval \( I \subseteq [1,n] \) is \emph{decomposable\/} (relative to \( \zeta \)) if one can write
  \( I \) as a disjoint union of non-trivial subintervals \( I = J_{1} \sqcup J_{2} \) with \( \hat{\zeta}(J_{1}) \in A \)
  and \( \hat{\zeta}(J_{2}) \in A \); otherwise we say that \( I \) is \emph{indecomposable\/} (relative to \( \zeta \)).

  Let \( \mathcal{F} \) be an evaluation forest for \( \zeta \).  We say that \( \mathcal{F} \) is \emph{maximal} if
  for all \( t \in \mathcal{F} \)
  \begin{enumerate}[(i)]
    \setcounter{enumi}{7}
  \item\label{item:max-indecomposability} \( I_{t} \) is indecomposable;
  \item\label{item:max-subintervals} if \( J \subset I_{t} \) is a non-empty subinterval with \( \hat{\zeta}(J) \in A \),
    and for all \( s \prec t \) either \( I_{s} \subseteq J \) or \( I_{s} \cap J = \vn \), then
    \( J \subseteq \cup_{s \prec t} I_{s} \).
  \end{enumerate}
\end{definition}

\begin{example}
  \label{ex:evaluation-forest}
  The notion of a maximal evaluation forest is quite technical, and we illustrate it on a concrete example.  Consider
  the word \( \zeta \in \word{G \cup H} \) given by
\begin{displaymath}
  \begin{array}{cccccccccccccccccccccc}
    \zeta & = & g_{1} & b & g_{2} & h_{1} & h_{2} & g_{3} & g_{4} & h_{3} & g_{5} & g_{6} & g_{7} & h_{4}
    & g_{8}  & g_{9} &  h_{5} & h_{6} & g_{10} & h_{7} & h_{8} & g_{11}
  \end{array}
\end{displaymath}
where \( g_{i} \in G \setminus A \), \( h_{j} \in H \setminus A \), and \( b \in A \).  Suppose also that the following
identities hold:
\begin{displaymath}
  \begin{array}{
      r@{\hspace{1cm}}
      r@{\hspace{1cm}}
      r@{\hspace{1cm}}
      r
    }
    g_{3}g_{4} = a_{1} & h_{5} h_{6} = a_{3} & h_{1} h_{2} a_{1} h_{3} a_{2} h_{4} = a_{5} & g_{1} b g_{2} a_{5} g_{8} =
    a_{7}\\
    g_{5}g_{6}g_{7} = a_{2} & h_{7} h_{8} = a_{4} & g_{9} a_{3} g_{10} a_{4} g_{11} = a_{6} \\
  \end{array}
\end{displaymath}
for some \( a_{i} \in A \).  In particular, \( \hat{\zeta} = a_{7} \cdot a_{6} \in A \).  Pictorially cancellations in
\( \zeta \) can be represented as follows.
\begin{displaymath}
  \begin{array}{cccccccccccccccccccccc}
    &
    & \scriptstyle 1
    & \scriptstyle 2
    & \scriptstyle 3    
    & \scriptstyle 4
    & \scriptstyle 5
    & \scriptstyle 6
    & \scriptstyle 7
    & \scriptstyle 8
    & \scriptstyle 9
    & \scriptstyle 10
    & \scriptstyle 11
    & \scriptstyle 12
    & \scriptstyle 13 
    & \scriptstyle 14
    & \scriptstyle 15
    & \scriptstyle 16
    & \scriptstyle 17
    & \scriptstyle 18 
    & \scriptstyle 19
    & \scriptstyle 20 \\
    
    \zeta & = & g_{1} & b & g_{2} & h_{1} & h_{2} & g_{3} & g_{4} & h_{3} & g_{5} & g_{6} & g_{7} & h_{4} & g_{8} & g_{9} &
    h_{5} & h_{6} & g_{10} & h_{7} & h_{8} & g_{11} \\[-0.5\normalbaselineskip]
    
    &&&&&&
    & \multicolumn{2}{@{}l@{}}{%
      \rlap{%
        \hspace*{\arraycolsep}%
        \(\underbracket[0.4pt]{\hphantom{\mbox{\( g_{3} \)\( g_{4} \)\hspace*{ \dimexpr2 \arraycolsep }}}}\)%
      }
    }
    && \multicolumn{3}{@{}l@{}}{%
      \rlap{%
        \hspace*{\arraycolsep}%
        \(\underbracket[0.4pt]{\hphantom{\mbox{\( g_{5} \)\( g_{6} \)\( g_{7} \)\hspace*{ \dimexpr4 \arraycolsep }}}}\)%
      }
    }
    &&&& \multicolumn{2}{@{}l@{}}{%
      \rlap{%
        \hspace*{\arraycolsep}%
        \(\underbracket[0.4pt]{\hphantom{\mbox{\( h_{5} \)\( h_{6} \)\hspace*{ \dimexpr2 \arraycolsep }}}}\)%
      }
    }
    && \multicolumn{2}{@{}l@{}}{%
      \rlap{%
        \hspace*{\arraycolsep}%
        \(\underbracket[0.4pt]{\hphantom{\mbox{\( h_{7} \)\( h_{8} \)\hspace*{ \dimexpr2 \arraycolsep }}}}\)%
      }
    }\\[-0.5\normalbaselineskip]
    
    &&&&
    & \multicolumn{9}{@{}l@{}}{%
      \rlap{%
        \hspace*{\arraycolsep}%
        \(\underbracket[0.4pt]{\hphantom{\mbox{\( h_{1} \)\( h_{2} \)\( g_{3} \)\( g_{4} \)\( h_{3} \)\( g_{5} \)\( g_{6} \)%
              \( g_{7} \)\( h_{4} \)\hspace*{ \dimexpr16 \arraycolsep }}}} \)%
      }
    }
    &
    & \multicolumn{7}{@{}l}{%
      \rlap{%
        \hspace*{\arraycolsep}%
        \( \underbracket[0.4pt]{\hphantom{\mbox{\( g_{9} \)\( h_{5} \)\( h_{6} \)\( g_{10} \)\( h_{7} \)\( h_{8} \)%
              \( g_{11} \)\hspace*{ \dimexpr12 \arraycolsep }}}} \)%
      }
    } \\[-0.5\normalbaselineskip]
    
    && \multicolumn{7}{@{}l}{%
      \rlap{%
        \hspace*{\arraycolsep}%
        \( \underbracket[0.4pt]{\hphantom{\mbox{\( g_{1} \)\( e \)\( g_{2} \)\( h_{1} \)\( h_{2} \)\( g_{3} \)%
              \( g_{4} \)\( h_{3} \)\( g_{5} \)\( g_{6} \)\( g_{7} \)\( h_{4} \)\( g_{8} \)%
              \hspace*{ \dimexpr24 \arraycolsep }}}} \)%
      }
    }
  \end{array}
\end{displaymath}

The corresponding evaluation forest \( \mathcal{F}_{1} \) for \( \zeta \) is shown in Figure \ref{fig:evaluation-forest}.
But note that \( \zeta \) has other evaluation forests as well (for instance, the forest \( \mathcal{F}_{2} \) in Figure
\ref{fig:evaluation-forest}).

\begin{figure}[htb]
  \includegraphics{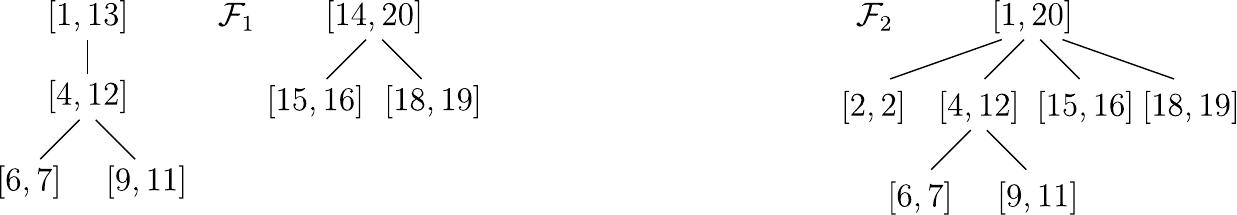}
  \caption{Two evaluation forests for \( \zeta \).  None of them is maximal.}
  \label{fig:evaluation-forest}
\end{figure}

Intuitively speaking an evaluation forest \( \mathcal{F} \) for \( \zeta \) captures the combinatorial structure of
cancellations: leaves of the forest are multipliable subwords that when multiplied produce an element from \( A \),
elements whose all predecessors are leaves correspond to subwords that after the evaluation of leaves become
multipliable and when multiplied yield and element from \( A \), etc.  Informally \( \mathcal{F} \) is a set of
subintervals with multipliable remainders such that two intervals are either disjoint or one is contained in the other,
and in the later case the containment is strict in the sense of item \eqref{item:strict-containment}.

The forests \( \mathcal{F}_{1} \) and \( \mathcal{F}_{2} \) in Figure \ref{fig:evaluation-forest} are not maximal.  In
\( \mathcal{F}_{1} \) item \eqref{item:max-subintervals} fails: for \( I = [1,13] \) we may add a subinterval
\( J = [2,2] \).  The forest \( \mathcal{F}_{2} \), which consists of a single tree, is not maximal because of the
failure of item \eqref{item:max-indecomposability}: we have \( I_{\vn} = [1,20] \) and
\( I_{\vn} = [1,13] \sqcup [14,20] \) with \( \hat{\zeta}\bigl( [1,13] \bigr) \in A \) and
\( \hat{\zeta}\bigl( [14,20] \bigr) \in A \).  But these are the only obstacles that prevent \( \mathcal{F}_{1} \) and
\( \mathcal{F}_{2} \) from being maximal (provided that no further relations between elements \( g_{i} \) and
\( h_{j} \) hold; for instance, if also \( h_{1} h_{2} \in A \), then the forest in Figure
\ref{fig:evaluation-forest-max} is not maximal either).  It is therefore easy to modify these forests to get a
maximal forest \( \mathcal{F}_{3} \), which is shown in Figure \ref{fig:evaluation-forest-max}.

\begin{figure}[htb]
  \includegraphics{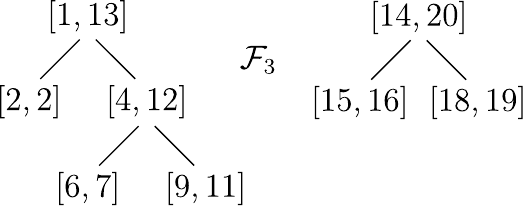}
  \caption{A maximal evaluation forest for \( \zeta \).}
  \label{fig:evaluation-forest-max}
\end{figure}

Item \eqref{item:max-subintervals} can be reformulated in a number of ways.  If \( J \subset I_{t} \) is a subinterval
such that \( J \cap I_{s} = \vn \) or \( I_{s} \subseteq J \) for any \( s \prec t \), then the condition
\( J \subseteq \bigcup_{s \prec t} I_{s} \) is equivalent to saying that \( J = \bigsqcup_{i = k}^{l} I_{s_{i}} \) for
some \( 1 \le k \le l \le m \), where \( s_{1}, \ldots, s_{m} \) are the immediate predecessors of \( t \) listed in the
order \( M(I_{s_{i}}) < m(I_{s_{i+1}}) \).  Yet another reformulation would be to say that
\( J \cap R_{t} = \vn \).  So item \eqref{item:max-subintervals} prohibits the situation shown in Figure
\ref{fig:item-max-subintervals-illustration}, where stars represent elements of \( R_{t} \).

\begin{figure}[htb]
  \includegraphics{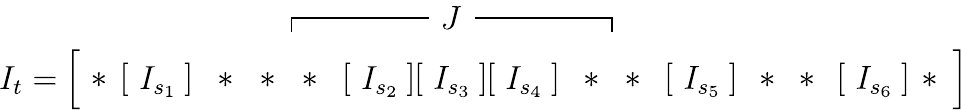}
  \caption{Such an interval \( J \) with \( \hat{\zeta}(J) \in A \) is prohibited by item \eqref{item:max-subintervals}.}
  \label{fig:item-max-subintervals-illustration}
\end{figure}
\end{example}

\begin{remark}
  \label{rem:elements-from-A-are-terminal-in-max-forests}
  If \( \mathcal{F} \) is a maximal forest on \( \zeta \) and \( i \) is such that \( \zeta(i) \in A \), then there must
  be a node \( t_{0} \in \mathcal{F} \) such that \( I_{t_{0}} = [i,i] \).  Indeed, if this were not the case, we would
  find the smallest \( t \in \mathcal{F} \) such that \( \zeta(i) \in I_{t} \) and obtain a contradiction with
  \eqref{item:max-subintervals} for \( I_{t} \) and \( J = [i,i] \).
\end{remark}

\begin{remark}
  \label{sec:non-uniqueness-of-maximal-forests}
  While there is much less freedom in constructing maximal evaluation forests when compared to general evaluation
  forests, there is still some amount of flexibility.  Here is a concrete example.  Let \( G = S_{6} \) --- the
  symmetric group on six elements, and let \( A = \{e\} \) be the trivial subgroup.  The group \( H \) does not matter,
  since our word will use only letters from \( G \).  Consider elements of \( G \)
  \begin{displaymath}
    g_{1} = (1 2), \quad g_{2} = (3 4), \quad g_{3} = (1 2)(3 4), \quad f_{1} = (1 2)(3 4)(5 6), \quad f_{2} = (5 6).
  \end{displaymath}
  The word
  \begin{displaymath}
    \begin{array}{cccccccc}
           & \scriptstyle  1  & \scriptstyle 2     & \scriptstyle 3 & \scriptstyle 4& \scriptstyle 5 & \scriptstyle 6 &
           \scriptstyle 7 \\
     \zeta = & f_{1} & g_{1} & g_{2} & g_{2} & g_{1} & g_{3} & f_{2} \\[1mm]
    \end{array}
  \end{displaymath}
    is trivial and it has two maximal evaluation forests (see Figure \ref{fig:evaluation-forest-max-non-uniqueness}).
  \begin{figure}[htb]
  \includegraphics{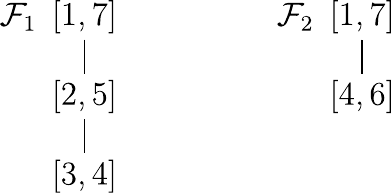}
  \caption{Two distinct maximal evaluation forests for \( \zeta \), each consisting of a single tree.}
  \label{fig:evaluation-forest-max-non-uniqueness}
\end{figure}
\end{remark}

\begin{proposition}
  \label{prop:existence-of-maximal-forest}
  Any word \( \zeta \in \word{G \cup H} \) with \( \hat{\zeta} \in A \) has a maximal evaluation forest.
\end{proposition}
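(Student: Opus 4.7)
The plan is to proceed by strong induction on $n = |\zeta|$. The base case $n = 1$ forces $\zeta(1) \in A$, and the forest with a single node $t$ having $I_t = [1,1]$ trivially satisfies every clause of Definition \ref{def:tree}. For the inductive step I distinguish two cases. If $[1,n]$ is itself decomposable as $[1,k] \sqcup [k+1,n]$ with both pieces $A$-blocks, then the inductive hypothesis applied to $\zeta([1,k])$ and $\zeta([k+1,n])$ produces maximal evaluation forests $\mathcal{F}_1$ and $\mathcal{F}_2$, and their disjoint union (with the second shifted into $[k+1,n]$) is a maximal evaluation forest for $\zeta$: conditions (i)--(vii) are immediate, and (viii)--(ix) are inherited because every tree of the union lies entirely inside $\mathcal{F}_1$ or $\mathcal{F}_2$.

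The substantive case is when $[1,n]$ is indecomposable. Here the target forest must be a single tree with root $\vn$ and $I_\vn = [1,n]$, and the real work is to choose the immediate predecessors of $\vn$. I would do this by a three-stage greedy procedure. Start with $\mathcal{S} = \vn$. In Stage~1, while $\zeta(R)$ is not multipliable (where $R = [1,n]\setminus\bigcup\mathcal{S}$), enlarge $\mathcal{S}$ by a proper $A$-subblock of $[1,n]$ contained in $R$. In Stage~2, whenever some proper $A$-subblock $J \subset [1,n]$ is compatible with $\mathcal{S}$ (each $I_s$ is either inside $J$ or disjoint from $J$) and meets $R$, absorb $J$ into $\mathcal{S}$ (swallowing those $I_s \subseteq J$). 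In Stage~3, split any decomposable $I_{s_k}$ along an $A$-splitting into two pieces. Stages~2 and~3 preserve both multipliability of $\zeta(R)$ and condition (ix) (splitting refines compatibility without enlarging $\bigcup\mathcal{S}$), and each stage strictly decreases a nonnegative integer invariant (respectively $|R|$, $|R|$, and $\sum_k(|I_{s_k}|-1)$), so the procedure terminates. At the end, conditions (vii), (viii), (ix) all hold by construction, and the inductive hypothesis applied to each subword $\zeta(I_{s_k})$ (of length strictly less than $n$, evaluating into $A$) delivers the subtree rooted at $s_k$.

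The main obstacle is showing that Stage~1 can always make progress when $\zeta(R)$ is not yet multipliable. For this I would form the collapsed word $\tilde\zeta$ obtained from $\zeta$ by replacing each $I_s \in \mathcal{S}$ by one letter equal to $\hat\zeta(I_s) \in A$. Then $\hat{\tilde\zeta} = \hat\zeta \in A$, every $\mathcal{S}$-position of $\tilde\zeta$ carries an $A$-letter, and if $\zeta(R)$ has both a $G\setminus A$-letter and an $H\setminus A$-letter then $\tilde\zeta$ is not multipliable. Since $\hat{\tilde\zeta} \in A$, uniqueness of the normal form in $G*_{A}H$ implies that $\tilde\zeta$ is not already reduced, so at some intermediate step of its normal-form reduction two adjacent same-group entries must merge into an element of $A$; indecomposability of $[1,n]$, applied to the preimage in $\zeta$, prevents this $A$-producing merge from being the terminal one consuming the whole word. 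Lifting the corresponding block of $\tilde\zeta$ along the collapsing map produces a proper $A$-subblock $J \subsetneq [1,n]$ compatible with $\mathcal{S}$ and meeting $R$, which is exactly the interval Stage~1 needs. The technically demanding step is this lifting: one must verify that the block isolated inside $\tilde\zeta$ really does correspond to a contiguous sub-interval of $[1,n]$ whose $\zeta$-evaluation lies in $A$, and that it is proper, so that iterating the greedy procedure is well defined.
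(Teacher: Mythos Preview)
Your approach is genuinely different from the paper's. The paper, in the indecomposable case, picks a single proper $A$-subinterval $J$ of length $\ge 2$, collapses it to one letter to form a shorter word $\zeta_1$, applies induction to $\zeta_1$, and then repairs the single place where decomposability may have been introduced by re-applying induction to that node's interval. You instead build the root's immediate children directly by a greedy process and recurse on each child's interval. Both strategies are viable; yours is more constructive at the top level but pushes the combinatorial difficulty into the correctness of the greedy stages, while the paper's pushes it into the splice-and-repair step.

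That said, two of your justifications do not go through as written. First, in Stage~1 your reduction argument for finding a proper $A$-block in $\tilde{\zeta}$ meeting $R$ is incorrect: the terminal merge in any reduction of $\tilde{\zeta}$ always produces an element of $A$ (since $\hat{\tilde{\zeta}}\in A$), and indecomposability of $[1,n]$ only rules out the two penultimate pieces \emph{individually} lying in $A$; it does not force an earlier $A$-producing merge. The correct argument is to partition $\tilde{\zeta}$ into a minimal number $r$ of multipliable runs $I_1,\dots,I_r$: minimality forces each run to contain a non-$A$ letter (hence an $R$-position) and forces consecutive runs into different factors, so if every $c_k=\hat{\tilde{\zeta}}(I_k)$ were outside $A$ the word $c_1\cdots c_r$ would be a reduced alternating word of length $r\ge 2$, contradicting $\hat{\tilde{\zeta}}\in A$. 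Thus some $I_k$ is the required proper $A$-block containing an $R$-position. (Also, your Stage~1 description says ``contained in $R$'' but your justification argues ``compatible with $\mathcal S$ and meeting $R$''; only the latter is what you actually obtain and need.)

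Second, your reason for Stage~3 preserving condition~(ix) is backwards. Splitting an $I_s$ into $I_{s'}\sqcup I_{s''}$ \emph{enlarges} the family of compatible intervals, so a priori (ix) becomes harder. What actually saves you is this: any newly compatible $J$ must use the split point, so $J$ contains exactly one of $I_{s'},I_{s''}$, say $I_{s'}$; then $J\setminus I_{s'}$ is an interval compatible with the old $\mathcal S$, satisfies $\hat\zeta(J\setminus I_{s'})\in A$ iff $\hat\zeta(J)\in A$ (because $\hat\zeta(I_{s'})\in A$), and has the same intersection with $R$ as $J$. Hence a violation of (ix) after the split would yield a violation before the split. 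With these two arguments supplied, your top-down greedy construction does produce a maximal evaluation forest.
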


\begin{proof}
  We prove the statement by induction on the length of \( \zeta \).  If \( |\zeta| = 1 \), then \( \zeta = a \) for some
  \( a \in A \), and therefore we may take \( \mathcal{F} \) to consist of a single root \( \mathcal{F} = \{ \vn \} \)
  with \( I_{\vn} = [1,1] \).

  Suppose the proposition has been proved for all words of length \( < n \) and let \( \zeta \) have length \( n \).

  \medskip

  \noindent \emph{Step 1: Decomposing \( [1,n] \).} If \( [1,n] \) is decomposable and \( [1,n] = J_{1} \sqcup J_{2} \)
  with \( \hat{\zeta}(J_{1}) \in A \) and \( \hat{\zeta}(J_{2}) \in A \), then we may apply the assumption of induction
  to the words \( \zeta_{1} = \zeta(J_{1}) \), \( \zeta_{2} = \zeta(J_{2}) \) and obtain their maximal evaluation
  forests \( \mathcal{F}_{1} \) and \( \mathcal{F}_{2} \) respectively.  The maximal evaluation forest for \( \zeta \)
  is then just the union of \( \mathcal{F}_{1} \) and \( \mathcal{F}_{2} \) with the natural assignment of intervals.

  \medskip

  We therefore may assume that \( [1,n] \) is indecomposable.
  
  \medskip

  \noindent \emph{Step 2: Typical case.} Suppose that we can find a proper subinterval \( J \subset [1,n] \) such that
  \( |J| \ge 2 \) and \( \hat{\zeta}(J) = a \in A \).  Let \( \zeta_{1} \) be the word obtained from \( \zeta \) by evaluating
  \( \zeta(J) \):
  \[ \zeta_{1} = \zeta\bigl( [1, m(J)-1] \bigr)\ a\ \zeta\bigl( [M(J)+1,n] \bigr). \]
  Since \( |J| \ge 2 \), the length of \( \zeta_{1} \) is less than \( n \), hence by the inductive assumption we may find
  an evaluation forest \( \mathcal{F}_{1} \) for \( \zeta_{1} \) with the assignment of intervals \( t \mapsto J_{t}
  \subseteq \bigl[1, |\zeta_{1}|\bigr] \).
  Let now \( I_{t} \) be the subintervals of \( [1,n] \) obtained from \( J_{t} \) by inserting \( \zeta(J) \) back into
  \( \zeta_{1} \), or, more formally:
  \begin{displaymath}
    I_{t} =
    \begin{cases}
      \bigl[ m(J_{t}), M(J_{t}) \bigr] & \textrm{if \( m(J_{t}), M(J_{t}) < m(J) \)},\\
      \bigl[ m(J_{t}), M(J_{t}) + |J| - 1 \bigr] & \textrm{if \( m(J_{t}) \le m(J) \le M(J_{t}) \)},\\
      \bigl[ m(J_{t}) + |J| - 1, M(J_{t}) + |J| -1 \bigr] & \textrm{if \( m(J) < m(J_{t}), M(J_{t})
      \)}.
    \end{cases}
  \end{displaymath}
  By the maximality of \( \mathcal{F}_{1} \) for \( \zeta_{1} \), the intervals \( J_{t} \) are indecomposable relative to
  \( \zeta_{1} \), but this may no longer be true for the intervals \( I_{t} \) relative to \( \zeta \), because an interval
  \( I_{t} \) with \( m(J) \in I_{t}\) has more possibilities for decomposition than the corresponding interval
  \( J_{t} \).

  The subword \( \zeta(J) \) itself has length \( < n \), and therefore the inductive assumption yields its maximal
  evaluation forest \( \widetilde{\mathcal{F}}_{2} \).  Since \( \zeta_{1}\bigl(m(J)\bigr) = a \in A \), by Remark
  \ref{rem:elements-from-A-are-terminal-in-max-forests} there is some \( t_{0} \in \mathcal{F}_{1} \) such that
  \( J_{t_{0}} = [m(J), m(J)] \).  The naive approach would be ``to put the forest \( \widetilde{\mathcal{F}}_{2} \)
  instead of the node \( t_{0} \)'' (see Figure \ref{fig:composing-forests-1}).  This does not work in general precisely
  because some of the intervals \( I_{t} \) may be decomposable.

  \begin{figure}[htb]
    \includegraphics{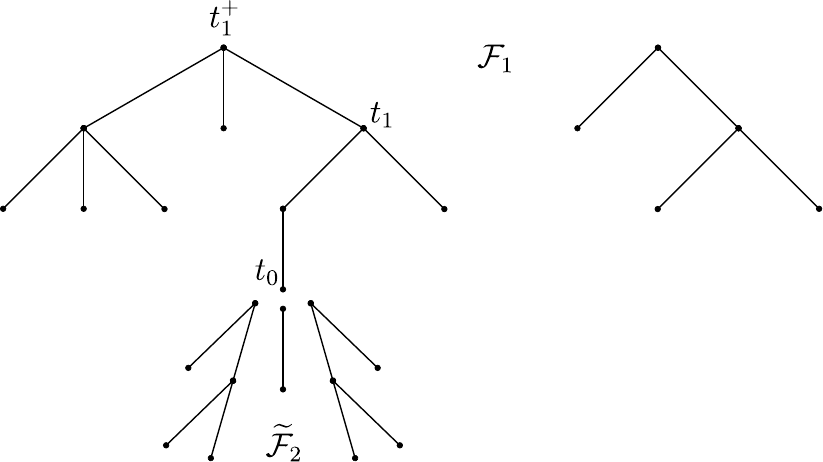}
    \caption{Naive approach of constructing \( \mathcal{F} \).}
    \label{fig:composing-forests-1}
  \end{figure}

  In order to fix this let \( t_{1} \in \mathcal{F}_{1} \) be the maximal node with \( I_{t_{1}} \) being decomposable
  (note that \( m(J) \in I_{t} \) for all decomposable intervals \( I_{t} \), hence such intervals are comparable, and
  the largest node \( t_{1} \) exists).  If all \( I_{t} \) are indecomposable, we set \( t_{1} = t_{0} \).
  Note that \( t_{1} \) is not the root of \( \mathcal{F}_{1} \), since
  \( I_{\vn} \) is assumed to be indecomposable.  In particular \( \zeta(I_{t_{1}}) \) has length strictly less than \( n \),
  and therefore by the assumption of induction it admits a maximal evaluation forest \( \mathcal{F}_{2} \) with
  intervals \( s \mapsto K_{s} \).

  \begin{figure}[htb]
    \includegraphics{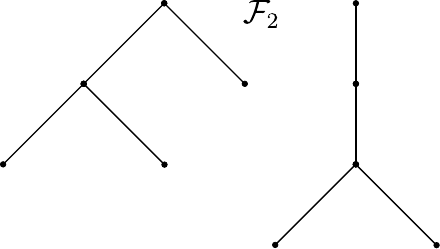}
    \caption{A possible example of the forest \( \mathcal{F}_{2} \).}
    \label{fig:composing-forests-2}
  \end{figure}

  We define the forest \( \mathcal{F} \) for \( \zeta \) by
  \[ \mathcal{F} = \{ s \in \mathcal{F}_{1} \mid s \not \preceq t_{1} \} \sqcup \mathcal{F}_{2}, \]
  with the ordering extending the orderings of \( \mathcal{F}_{1} \) and \( \mathcal{F}_{2} \) and
  \( s \prec t^{+}_{1} \) for all \( s \in \mathcal{F}_{2} \) (see Figure~\ref{fig:composing-forests-2} and
  Figure~\ref{fig:composing-forests-3}).  The assignment of intervals \( \mathcal{F} \ni t \mapsto I_{t} \) is the
  natural one: we have already defined \( I_{t} \) for \( t \in \mathcal{F}_{1} \cap \mathcal{F} \), and for
  \( s \in \mathcal{F}_{2} \) the interval \( I_{s} \) is just the interval \( K_{s} \) shifted by \( m(I_{t_{1}}) - 1 \):
  \[ I_{s} = \bigl[ m(K_{s}) + m(I_{t_{1}}) - 1, M(K_{s}) + m(I_{t_{1}}) -1 \bigr]. \]
  
  \begin{figure}[htb]
    \includegraphics{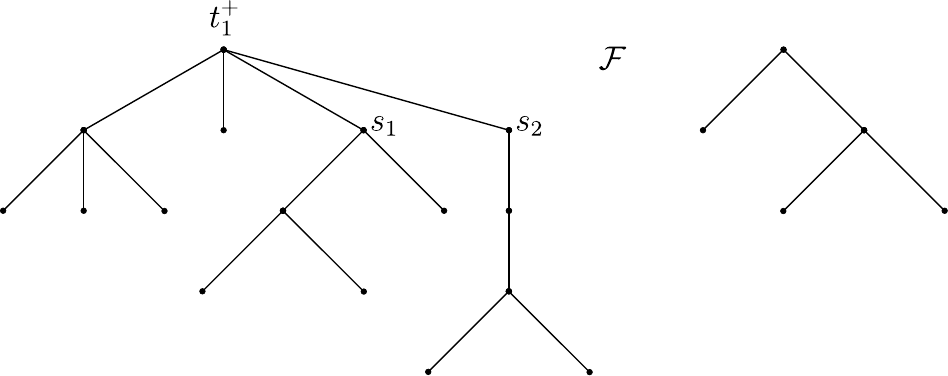}
    \caption{The forest
      \( \mathcal{F} = \{\, s \in \mathcal{F}_{1} \mid s \not \preceq t_{1} \,\} \sqcup \mathcal{F}_{2}\).}
    \label{fig:composing-forests-3}
  \end{figure}

  We claim that \( \mathcal{F} \) with \( s \mapsto I_{s} \) is a maximal evaluation forest for \( \zeta \).  It is
  straightforward to check that \( \mathcal{F} \) is an evaluation forest and item \eqref{item:max-indecomposability}
  follows immediately from the construction.  It remains to check item \eqref{item:max-subintervals}.  For
  \( s \ne t_{1}^{+} \) item \eqref{item:max-subintervals} follows from the maximality of \( \mathcal{F}_{1} \) and
  \( \mathcal{F}_{2} \), we need to check it only for \( t_{1}^{+} \).

  Suppose we have a subinterval \( L \subset I_{t_{1}^{+}} \) such that \( \hat{\zeta}(L) \in A \), for all
  \( s \prec t_{1}^{+} \) either \( I_{s} \cap L = \vn \) or \( I_{s} \subseteq L \) and
  \( L \cap R_{t_{1}^{+}} \ne \vn \).  Let \( s_{1}, \ldots, s_{m} \) be the immediate predecessors of \( t_{1}^{+} \),
  and let \( s_{k}, \ldots, s_{l} \) be those of the predecessors of \( t_{1}^{+} \) that correspond to the roots of
  \( \mathcal{F}_{2} \).  Note that the intervals \( I_{s_{i}} \) are adjacent for \( k \le i < l \):
  \( M(I_{s_{i}}) + 1 = m(I_{s_{i+1}}) \).  The idea is to construct an interval \( \widetilde{L} \) that will
  contradict item \eqref{item:max-subintervals} for \( t_{1}^{+} \) in the forest \( \mathcal{F}_{1} \). We have
  several cases.

  \emph{Case 1:  \( L \cap I_{s_{i}} = \vn \) for all \( k \le i \le l \).}  In this case the interval \( L \) naturally
  corresponds to a subword of~\( \zeta_{1} \); let
  \begin{displaymath}
    \widetilde{L} =
    \begin{cases}
      L & \textrm{if \( M(L) < m(I_{s_{k}}) \)},\\
      \bigl[ m(L) - |I_{t_{1}}| + 1, M(L) - |I_{t_{1}}| + 1 \bigr] & \textrm{if \( M(I_{s_{l}}) < m(L) \)}.
    \end{cases}
  \end{displaymath}
  We now get a contradiction with item \eqref{item:max-subintervals} of the maximality of \( \mathcal{F}_{1} \) for
  \( \zeta_{1} \) with \( J_{t_{1}^{+}} \) and \( \widetilde{L} \).

  \medskip

  We therefore may assume that either \( I_{s_{k}} \subseteq L \) or
  \( I_{s_{l}} \subseteq L \).  In either case, we may enlarge \( L \) to an interval \( L' \) defined by (see Figure
  \ref{fig:construction-of-L-prime})
  \[ L' = L \cup \Bigl(\, \bigsqcup_{i=k}^{l} I_{s_{i}} \Bigr), \]
  
  \begin{figure}[htb]
    \includegraphics{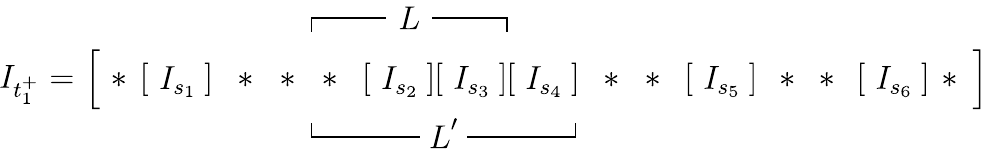}
    \caption{The construction of the interval \( L' \).  Here \( k = 2 \) and \( l = 4 \).}
    \label{fig:construction-of-L-prime}
  \end{figure}

  \emph{Case 2: \( L' = I_{t_{1}^{+}} \).}  In this case \( L \) is either an initial subinterval of \( I_{t_{1}^{+}}
  \), or a terminal subinterval.  In both cases \( I_{t_{1}}^{+} \) is decomposable contrary to the choice of
  \( t_{1} \).

  \emph{Case 3: \( L' \ne I_{t_{1}^{+}} \).} Since \( I_{s_{i}} \subset L' \) for all \( s_{i} \) that correspond to the
  roots of \( \mathcal{F}_{2} \), we may let \( \widetilde{L}' \) be the subinterval of \( J_{t_{1}^{+}} \) that
  corresponds to \( L' \):
  \[ \widetilde{L}' = \bigl[ m(L'), M(L') - |I_{t_{1}}| + 1 \bigl]. \]
  We again get a contradiction with item \eqref{item:max-subintervals} and maximality of \( \mathcal{F}_{1} \) for
  \( \zeta_{1} \), since \( \widetilde{L}' \) is a proper subinterval of \( J_{t_{1}^{+}} \) by the assumptions of this
  case.

  \medskip

  \noindent \emph{Step 3: Degenerate case.}  In the last step we suppose that one cannot find any \( J \subset [1,n] \)
  such that \( |J| \ge 2 \) and \( \hat{\zeta}(J) \in A \).  It is easy to see that in this case \( \zeta \) must be
  multipliable.  Let \( i_{1}, \ldots, i_{m} \) be the list of letters from \( A \): \( \zeta(i_{k}) \in A \) for all
  \( k \le m \).  We must have \( i_{k} + 1 < i_{k+1} \), because if we had two consecutive letters from \( A \) at
  indices, say, \( i \) and \( i+1 \), there would be a contradiction with the assumptions of this step for
  \( J = [i, i+1] \). (To be precise, we get a contradiction if also \( n \ge 3 \); if \( n = 2 \), and
  \( \zeta = a_{1} \, a_{2} \), then the maximal evaluation forest consists of two trivial trees \( [1,1] \) and
  \( [2,2] \).)  Note also that \( 1 < i_{1} \) and \( i_{m} < n \), because otherwise \( [1,n] \) is decomposable.  Put
  now \( \mathcal{F} = \{ \vn, t_{1}, \ldots, t_{m}\} \) and \( I_{\vn} = [1,n]\), \( I_{t_{k}} = [i_{k}, i_{k}] \).  It
  is straightforward to check that \( \mathcal{F} \) is a maximal forest for \( \zeta \).
\end{proof}

\subsection{Reductions}
\label{sec:reductions}

An \( f \)-pair \( (\alpha, \zeta) \) is said to be \emph{multipliable\/} if \( \alpha(i) \) is multipliable with
\( \zeta(i) \) for any \( i \). (We therefore use the word multipliable in two senses: a word is multipliable if all of
its letters come from the same group, while a pair of words is multipliable if for each index corresponding letters of
two words are from the same group.) Our first reduction states that in the definition of the norm function \( \norm{} \)
one may take only multipliable pairs.

\begin{lemma}[cf.~Lemma 5.2 \cite{1111.1538}]
  \label{lem:reduction-to-congruent-pairs}
  For any \( f \in G*_{A}H \)
  \[ \norm{f} = \inf \big\{\, \rho(\alpha, \zeta) \bigm| \textrm{\( (\alpha, \zeta) \) is a multipliable \( f \)-pair}
  \,\big\}. \]
\end{lemma}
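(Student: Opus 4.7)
The direction \( \norm{f} \le \inf \bigl\{ \rho(\alpha,\zeta) \mid (\alpha,\zeta)\ \textrm{is a multipliable \( f \)-pair} \bigr\} \) is trivial, since multipliable pairs form a subclass of \( f \)-pairs.  My plan is to prove the reverse inequality by showing that, for every \( f \)-pair \( (\alpha, \zeta) \) and every \( \epsilon > 0 \), one can produce a multipliable \( f \)-pair \( (\alpha', \zeta') \) with \( \rho(\alpha', \zeta') \le \rho(\alpha, \zeta) + \epsilon \).

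Call an index \( i \) \emph{bad\/} for \( (\alpha, \zeta) \) if \( \alpha(i) \) and \( \zeta(i) \) are not multipliable; this happens exactly when one of them lies in \( G \setminus A \) and the other in \( H \setminus A \), since any letter in \( A \) belongs to both groups.  I would proceed by induction on the number of bad indices, using the splitting construction below to remove one bad index per step.  Because \( \rho \) is a maximum rather than a sum, the same error \( \epsilon \) can be spent at every step without accumulating: all that matters is that every distance introduced by the construction is bounded by \( \rho(\alpha, \zeta) + \epsilon \).

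For the splitting step, pick a bad index \( i \) with, say, \( \alpha(i) = g \in G \setminus A \) and \( \zeta(i) = h \in H \setminus A \).  By definition of the ultrametric amalgam,
\[ d(g, h) = \inf\limits_{a \in A} \max\bigl\{ d_{G}(g, a),\ d_{H}(a, h) \bigr\}, \]
so I can choose \( c \in A \) with \( \max\bigl\{d_{G}(g, c),\, d_{H}(c, h)\bigr\} \le d(g, h) + \epsilon \).  Now replace the single letter \( \alpha(i) \) by the two-letter block \( g \cdot e \) and the letter \( \zeta(i) \) by \( c \cdot c^{-1}h \), interpreting \( g \in G,\ e \in H,\ c \in A \subseteq G \), and \( c^{-1}h \in H \).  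The resulting words \( \alpha', \zeta' \) still satisfy \( \hat{\alpha}' = f \) and \( \hat{\zeta}' = e \) (because \( g \cdot e = g \) and \( c \cdot c^{-1} h = h \)), they have equal length, and the two new positions are multipliable in \( G \) and in \( H \) respectively.  Left-invariance of \( d_{H} \) gives \( d_{H}(e, c^{-1}h) = d_{H}(c, h) \), so both new distances are at most \( d(g, h) + \epsilon \le \rho(\alpha, \zeta) + \epsilon \).  All other positions are untouched, so \( \rho(\alpha', \zeta') \le \rho(\alpha, \zeta) + \epsilon \) while the number of bad indices has strictly decreased.

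The main obstacle is mild and essentially bookkeeping: one must verify that the inserted letters can be consistently interpreted on the correct side of the amalgam and that the splitting introduces no new bad indices outside the split positions.  Both are immediate from the construction.  Iterating the splitting step finitely many times produces the required multipliable \( f \)-pair; letting \( \epsilon \to 0 \) and taking the infimum over all \( f \)-pairs then yields the lemma.
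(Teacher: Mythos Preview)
Your proof is correct and follows essentially the same approach as the paper: both split each bad index into two positions by routing through an approximating element of \( A \), and both exploit that \( \rho \) is a maximum so the same \( \epsilon \) suffices at every index. The only cosmetic difference is that the paper factors on the \( \alpha \)-side (replacing \( \alpha(i) \) by \( \alpha(i)a^{-1}\,\cdot\, a \) and \( \zeta(i) \) by \( e\,\cdot\, \zeta(i) \)), whereas you factor on the \( \zeta \)-side; the two are mirror images and equally valid.
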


\begin{proof}
  The idea of the proof is simple.  Let \( (\alpha, \zeta) \) be an \( f \)-pair.  Fix an \( \epsilon > 0 \).  If we have
  letters \( \alpha(i) \) and \( \zeta(i) \) which are not multipliable, then by the definition of the metric \( d \) on
  \( G \cup H \) we can find an element \( a \in A \) such that
  \[ d\bigl( \alpha(i), \zeta(i) \bigr) \ge \max \bigl\{ d\bigl( \alpha(i), a \bigr),\, d\bigl(a, \zeta(i) \bigr)\bigr\} -
  \epsilon. \]
  Let \( x = \alpha(i) \cdot a^{-1} \).  We now substitute the word `\( x \ a \)' into \( \alpha \) for the letter
  \( \alpha(i) \) and the word `\( e\ \zeta(i) \)' into \( \zeta \) for the letter \( \zeta(i) \).  In other words, if
  \( (\alpha, \zeta) \) is written as
  \begin{displaymath}
    \begin{array}{r@{\hspace{0.7cm}}ccccc}
                 &        & \scriptstyle i-1 &  \scriptstyle i & \scriptstyle i+1 &        \\[1mm]
        \alpha = & \cdots & \alpha(i-1)      &  \alpha(i)      & \alpha(i+1)      & \cdots \\[2mm]
        \zeta    = & \cdots & \zeta(i-1)         &  \zeta(i)         & \zeta(i+1)         & \cdots \\[2mm]
    \end{array}
  \end{displaymath}
  then the pair \( (\alpha_{1}, \zeta_{1}) \) after this substitution can be written as
  \begin{displaymath}
    \begin{array}{r@{\hspace{0.7cm}}cccccc}
      &        & \scriptstyle i-1 &  \scriptstyle i        & \scriptstyle i+1 & \scriptstyle i+2  & \\[1mm]
      \alpha_{1} = & \cdots & \alpha(i-1) &  \alpha(i) \cdot a^{-1} & a            & \alpha(i+1)       & \cdots \\[2mm]
      \zeta_{1}    = & \cdots & \zeta(i-1)    &  e                     & \zeta(i)       & \zeta(i+1)          &\cdots \\[2mm]
    \end{array}
  \end{displaymath}
  One now does this procedure for all \( i \) such that \( \alpha(i) \) and \( \zeta(i) \) are not multipliable.  The
  resulting pair \( (\beta, \xi) \) is multipliable, and by the two-sided invariance of the metric \( d \) we have
  \( \rho(\beta, \xi) \le \rho(\alpha, \zeta) + \epsilon \).  Since the pair \( (\alpha, \zeta) \) and \( \epsilon \) were
  arbitrary, we get
  \[ \norm{f} = \inf \bigl\{\, \rho(\beta, \xi) \bigm| \textrm{\( (\beta, \xi) \) is a multipliable \( f \)-pair}
  \,\bigr\}. \qedhere \]
\end{proof}

\begin{lemma}[cf.~Lemma 5.4 and Lemma 5.5 \cite{1111.1538}]
  \label{lem:existence-of-simple-pair-with-same-rho}
  Let \( (\alpha, \zeta) \) be a multipliable \( f \)-pair, and let \( \mathcal{F} \) be a maximal evaluation forest for
  \( \zeta \).  There exists a multipliable \( f \)-pair \( (\beta, \xi) \) such that
  \begin{enumerate}[(i)]
  \item \( |\xi| = |\zeta| \);
  \item \( \mathcal{F} \) is a maximal evaluation forest for \( \xi \) (with the same assignment \( t \mapsto I_{t}
    \));
  \item \( \rho(\alpha, \zeta) = \rho(\beta, \xi) \);
  \item\label{item:product-of-intervals-trivial} \( \hat{\xi}(I_{t}) = e \) for all \( t \in \mathcal{F} \);
  \end{enumerate}
\end{lemma}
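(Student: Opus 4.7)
The plan is to construct $(\beta,\xi)$ from $(\alpha,\zeta)$ by twisting each position by elements of $A$. Write $n=|\zeta|$ and choose $c_{1},\ldots,c_{n+1}\in A$ (to be specified); set
\[ \xi(i) = c_{i}\,\zeta(i)\,c_{i+1}^{-1}, \qquad \beta(i) = c_{i}\,\alpha(i)\,c_{i+1}^{-1}, \qquad 1 \le i \le n. \]
Multipliability of $(\alpha,\zeta)$ places both letters at index $i$ in a common group $G_{i}\in\{G,H\}$, and since $c_{i},c_{i+1}\in A\subseteq G_{i}$, both products take place inside $G_{i}$; two-sided invariance of $d_{G}$ and $d_{H}$ then gives $d(\beta(i),\xi(i)) = d(\alpha(i),\zeta(i))$ at every $i$, so $\rho(\beta,\xi)=\rho(\alpha,\zeta)$ and $(\beta,\xi)$ is again multipliable with $|\xi|=|\zeta|$. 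Telescoping in the free product yields the key identity, valid for any subinterval $J\subseteq[1,n]$:
\[ \hat{\xi}(J) = c_{m(J)}\,\hat{\zeta}(J)\,c_{M(J)+1}^{-1}, \]
and analogously for $\hat{\beta}(J)$. Imposing the boundary condition $c_{1}=c_{n+1}=e$ and specializing to $J=[1,n]$ gives $\hat{\xi}=e$ and $\hat{\beta}=\hat{\alpha}=f$, so $(\beta,\xi)$ will indeed be an $f$-pair.

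Now set $a_{t}:=\hat{\zeta}(I_{t})\in A$. Applying the displayed identity with $J=I_{t}$ shows that the target condition \eqref{item:product-of-intervals-trivial}, $\hat{\xi}(I_{t})=e$, is equivalent to
\[ c_{M(I_{t})+1} = c_{m(I_{t})}\cdot a_{t}, \qquad t\in\mathcal{F}. \]
I define the $c_{j}$'s recursively along $j$: put $c_{1}=e$, and for $j>1$, if there is a node $t\in\mathcal{F}$ with $M(I_{t})=j-1$ — which is unique when it exists, by the strict containment clause \eqref{item:strict-containment} forcing distinct maxima across distinct nodes — set $c_{j}=c_{m(I_{t})}\cdot a_{t}$; otherwise put $c_{j}=c_{j-1}$. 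By induction $c_{j}\in A$, and the constraint at each $t$ holds by construction. The roots of $\mathcal{F}$ partition $[1,n]$ into adjacent intervals, so chaining the recursion through the root constraints produces $c_{n+1} = \prod_{\mathcal{T}} a_{\vn(\mathcal{T})} = \hat{\zeta} = e$, which supplies the required boundary condition. This is the one place where the triviality of $\zeta$ enters.

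It remains to verify that $\mathcal{F}$, with the same assignment $t\mapsto I_{t}$, is a maximal evaluation forest for $\xi$. The forest-structural clauses (i)--(v) of Definition~\ref{def:tree} are untouched. The crucial observation is that, since $c_{m(J)},c_{M(J)+1}\in A$ and $A$ is a subgroup, the identity $\hat{\xi}(J)=c_{m(J)}\hat{\zeta}(J)c_{M(J)+1}^{-1}$ shows $\hat{\xi}(J)\in A$ if and only if $\hat{\zeta}(J)\in A$ for every subinterval $J$. Hence condition (vi) is satisfied (in fact strengthened to $\hat{\xi}(I_{t})=e$), condition (vii) persists because $A$-twisting preserves the group of each letter, and the maximality conditions \eqref{item:max-indecomposability} and \eqref{item:max-subintervals} transfer verbatim from $\zeta$ to $\xi$. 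The only potentially delicate step in the whole argument is the consistency check $c_{n+1}=e$ at the end of the recursion, which relies simultaneously on the forest structure (so that each $a_{t}\in A$ and the recursion is well-defined) and on the triviality $\hat{\zeta}=e$.
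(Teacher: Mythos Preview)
Your proof is correct. Both your argument and the paper's rest on the same idea: twist each letter of \( \alpha \) and \( \zeta \) on the left and right by elements of \( A \), so that the evaluations \( \hat{\xi}(I_{t}) \) become trivial while \( \rho \), multipliability, and the forest structure are preserved. The difference is packaging. The paper introduces a local ``transfer operation'' (right-multiply position \( i \) by \( a^{-1} \), left-multiply position \( i+1 \) by \( a \)) and applies it node by node in a carefully specified order (leaves to roots, left to right), checking at each stage that earlier equalities are not disturbed. You instead set up the twist globally via the sequence \( (c_{j}) \) and a single recursion, which collapses the paper's ordered sequence of transfers into one declarative definition; the paper's ordering becomes implicit in your left-to-right recursion on \( j \). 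Your telescoping identity \( \hat{\xi}(J)=c_{m(J)}\hat{\zeta}(J)c_{M(J)+1}^{-1} \) then makes the preservation of conditions (vi)--(ix) a one-line observation, whereas the paper verifies them after each transfer. The only genuinely new content in your write-up is the consistency check \( c_{n+1}=e \), which you handle correctly by chaining through the root constraints; this corresponds in the paper to the final paragraph explaining why \( \hat{\xi}(I_{\vn_{p}})=e \) comes for free from \( \hat{\zeta}=e \).
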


\begin{proof}
  The proof is based on the following observation.  Let \( a \in A \), \( i < |\alpha| \), and define a pair
  \( (\alpha_{1}, \zeta_{1}) \) by changing \( \alpha(i) \) to \( \alpha(i) \cdot a^{-1} \), \( \alpha(i+1) \) to
  \( a \cdot \alpha(i+1) \) and also \( \zeta(i) \) to \( \zeta(i) \cdot a^{-1} \) and \( \zeta(i+1) \) to
  \( a \cdot \zeta(i+1) \):
  \begin{displaymath}
    \begin{array}{r@{\hspace{0.7cm}}cccc}
               &        &  \scriptstyle i & \scriptstyle i+1 & \\[1mm]
      \alpha = & \cdots &  \alpha(i) & \alpha(i+1) & \cdots \\[2mm]
      \zeta    = & \cdots &  \zeta(i)    & \zeta(i+1)    & \cdots \\[2mm]
      & & \multicolumn{2}{c}{\!\!\!\!\! \downarrow} &  \\[3mm]
     \alpha_{1} = & \cdots &  \alpha(i) \cdot a^{-1} & a \cdot \alpha(i+1) & \cdots \\[2mm]
     \zeta_{1} =    & \cdots &  \zeta(i) \cdot a^{-1}    & a \cdot \zeta(i+1)    & \cdots \\[2mm]
    \end{array}
  \end{displaymath}
  We call this operation a \emph{transfer operation}.  Observe that \( (\alpha_{1}, \zeta_{1}) \) is also a multipliable
  \( f \)-pair, \( \rho(\alpha, \zeta) = \rho(\alpha_{1}, \zeta_{1}) \), and \( \mathcal{F} \) is still a maximal
  evaluation forest for \( \zeta_{1} \), since \( \zeta_{1}(j) \in A \) if and only if \( \zeta(j) \in A \).  A typical
  application of the transfer is for \( a = \hat{\zeta}(I_{t}) \) and \( i = M(I_{t}) \), which yields a pair
  \( (\alpha_{1}, \zeta_{1}) \) with \( \hat{\zeta}_{1}(I_{t}) = e \).

  To obtain the desired pair \( (\beta, \xi) \), we apply the transfer operation for all intervals \( I_{t} \),
  \( t \in \mathcal{F} \setminus \roots{\mathcal{F}} \).  But this has to be done in a consistent order.  We traverse
  the forest \( \mathcal{F} \) ``from leaves to roots'' and ``from left to right''.  Somewhat more formally, we can
  define a function \( h : \mathcal{F} \to \mathbb{N}\) by \( h(t) \) being ``the longest downward path to a leaf''.
  For example, \( h(t) = 0 \) if and only if \( t \) is a leaf, \( h(t) = 1 \) if and only if all the predecessors of
  \( t \) are leaves, etc.

  Let \( t_{1}, \ldots, t_{m} \) be all the leaves of \( \mathcal{F} \) ordered in such a way that
  \( M(I_{t_{i}}) < m(I_{t_{i+1}}) \).  First we apply the transfer for \( a = \hat{\zeta}(I_{t_{1}}) \) at the index
  \( M(I_{t_{1}}) \) and obtain a pair \( (\alpha_{1}, \zeta_{1}) \) such that \( \hat{\zeta}_{1}(I_{t_{1}}) = e \); next we
  apply the transfer with \( a = \hat{\zeta}_{1}(I_{t_{2}}) \) to this new pair at the index \( M(I_{t_{2}}) \) and get
  \( (\alpha_{2}, \zeta_{2}) \) with \( \hat{\zeta_{2}}(I_{t_{1}}) = e \) and \( \hat{\zeta_{2}}(I_{t_{2}}) = e \), etc.  It
  is important that at the second step we take \( a = \hat{\zeta}_{1}(I_{t_{2}}) \) as opposed to
  \( a = \hat{\zeta}(I_{t_{2}}) \), since these may not be equal when \( M(I_{t_{1}}) + 1 = m(I_{t_{2}}) \).  Once we get
  \( (\alpha_{m}, \zeta_{m}) \), we continue with nodes \( t \) such that \( h(t) = 1 \), again ordering them ``from left
  to right.''

  From item \eqref{item:strict-containment} of the definition of the evaluation forest it follows that transfers at
  nodes with higher values of \( h(t) \) do not ruin the equalities \( \hat{\zeta}_{k}(I_{s}) = e \) for nodes \( s \)
  with smaller \( h(s) \).

  Note that transfer operations within different trees commute with each other.  We continue the above process for all
  \( t \in \mathcal{F} \setminus \roots{\mathcal{F}} \), and let \( (\beta_{1}, \xi_{1}) \) be the resulting pair.  It
  satisfies \( \rho(\beta_{1}, \xi_{1}) = \rho(\alpha, \zeta) \), \( \mathcal{F} \) is a maximal evaluation forest for
  \( \xi_{1} \), and \( \hat{\xi}_{1}(I_{t}) = e \) for all \( t \not \in \roots{\mathcal{F}} \).  To achieve the
  latter equality for roots, we again apply the transfer.  Let \( \vn_{1}, \ldots, \vn_{p} \) be the list of roots of
  \( \mathcal{F} \).  As usually we assume that \( M(I_{\vn_{i}}) < m(I_{\vn_{i+1}}) \).  We let
  \( (\beta_{2}, \xi_{2}) \) be the transfer of \( (\beta_{1}, \xi_{1}) \) with
  \( a = \hat{\xi_{1}}(I_{\vn_{1}}) \) at \( i = M(I_{\vn_{1}}) \); let \( (\beta_{3}, \xi_{3}) \) be the
  transfer of \( (\beta_{2}, \xi_{2}) \) with \( a = \hat{\xi_{2}}(I_{\vn_{2}}) \) at \( i = M(I_{\vn_{2}}) \); etc.
  We continue this process until the penultimate root \( \vn_{p-1} \): the pair \( (\beta_{p}, \xi_{p}) \) is obtained
  from \( (\beta_{p-1}, \xi_{p-1}) \) by transfer with \( a = \hat{\xi}_{p-1}(I_{\vn_{p-1}}) \) at
  \( i = M(I_{\vn_{p-1}}) \).
  
  We set \( (\beta, \xi) \) to be the pair \( (\beta_{p}, \xi_{p}) \) and claim that it satisfies the conclusion
  of the lemma.  All the items follow immediately from the construction with one exception: we have to explain why is it
  the case that \( \hat{\xi}(I_{\vn_{p}}) = e \).  This follows from the observation that
  \[ \hat{\xi} = \hat{\xi}(I_{\vn_{1}}) \cdot \hat{\xi}(I_{\vn_{2}}) \cdots \hat{\xi}(I_{\vn_{p-1}}) \cdot
  \hat{\xi}(I_{\vn_{p}}) = \hat{\xi}(I_{\vn_{p}}), \]
  and from \( \hat{\xi} = e \), since \( \hat{\xi} = \hat{\zeta} = e \).
\end{proof}

\begin{remark}
  \label{rem:simplicity-of-pairs}
  In the context of the above lemma it follows that \( \xi(i) = e \) whenever \( \xi(i) \in A \), since \(
  \mathcal{F} \) is maximal and any \( i \) with \( \xi(i) \in A \) corresponds to an interval \( I_{t} = [i,i] \) for
  some \( t \in \mathcal{F} \) by Remark \ref{rem:elements-from-A-are-terminal-in-max-forests}.
\end{remark}

An \( f \)-pair \( (\alpha, \zeta) \) with a maximal evaluation forest \( \mathcal{F} \) for \( \zeta \) is said to be
\emph{simple\/} if for all \( t \in \mathcal{F} \) one has \( \hat{\zeta}(I_{t}) = e \).  Lemma
\ref{lem:existence-of-simple-pair-with-same-rho} then implies that for any \( f \in G*_{A} H \)
\[ \norm{f} = \inf \big\{\, \rho(\alpha, \zeta) \bigm| \textrm{\( (\alpha, \zeta) \) is a simple \( f \)-pair}
  \,\big\}. \]

\subsection{Symmetrization}
\label{sec:symmetrization}

Simple pairs are important, because they allow for the following symmetrization operation.  Let \( (\alpha,\zeta) \) be a
multipliable \( f \)-pair with an evaluation forest \( \mathcal{F} \) and let \( t \in \mathcal{F} \) be a node with
the reminder \( R_{t} \).  Let \( i_{1} < i_{2} < \cdots < i_{m} \) be some of the elements of this reminder \( i_{k}
\in R_{t} \) and suppose that:
\begin{itemize}
\item \( \zeta(i_{k}) \not \in A \) for all \( k \);
\item \( \zeta(j) = e \) for all \( j \in R_{t} \setminus \{i_{k}\}_{k=1}^{m} \);
\item \( \hat{\zeta}(I_{s}) = e \) for all immediate predecessors \( s \prec t\);
\item \( \hat{\zeta}(I_{t}) = e \).
\end{itemize}
A typical example of such a situation comes from a simple pair \( (\alpha, \zeta) \) with a maximal evaluation forest
\( \mathcal{F} \): for some \( t \in \mathcal{F} \) with \( |I_{t}| \ge 2 \) we may set
\( \{ i_{k} \}_{k=1}^{m} = R_{t} \).  Under these assumptions the symmetrization of \( (\alpha, \zeta) \) with respect
to \( \{ i_{k} \}_{k=1}^{m} \) and \( k_{0} \), \( 1 \le k_{0} \le m \), is the pair \( (\alpha, \xi) \), where
\( \xi \) is defined by
\begin{displaymath}
  \xi(i) =
  \begin{cases}
    \zeta(i) & \textrm{ if \( i \ne i_{k} \) for all \( k \)},\\
    \alpha(i) & \textrm{ if \( i = i_{k} \) for \( k \ne k_{0} \)},\\
    \alpha(i_{k_{0} - 1})^{-1} \cdots \alpha(i_{1})^{-1} \cdot \alpha(i_{m})^{-1} \cdots \alpha(i_{k_{0}+1})^{-1} &
    \textrm{ if \( i = i_{k_{0}} \)}.
  \end{cases}
\end{displaymath}
Schematically symmetrization is showed on the following diagram:
\begin{displaymath}
  \begin{array}{r@{\hspace{0.7cm}}ccccccccccccc}
           &  & \scriptstyle i_{1} &  & \scriptstyle i_{2} & & \scriptstyle i_{k_{0}-1} &  & \scriptstyle i_{k_{0}} & &
           \scriptstyle i_{k_{0}+1} & & \scriptstyle i_{n} & \\
  \alpha = & \cdots & g_{1} & \cdots & g_{2} & \cdots \cdots & g_{k_{0}-1} & \cdots & g_{k_{0}} & \cdots & g_{k_{0}+1} & \cdots
  \cdots & g_{n} & \cdots \\[2mm]
  \zeta    = & \cdots & * & \cdots & * & \cdots \cdots & * & \cdots & * & \cdots & * & \cdots
  \cdots & * & \cdots \\[2mm]
  & & & & & & & \multicolumn{1}{c}{\downarrow} & & & & &  &  \\[2mm]
  \xi  = & \cdots & g_{1} & \cdots & g_{2} & \cdots \cdots & g_{k_{0}-1} & \cdots & x & \cdots & g_{k_{0}+1} & \cdots
  \cdots & g_{n} & \cdots 
\end{array}  
\end{displaymath}
where \( x \) is such that
\[ \hat{\xi}(I_{t}) = \hat{\xi}(R_{t}) = g_{1} \cdots g_{k_{0}-1} x g_{k_{0}+1} \cdots g_{m} = e,\]
i.e., \( x = g_{k_{0} - 1}^{-1} \cdots g_{1}^{-1} \cdot g_{m}^{-1} \cdots g_{k_{0}+1}^{-1} \).

If \( (\alpha, \zeta) \) is a multipliable pair, \( t \in \mathcal{F} \), and the list \( i_{1} < \cdots < i_{m} \) of
elements in \( R_{t} \) satisfies the requirements for symmetrization, we call such a list \emph{symmetrization
  admissible}.

\begin{lemma}[cf.~Lemma 5.6 \cite{1111.1538}]
  \label{lem:symmetrization-preserves-rho}
  If \( (\alpha, \zeta) \) is a multipliable \( f \)-pair with an evaluation forest \( \mathcal{F} \), and
  \( (\alpha, \xi) \) is obtained from \( (\alpha,\zeta) \) by symmetrization according to a symmetrization admissible
  list \( \{i_{k}\}_{k=1}^{m} \), then \( (\alpha, \xi) \) is also a multipliable \( f \)-pair, \( \mathcal{F} \) is an
  evaluation forest for \( \xi \) and \( \rho(\alpha, \xi) \le \rho(\beta, \zeta) \).
\end{lemma}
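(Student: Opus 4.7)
The plan is to verify the three assertions in turn, exploiting the fact that passing from \( \zeta \) to \( \xi \) only alters the letters at the positions \( \{i_k\}_{k=1}^{m} \subseteq R_t \), while \( \alpha \) is left untouched; the definition of \( \xi(i_{k_0}) \) is designed precisely so that \( \hat{\xi}(R_t) = e \). For multipliability: when \( k \ne k_0 \) the letter \( \xi(i_k) = \alpha(i_k) \) lies in the same factor as \( \zeta(i_k) \) by multipliability of \( (\alpha, \zeta) \); and since \( \zeta(R_t) \) is multipliable, the \( \alpha(i_k) \) for \( k \ne k_0 \) all lie in a single factor, so their product \( \xi(i_{k_0}) \) does as well. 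Hence \( (\alpha, \xi) \) is multipliable and \( \xi(R_t) \) remains multipliable.

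Second, I would verify that \( \mathcal{F} \) is an evaluation forest for \( \xi \) by casework on the position of a node \( s \) relative to \( t \). If \( s \) is incomparable with \( t \) or \( s \prec t \), then \( I_s \cap \{i_k\} = \vn \) since \( \{i_k\} \subseteq R_t \), so \( \xi(I_s) = \zeta(I_s) \) and \( \xi(R_s) = \zeta(R_s) \), and everything is inherited from \( \zeta \). For \( s = t \), the assumption \( \hat{\zeta}(I_{s'}) = e \) for each immediate predecessor \( s' \prec t \) forces \( \hat{\xi}(I_t) \) to equal the ordered product of the \( \xi \)-letters in \( R_t \), namely \( \alpha(i_1) \cdots \alpha(i_{k_0-1}) \cdot \xi(i_{k_0}) \cdot \alpha(i_{k_0+1}) \cdots \alpha(i_m) = e \) by construction. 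For \( s \succ t \), I would split \( I_s \) as a concatenation of three subintervals with \( I_t \) in the middle; outside \( I_t \) the letters are unchanged, and \( \hat{\xi}(I_t) = e = \hat{\zeta}(I_t) \), so \( \hat{\xi}(I_s) = \hat{\zeta}(I_s) \in A \), while \( R_s \cap I_t = \vn \) gives \( \xi(R_s) = \zeta(R_s) \), still multipliable. Multiplying contributions at the roots of \( \mathcal{F} \) then yields \( \hat{\xi} = \hat{\zeta} = e \), so \( (\alpha, \xi) \) is a genuine \( f \)-pair.

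The only real work is the estimate \( \rho(\alpha, \xi) \le \rho(\alpha, \zeta) \), and only the position \( i_{k_0} \) is non-trivial: elsewhere the two words either coincide or the distance drops to zero since \( \xi(i_k) = \alpha(i_k) \) for \( k \ne k_0 \). At \( i_{k_0} \) I would use the trivial identity \( \zeta(i_1) \cdots \zeta(i_m) = e \) together with the definition of \( \xi(i_{k_0}) \) to obtain the parallel expressions
\[ \zeta(i_{k_0}) = \zeta(i_{k_0-1})^{-1} \cdots \zeta(i_1)^{-1} \cdot \zeta(i_m)^{-1} \cdots \zeta(i_{k_0+1})^{-1}, \quad \xi(i_{k_0}) = \alpha(i_{k_0-1})^{-1} \cdots \alpha(i_1)^{-1} \cdot \alpha(i_m)^{-1} \cdots \alpha(i_{k_0+1})^{-1}, \]
whose corresponding factors are pairwise within distance \( \rho(\alpha, \zeta) \). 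The ultrametric form of Proposition~\ref{prop:chracteristic-tsi-inequality-ultrametric} then yields \( d(\zeta(i_{k_0}), \xi(i_{k_0})) \le \rho(\alpha, \zeta) \), and combining with \( d(\alpha(i_{k_0}), \zeta(i_{k_0})) \le \rho(\alpha, \zeta) \) via the ultrametric triangle inequality gives the required bound. The main obstacle is precisely this last step: one must spot the cyclic rewriting of the trivial identity so that the two products align letter by letter for the TSI inequality; everything else is bookkeeping anchored in the assumptions packaged into symmetrization admissibility.
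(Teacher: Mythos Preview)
Your argument is correct and covers all three assertions, including the bookkeeping that the paper leaves implicit. The only place where your route diverges from the paper's is in the estimate at the index \( i_{k_{0}} \). You first rewrite \emph{both} \( \zeta(i_{k_{0}}) \) and \( \xi(i_{k_{0}}) \) as parallel products of \( m-1 \) factors, apply the ultrametric form of Proposition~\ref{prop:chracteristic-tsi-inequality-ultrametric} to bound \( d\bigl(\zeta(i_{k_{0}}),\xi(i_{k_{0}})\bigr) \), and then use the strong triangle inequality with \( d\bigl(\alpha(i_{k_{0}}),\zeta(i_{k_{0}})\bigr) \) to finish. The paper instead uses two-sided invariance in one step to rewrite
\[
d\bigl(\alpha(i_{k_{0}}),\xi(i_{k_{0}})\bigr)=d\bigl(\alpha(i_{1})\cdots\alpha(i_{m}),\,e\bigr)=d\bigl(\alpha(i_{1})\cdots\alpha(i_{m}),\,\zeta(i_{1})\cdots\zeta(i_{m})\bigr),
\]
and then applies Proposition~\ref{prop:chracteristic-tsi-inequality-ultrametric} once. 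Both routes rest on the same identity \( \zeta(i_{1})\cdots\zeta(i_{m})=e \) and the same invariance inequality; the paper's is one line shorter and avoids the detour through the triangle inequality, while yours makes the ``alignment'' of the two products more explicit.
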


\begin{proof}\belowdisplayskip=-30pt
  The proof follows from the following calculations:
    \begin{align*}
      d \bigl(\alpha(i_{k_{0}}), x \bigr) &= d \bigl( \alpha(i_{k_{0}}), \alpha(i_{k_{0} - 1})^{-1} \cdots
      \alpha(i_{1})^{-1} \cdot \alpha(i_{m})^{-1} \cdots \alpha(i_{k_{0}+1})^{-1} \bigr) \\
      &= \hbox to 245.67276pt{\( d \bigl( \alpha(i_{1}) \cdots \alpha(i_{n}), e \bigr) \hfil \textrm{[\,by the two-sided
          invariance of \( d \)\,]} \)}\\
      &= \hbox to 245.67276pt{\( d \bigl( \alpha(i_{1}) \cdots \alpha(i_{n}), \zeta(i_{1}) \cdots \zeta(i_{n}) \bigr) \hfil
        \textrm{[\,since \( \hat{\zeta}(R_{t}) = e \)\,]} \)}\\
      &\le \hbox to 245.67276pt{\( \max\limits_{k \le n} d \bigl( \alpha(i_{k}), \zeta(i_{k}) \bigr) \hfil \textrm{[\,by
          Proposition \ref{prop:chracteristic-tsi-inequality-ultrametric}\,]}
        \)}\\
     \end{align*}\qedhere
\end{proof}

We say that a simple \( f \)-pair with a maximal evaluation forest \( (\alpha, \zeta) \) is \emph{reduced\/} if
\( \alpha \) is a reduced form of \( f \).  Note that when \( A \ne \{e\} \), the reduced form of an element is not
unique, but the length of the reduced form is nevertheless well-defined.

\begin{lemma}[cf.~Lemma 5.8 \cite{1111.1538}]
  \label{lem:norm--reduced-pairs}
  For any \( f \in G*_{A}H \)
  \[ \norm{f} = \inf \bigl\{\, \rho(\alpha, \zeta) \bigm| \textrm{\( (\alpha, \zeta) \) is a reduced \( f \)-pair}
  \,\bigr\}. \]  
\end{lemma}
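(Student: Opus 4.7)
The inequality $\norm{f} \le \inf\{\rho(\alpha, \zeta) : (\alpha, \zeta) \text{ is a reduced } f\text{-pair}\}$ is immediate from the definition of $\norm{}$, since reduced pairs form a subclass of the pairs used to compute it. For the reverse inequality, by Lemma~\ref{lem:existence-of-simple-pair-with-same-rho} it suffices to show that every simple $f$-pair $(\alpha, \zeta)$ can be replaced by a reduced $f$-pair with $\rho$ no larger. The plan is to argue by induction on $|\alpha|$: if $\alpha$ is already a reduced form of $f$, we are done; otherwise, we exhibit a simple $f$-pair of strictly smaller length without increasing $\rho$, and invoke the inductive hypothesis.

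When $\alpha$ is not reduced, either some $\alpha(i) = e$, or some consecutive pair $\alpha(i), \alpha(i+1)$ lies in a common factor. In the \emph{favorable} sub-case --- namely $\zeta(i) = e$ in the first scenario, or $\zeta(i), \zeta(i+1)$ also in a common factor in the second --- we construct a shorter multipliable $f$-pair $(\alpha', \zeta')$ either by deleting position $i$, or by replacing positions $i$ and $i+1$ with the product letters $\alpha(i)\alpha(i+1)$ and $\zeta(i)\zeta(i+1)$. The two-sided invariance inequality of Proposition~\ref{prop:chracteristic-tsi-inequality-ultrametric} gives
\[ d\bigl(\alpha(i)\alpha(i+1), \zeta(i)\zeta(i+1)\bigr) \le \max\bigl\{d(\alpha(i), \zeta(i)),\, d(\alpha(i+1), \zeta(i+1))\bigr\}, \]
so $\rho(\alpha', \zeta') \le \rho(\alpha, \zeta)$. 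Proposition~\ref{prop:existence-of-maximal-forest} supplies a maximal evaluation forest for $\zeta'$, and a subsequent application of Lemma~\ref{lem:existence-of-simple-pair-with-same-rho} returns a simple pair of the desired shorter length.

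The main obstacle is the \emph{unfavorable} sub-case, in which the $\alpha$-letters to be combined or deleted have counterparts in $\zeta$ lying in a different factor; such an incompatibility forces at least one of the relevant $\alpha$-letters to lie in $A$. To resolve it, I would apply the symmetrization of Lemma~\ref{lem:symmetrization-preserves-rho} to the smallest node $t \in \mathcal{F}$ whose remainder $R_t$ contains the troublesome index $i$: the positions of $R_t$ at which $\zeta$ takes values outside $A$ form a symmetrization-admissible list by simplicity of the pair together with Remark~\ref{rem:simplicity-of-pairs} (which ensures that every other position $j \in R_t$ satisfies $\zeta(j) = e$), and choosing the distinguished index $k_0$ different from the index corresponding to $i$ forces the post-symmetrization word $\xi$ to satisfy $\xi(i) = \alpha(i)$ without increasing $\rho$. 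This aligns $\xi$ and $\alpha$ at position $i$, and after re-establishing maximality of the forest and simplicity of the pair via Proposition~\ref{prop:existence-of-maximal-forest} and Lemma~\ref{lem:existence-of-simple-pair-with-same-rho}, we are in the favorable sub-case. Keeping track of simplicity and the forest structure through these manipulations --- particularly when the incompatibility straddles the boundary between two adjacent remainders of $\mathcal{F}$, so that the transfer operation must be combined with symmetrization --- is the main technical difficulty, and it mirrors the analogous bookkeeping of \cite{1111.1538}.
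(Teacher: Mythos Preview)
Your proposal is correct and follows essentially the same route as the paper: induct on length, shorten directly in the favorable case using Proposition~\ref{prop:chracteristic-tsi-inequality-ultrametric}, and in the unfavorable case locate the node $t$ with $i \in R_{t}$ and symmetrize $R_{t}$ at some index $\ne i$ so that $\xi(i) = \alpha(i) \in A$, which places you back in the favorable case.

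Two small points where the paper is cleaner than your sketch. First, once symmetrization gives $\xi(i) = \alpha(i) \in A$, the four letters $\alpha(i), \alpha(i+1), \xi(i), \xi(i+1)$ are already pairwise multipliable (the two at position $i$ lie in $A$), so you can shorten \emph{immediately}; there is no need to re-establish maximality of the forest or simplicity of the pair before shortening. Your detour through Proposition~\ref{prop:existence-of-maximal-forest} and Lemma~\ref{lem:existence-of-simple-pair-with-same-rho} works, but is unnecessary. Second, your closing remark about the incompatibility ``straddling the boundary between two adjacent remainders'' and needing to ``combine transfer with symmetrization'' anticipates a complication that does not arise: it does not matter whether $i+1$ lies in $R_{t}$ or in some other remainder, since after symmetrization $\xi(i) \in A$ is multipliable with $\xi(i+1)$ regardless. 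No transfer step is needed in this lemma. The only edge case to note explicitly is $|R_{t}| = 1$, which forces $I_{t} = [i,i]$ and hence $\zeta(i) = e$ by simplicity, so one is already in the favorable case.
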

\begin{proof}
  We start with an observation.  Let \( (\alpha, \beta) \) be a multipliable \( f \)-pair and suppose that there is an
  index \( i \) such that letters \( \alpha(i) \), \( \alpha(i+1) \), \( \zeta(i) \) and \( \zeta(i+1) \) are pairwise
  multipliable.  We may shorten the pair \( (\alpha, \zeta) \) by considering the products \( \alpha(i) \cdot \alpha(i+1)
  \) and \( \zeta(i) \cdot \zeta(i+1) \) as single letters.  More formally, we let the word \( \beta \) to be defined by
  \begin{displaymath}
    \beta(j) =
    \begin{cases}
      \alpha(j) & \textrm{if \( j < i \)},\\
      \alpha(i) \cdot \alpha(i+1) & \textrm{if \( j = i \)},\\
      \alpha(j+1) & \textrm{if \( j > i \)}.
    \end{cases}
  \end{displaymath}
  The word \( \xi \) is defined similarly using \( \zeta \) instead of \( \alpha \).  The pair
  \( (\beta, \xi) \) is also a multipliable \( f \)-pair, \( |\beta| < |\alpha| \), and
  \( \rho(\beta, \xi) \le \rho(\alpha, \zeta) \), since by Proposition
  \ref{prop:chracteristic-tsi-inequality-ultrametric}
  \[ d \bigl( \alpha(i) \cdot \alpha(i+1), \zeta(i) \cdot \zeta(i+1) \bigr) \le \max \bigl\{ d \bigl( \alpha(i), \zeta(i)
  \bigr), d \bigl(\alpha(i+1), \zeta(i+1) \bigr) \bigr\}.\]

  Note that a word \( \alpha \in \word{G \cup H} \) with \( \hat{\alpha} = f \) is a reduced form of \( f \) if and only
  if \( \alpha \) is the shortest word that evaluates to \( f \): if \( \alpha_{1} \in \word{G \cup H} \) is such that
  \( \hat{\alpha}_{1} = f \), then \( |\alpha_{1}| \ge |\alpha| \).  Based on this Lemma
  \ref{lem:existence-of-simple-pair-with-same-rho} implies that if \( (\alpha, \zeta) \) is a multipliable \( f \)-pair
  in which \( \alpha \) is a reduced form of \( f \), then there exists a \emph{simple\/} \( f \)-pair
  \( (\beta,\xi) \) such that \( \rho(\beta, \xi) \le \rho(\alpha, \zeta) \) and \( |\beta| = |\alpha| \), i.e.,
  \( (\beta, \xi) \) is a reduced \( f \)-pair.  Hence to prove the lemma it is enough to show that for any
  non-reduced simple \( f \)-pair \( (\alpha, \beta) \) there is a multipliable \( f \)-pair \( (\beta, \xi) \) such
  that \( \rho(\beta, \xi) \le \rho(\alpha, \zeta) \) and \( |\beta| < |\alpha| \).

  Pick a non-reduced simple \( f \)-pair \( (\alpha, \zeta) \).  If \( i \) such that \( \alpha(i) \) and
  \( \alpha(i+1) \) are multipliable and \( \alpha(i), \alpha(i+1) \not \in A \), then
  \( \alpha(i), \alpha(i+1), \zeta(i)\), and \( \zeta(i+1) \) are pairwise multipliable and we may shorten the pair by our
  observation above.  We therefore need to consider the case \( \alpha(i) \in A \) for some \( i \).  Let
  \( t \in \mathcal{F} \) be such that \( i \in R_{t} \).

  In a typical situation \( |R_{t}| \ge 2 \) and we may choose
  \( j \in R_{t} \) such that \( j \ne i \).  Let \( (\alpha, \xi) \) be the symmetrization of \( (\alpha,\zeta) \)
  according to \( R_{t} \) at \( j \).  By Lemma \ref{lem:symmetrization-preserves-rho}
  \( \rho(\alpha, \xi) \le \rho(\alpha, \zeta) \) and also \( \xi(i) = \alpha(i) \in A \).  Since
  \( (\alpha, \xi) \) is also a multipliable \( f \)-pair, all the elements \( \alpha(i) \), \( \alpha(i+1) \),
  \( \xi(i) \), and \( \xi(i+1) \) are pairwise multipliable, and we may finish the proof as before by
  shortening the pair \( (\alpha, \xi) \).

  Finally, if \( |R_{t}| = 1 \), then \( R_{t} = I_{t} = [i,i] \), hence \( \zeta(i) = e \), and again \( \alpha(i) \),
  \( \alpha(i+1) \), \( \zeta(i) \), and \( \zeta(i+1) \) must be pairwise multipliable.
\end{proof}

\begin{theorem}[cf.~Proposition 5.9 and Theorem 5.10 \cite{1111.1538}]
  \label{thm:Graev-ultrametric}
  The function \( \delta \) is a two-sided invariant ultrametric on \( G*_{A}H \).  Moreover, \( \delta \) extends \( d
  \) on \( G \cup H \).
\end{theorem}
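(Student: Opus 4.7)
The plan is to combine Proposition~\ref{prop:delta-is-pseudo-ultrametric}, which already provides the two-sided invariant pseudo-ultrametric structure on $G*_A H$, with Lemma~\ref{lem:norm--reduced-pairs}, which identifies $\|f\|=\delta(f,e)$ with the infimum of $\rho(\alpha,\zeta)$ over reduced $f$-pairs. What remains is (a) the extension claim $\delta|_{G\cup H}=d$ and (b) positivity $\delta(f,e)>0$ for $f\ne e$.

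For (a), left invariance gives $\delta(x,y)=\|x^{-1}y\|$, to which I apply Lemma~\ref{lem:norm--reduced-pairs}. When $x,y$ lie in a common factor (which also covers every case in which at least one of them belongs to $A$), $x^{-1}y$ has reduced length at most $1$ and the only reduced pair is $((x^{-1}y),(e))$, giving $\rho=d(x,y)$ directly. When $x\in G\setminus A$ and $y\in H\setminus A$, $x^{-1}y$ has reduced length~$2$; multipliability of $\zeta$ with an alternating $\alpha$ together with $\hat\zeta=e$ forces $\zeta=(b,b^{-1})$ with $b\in A$ while $\alpha=(x^{-1}a^{-1},ay)$ with $a\in A$. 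Applying two-sided invariance of $d_G,d_H$ and the fact that inversion is an isometry of each factor, $\rho(\alpha,\zeta)$ rewrites as $\max\{d_G(x,c),d_H(y,c)\}$ with $c=a^{-1}b^{-1}\in A$; infimizing over $c\in A$ recovers the amalgam formula for $d(x,y)$.

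For (b), fix $f\ne e$ and any reduced form $f=a_0g_1\cdots g_n$, with $g_i\in(G\cup H)\setminus A$ alternating between factors; set
\begin{displaymath}
  \epsilon(f):=\min_{1\le i\le n}d(g_i,A)
\end{displaymath}
(or $\epsilon(f):=d(a_0,e)$ if $n=0$, i.e., $f\in A\setminus\{e\}$). Two-sided invariance of $d_G,d_H$ makes $\epsilon(f)$ independent of the chosen reduced form, and closedness of $A$ in $G$ and $H$ makes it strictly positive. I claim $\|f\|\ge\epsilon(f)$. Let $(\alpha,\zeta)$ be any reduced $f$-pair. Writing $\alpha(i)=a_{i-1}g_ia_i^{-1}$ with $a_i\in A$, two-sided invariance yields $d(\alpha(i),A)=d(g_i,A)$. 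Since $\alpha$ alternates between factors and $\zeta$ is multipliable with $\alpha$, so does $\zeta$; were every letter of $\zeta$ outside $A$, then $\zeta$ itself would be a reduced form of length $n\ge 1$, forcing $\hat\zeta\ne e$ and contradicting $\hat\zeta=e$. Hence some letter $\zeta(i)\in A$, and for that index
\begin{displaymath}
  d\bigl(\alpha(i),\zeta(i)\bigr)\ge d\bigl(\alpha(i),A\bigr)=d(g_i,A)\ge\epsilon(f).
\end{displaymath}
Therefore $\rho(\alpha,\zeta)\ge\epsilon(f)$, and taking the infimum over reduced pairs yields $\|f\|\ge\epsilon(f)>0$.

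The only slightly delicate step is the rigidity observation that a multipliable pair $(\alpha,\zeta)$ with $\alpha$ reduced and $\hat\zeta=e$ must have some letter of $\zeta$ in $A$; this is essentially the normal-form theorem for amalgamated products. All the evaluation-forest machinery from the preceding subsection participates only indirectly, through Lemma~\ref{lem:norm--reduced-pairs}, which permits the reduction of the original infimum to reduced $f$-pairs, where the above rigidity becomes available.
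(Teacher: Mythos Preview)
Your proof is correct and follows the same overall strategy as the paper: invoke Proposition~\ref{prop:delta-is-pseudo-ultrametric} for the pseudo-ultrametric structure, use Lemma~\ref{lem:norm--reduced-pairs} to restrict to reduced $f$-pairs, and then handle extension and positivity separately.

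There is one genuine difference worth noting. In the positivity step, the paper locates an index $j$ with $\zeta(j)\in A$ by picking a \emph{leaf} $t$ of a maximal evaluation forest for $\zeta$: since $\zeta(I_t)$ is multipliable while $\alpha$ alternates, two consecutive letters of $\zeta$ in $I_t$ lie in the same factor yet are multipliable with letters of $\alpha$ from different factors, forcing one of them into $A$. Your argument bypasses the forest entirely at this point and appeals directly to the normal form theorem for amalgamated products: if every $\zeta(i)\notin A$, then multipliability with the alternating $\alpha$ makes $\zeta$ itself alternate in $(G\setminus A)\cup(H\setminus A)$, so $\zeta$ is reduced and $\hat\zeta\ne e$. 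This is a legitimate and slightly more elementary route; the forest machinery is then used only implicitly, via Lemma~\ref{lem:norm--reduced-pairs}, exactly as you observe in your final paragraph.

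One minor imprecision: in the cross case of the extension argument you describe reduced pairs as having $\zeta=(b,b^{-1})$ with $b\in A$. In fact simplicity (via Remark~\ref{rem:simplicity-of-pairs}) forces $b=e$, which is what the paper uses. This does not affect your conclusion, since the infimum over your larger family of pairs is sandwiched between $\norm{x^{-1}y}$ and the infimum over genuine reduced pairs, and your computation shows all three equal $d(x,y)$; but it would be cleaner to note that $\zeta=(e,e)$ and vary only $a\in A$.
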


\begin{proof}
  By Lemma \ref{lem:norm--reduced-pairs} we have
  \[ \norm{f} = \inf \bigl\{\, \rho(\alpha, \zeta) \bigm| \textrm{\( (\alpha, \zeta) \) is a reduced \( f \)-pair}
  \,\bigr\}. \]
  First we show that \( \delta \) extends \( d \).  If \( f \in G \cup H \), then the unique reduced \( f
  \)-pair is the pair \( (f,e) \), whence
  \[ \delta(f,e) = \norm{f} = d(f,e). \]
  If \( g \in G \) and \( h \in H \), then reduced \( gh^{-1} \)-pairs are of the form
  \( (g_{1} \ h_{1}^{-1}, e \ e) \), where \( g_{1} = g \cdot a \) and \( h_{1}^{-1} = a^{-1} \cdot h^{-1} \) for some
  \( a \in A \).  Therefore
  \[ d(g_{1}, e) = d\bigl( g, a^{-1} \bigr), \quad d\bigl( h_{1}^{-1}, e \bigr) = d\bigl(h^{-1}, a \bigl) =
  d\bigl(a^{-1}, h\bigl). \]
  Since \( d \) by definition is the ultrametric amalgam of metrics \( d_{G} \) and \( d_{H} \) on \( A \), it
  follows that \( \delta(g,h) = d(g,h) \).  Thus \( \delta \) extends \( d \) on \( G \cup H \).

  We show that \( \norm{f} > 0 \) for any \( f \ne e \).  Since we already know that \( \delta \) extends \( d \), it
  is enough to consider the case \( f \not \in A \).  Pick a reduced form \( \alpha_{0} \) of \( f \) and let
  \( \epsilon \) be such that \( d\bigl( \alpha_{0}(i), A \bigr) \ge \epsilon > 0 \) for all \( i \) (here we use that
  \( A \) is closed in both \( G \) and \( H \)).  Note that if \( \alpha \) is any other reduced form of
  \( f \), then \( |\alpha| = |\alpha_{0}| \) and \( A \alpha(i) A = A \alpha_{0}(i) A \) for all \( i \).

  So let \( (\alpha, \zeta) \) be any reduced \( f \)-pair, and let \( \mathcal{F} \) be an evaluation forest for
  \( \zeta \).  Pick a leaf \( t \in \mathcal{F} \).  The subword \( \zeta(I_{t}) \) is multipliable.  Since
  \( \alpha(i) \) is multipliable with \( \zeta(i) \) for all \( i \), and since \( \alpha(i) \) is \emph{not\/}
  multipliable with \( \alpha(i+1) \) (because \( \alpha \) is reduced), we get that either
  \( \zeta\bigl(m(I_{t})\bigr) \in A \), or \( \zeta\bigl(m(I_{t}) + 1\bigr) \in A \).  In any case, there is an index
  \( j \) such that \( \zeta(j) \in A \).  This shows that
  \begin{displaymath}
    \rho(\alpha, \zeta) = \max_{i \le |\alpha|} \bigl\{ d\bigl( \alpha(i), \zeta(i) \bigr) \bigr\} \ge d\bigl( \alpha(j),
    \zeta(j) \bigr) \ge
    d \bigl( \alpha(j), A \bigr)  = d\bigl( \alpha_{0}(j), A \bigr) \ge \epsilon > 0.
  \end{displaymath}
  And therefore also \( \norm{f} \ge \epsilon \).  This proves that \( \norm{} \) is a genuine ultranorm on \( G*_{A}H
  \).
\end{proof}

\begin{remark}
  \label{rem:free-products-of-infinitely-many-group}
  The above result is valid for any number of factors: if \( (G_{\lambda}, d_{\lambda})_{\lambda \in \Lambda} \) is a
  family of ultrametric groups with two-sided invariant ultrametrics \( d_{\lambda} \), \( A \) is a common closed
  subgroup of the groups \( G_{\lambda} \), metrics \( d_{\lambda} \) agree on \( A \), then one can define in a
  similar way a two-sided invariant Graev ultrametric \( \delta \) on the free product \( \Ast_{A} G_{\lambda} \) over
  all \( \lambda \in \Lambda \), which extends metrics \( d_{\lambda} \).
\end{remark}

\subsection{Graev ultrametrics on HNN extensions}
\label{sec:graev-ultrametrics-hnn}

Let \( G \) be a group, \( A, B < G \) be its subgroups, and \( \phi : A \to B \) be an isomorphism.  One way to
construction the HNN extension of \( (G,\phi) \) is as follows.  We start with free products \( G*\langle u \rangle \)
and \( G* \langle v \rangle \), where \( \langle u \rangle \) and \( \langle v \rangle \) are free groups on one
generator.  The map \( \phi \) gives rise to an isomorphism \( G*uAu^{-1} \to G*vBv^{-1} \).  Let
\( \widetilde{H} \) be the amalgam of the groups \( G*\langle u \rangle \) and \( G*\langle v \rangle \) over the
subgroups \( G*uAu^{-1} \) and \( G*vBv^{-1} \) (which are canonically isomorphic to \( \langle G, uAu^{-1} \rangle \)
and \( \langle G, vBv^{-1} \rangle \) respectively).  The HNN extension of \( (G,\phi) \) is the subgroups of
\( \widetilde{H} \) generated by \( G \) and the element \( v^{-1} u \), called the \emph{stable letter} of the HNN
extension.

Our goal is to carry this construction in the setting of ultrametric groups.  This is done exactly as in \cite[Section 8
and 9]{1111.1538} with substituting the max operation for the operation of summation.  We therefore only state the main
lemmas and give references for their proofs in the metric setting.

Let \( (G,d\,) \) be an ultrametric group with a two-sided invariant metric \( d \), let \( A \) be a closed subgroup of
\( G \).  We consider the free product \( G * \langle u \rangle \).  To overload notations, let \( d \) denote also the
natural metric on \( \langle u \rangle \): \( d(u^{m}, u^{n}) = |m - n| \).  The subgroup
\( \bigl\langle G, u A u^{-1} \bigr\rangle \) of the free product \( G * \langle u \rangle \) is isomorphic to
\( G * uAu^{-1} \), and therefore has two natural metrics: the Graev ultrametric on the free product \( G * uAu^{-1} \)
and the metric induced from the Graev ultrametric on \( G* \langle u \rangle \).  We show that these two metrics
coincide if and only if the subgroup \( A \) has diameter at most \( 1 \).

Let \( \delta \) be the Graev ultrametric on \( G * \langle u \rangle \), and let \( f \in G* uAu^{-1} \), which we view
as a subgroup of \( G * \langle u \rangle \).

\begin{definition}
  \label{def:hereditary-f-pair}
  A multipliable \( f \)-pair \( (\alpha, \zeta) \) is said to be \emph{hereditary\/} if \( \zeta(i) = \alpha(i) \)
  whenever \( \zeta(i) \in \langle u \rangle \setminus \{e\} \).
\end{definition}

\begin{lemma}[cf.~Lemma 8.2 \cite{1111.1538}]
  \label{lem:reduction-hereditary}
  For any multipliable \( f \)-pair \( (\alpha, \zeta) \) there exists a word \( \xi \) such that the pair
  \( (\alpha, \xi) \) is a hereditary \( f \)-pair and \( \rho(\alpha, \xi) \le \rho(\alpha, \zeta) \).
\end{lemma}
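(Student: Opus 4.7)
The plan is to construct $\xi$ by modifying $\zeta$ within each maximal $\langle u\rangle$-block, exploiting the abelianness of $\langle u\rangle$ to redistribute $u$-exponents while preserving $\hat{\zeta}$, multipliability, and the bound on $\rho$. By multipliability, the indices $[1,|\alpha|]$ partition into maximal blocks in which both $\alpha$ and $\zeta$ take values in $G$ or both take values in $\langle u\rangle$. I would set $\xi(i) = \zeta(i)$ at every $G$-position. Inside each maximal $\langle u\rangle$-block $B$, writing $\zeta(i) = u^{n_i}$ and $\alpha(i) = u^{a_i}$ for $i \in B$, the aim is to force $\xi(i) = \alpha(i)$ at every position with $n_i \ne 0$. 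This is accomplished by a sequence of one-sided transfers $\zeta(i) \leftarrow \zeta(i) u^{k}$, $\zeta(i+1) \leftarrow u^{-k} \zeta(i+1)$, which, because $\langle u\rangle$ is abelian, preserve both $\hat{\zeta}$ and multipliability. The transfers are ordered so that the accumulating residual exponent is pushed to a ``free'' position of $B$ (one where $n_i=0$), and at each step the parameter $k$ is chosen so that neither of the two new distances exceeds the current maximum on the block.

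The main obstacle is the degenerate case in which a maximal union $S$ of $\langle u\rangle$-blocks of $\zeta$, connected across $G$-blocks whose $\zeta$-product is $e$, contains no free position. In that situation the forced values already determine $\sum_{i\in S} a_i$ as the total $u$-exponent of $\xi$ on $S$, and one needs the identity $\sum_{i\in S} a_i = \sum_{i\in S} n_i$ to hold automatically. This identity follows from the uniqueness of the normal form of $f = \hat{\alpha} = \hat{\zeta}$ in $G*\langle u\rangle$: the $G$-blocks of $\zeta$ bounding $S$ have nontrivial product and contribute genuine $G$-letters to $f$'s normal form, so no $G$-block of $\alpha$ interior to the span of $S$ can contribute a nontrivial $G$-letter in $\hat{\alpha}$'s reduction; hence $\alpha$ reduces on $S$ to a single $u$-letter, which must equal the $u$-letter contributed by $S$ in $\zeta$'s reduction.

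With these ingredients the three required properties of $(\alpha,\xi)$ are routine: multipliability, because $\xi$ inherits the block-type pattern of $\zeta$; hereditariness, by the treatment of forced positions; and $\rho(\alpha,\xi) \le \rho(\alpha,\zeta)$, because $G$-positions are unchanged, forced $\langle u\rangle$-positions contribute zero distance, and at each reservoir position the chosen value differs from $\alpha$ only by the telescoped residual, which the order of transfers keeps bounded by $\rho(\alpha,\zeta)$. The technical heart of the proof is thus the control of $k$ at each transfer step and the normal-form argument for the degenerate super-block case, mirroring the metric-case argument of Lemma~8.2 in \cite{1111.1538} with summation replaced by maximum throughout.
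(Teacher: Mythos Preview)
Your approach has a structural flaw that makes it unworkable: you commit to setting \( \xi(i) = \zeta(i) \) at every \( G \)-position, but the hereditary \( \xi \) guaranteed by the lemma may need to differ from \( \zeta \) at \( G \)-positions as well.  Here is a concrete instance.  Take \( A = \{e\} \) (so \( f \in G*uAu^{-1} = G \)), pick \( g \in G \setminus \{e\} \) with \( d(g,e) \le 2 \), and consider
\[
  \alpha = u^{3}\cdot u^{-1}\cdot e\cdot u^{-1}\cdot u^{-1}\cdot e,
  \qquad
  \zeta = u\cdot u^{-1}\cdot g\cdot u\cdot u^{-1}\cdot g^{-1}.
\]
Then \( \hat\zeta = e \), \( \hat\alpha = e = f \in G \), the pair is multipliable, and \( \rho(\alpha,\zeta) = \max\{2,\,d(e,g)\} = 2 \).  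If you insist on \( \xi(3) = g \) and \( \xi(6) = g^{-1} \), then \( \hat\xi = e \) together with \( g \ne e \) forces \( \xi(1)\xi(2) = e \) and \( \xi(4)\xi(5) = e \); heredity then leaves only \( \xi = e\cdot e\cdot g\cdot e\cdot e\cdot g^{-1} \), for which \( d(\alpha(1),\xi(1)) = d(u^{3},e) = 3 > 2 \).  So no hereditary \( \xi \) exists under your constraint, whereas \( \xi = \alpha \) itself is hereditary with \( \rho(\alpha,\xi) = 0 \).  The point is that the correct \( \xi \) replaces \( \zeta(3) = g \) by \( e \), which your scheme forbids.

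Two secondary issues compound this.  First, your ``degenerate case'' argument invokes the normal form of \( f = \hat\alpha = \hat\zeta \), but \( \hat\zeta = e \) by definition of an \( f \)-pair; the normal-form comparison you describe does not apply, and indeed the claimed identity \( \sum_{i\in S} a_{i} = \sum_{i\in S} n_{i} \) fails in the example above (on \( S = \{1,2\} \) one gets \( 2 \ne 0 \)).  Second, even in the non-degenerate case your transfers push a residual exponent \( r \) to a ``free'' position \( j \), but you never verify that \( u^{r} \in \{e,\alpha(j)\} \); if \( r \notin \{0,a_{j}\} \) the resulting \( \xi \) is not hereditary.  The paper does not supply its own proof here, deferring to the metric argument in \cite{1111.1538}; that argument works through the evaluation forest of \( \zeta \) and does not freeze the \( G \)-letters, so you will need to rethink the construction along those lines.
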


\begin{remark}
  \label{rem:structure-of-forests-for-hereditary-pairs}
  Let \( (\alpha, \zeta) \) be a hereditary pair, and let \( \mathcal{F} \) be a maximal evaluation forest for
  \( \zeta \).  If \( i \) is such that \( \zeta(i) \in \langle u \rangle \setminus \{ e \} \) and \( i \in R_{t} \),
  \( t \in \mathcal{F} \), then \( |R_{t}| = 2 \).
\end{remark}

\begin{definition}
  \label{def:regid-pair}
  A hereditary \( f \)-pair \( (\alpha, \zeta) \) is said to be \emph{rigid\/} if for each \( i \) the equality
  \( \alpha(i) = u^{\pm 1} \) implies \( \zeta(i) = \alpha(i) \).
\end{definition}

\begin{lemma}[cf.~Lemma 8.15 \cite{1111.1538}]
  \label{lem:reduction-rigid}
  Let \( (\alpha, \zeta) \) be a hereditary \( f \)-pair, and suppose that \( \alpha \) is the reduced form of \( f \).
  There exists a word \( \xi \) such that the pair \( (\alpha, \xi) \) is a rigid \( f \)-pair,
  \( \rho(\alpha, \xi) \le \rho(\alpha, \zeta) \), and moreover if
  \( \alpha(i) = u \), then \( \xi(i+1) \in A \).
\end{lemma}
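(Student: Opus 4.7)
My plan is to imitate the metric proof of \cite[Lemma~8.15]{1111.1538}, substituting the maximum for the sum wherever the triangle inequality of Proposition~\ref{prop:chracteristic-tsi-inequality-ultrametric} is invoked. The crucial structural observation is that since \( \alpha \) is the reduced form of \( f \in G * uAu^{-1} \) inside \( G * \langle u \rangle \), every occurrence of \( u^{\pm 1} \) in \( \alpha \) sits inside a block \( \alpha(i) = u \), \( \alpha(i+1) \in A \setminus \{e\} \), \( \alpha(i+2) = u^{-1} \); call the list of \( u \)-positions \( i_1 < i_2 < \cdots < i_k \). Combining this with heredity, the only positions at which rigidity can fail are indices \( i \) with \( \alpha(i) = u^{\pm 1} \) and \( \zeta(i) = e \): at every other index, either \( \alpha(i) \in G \) (no constraint) or \( \zeta(i) = \alpha(i) \in \langle u \rangle \setminus \{e\} \) (already rigid).

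First I would apply a variant of Lemma~\ref{lem:existence-of-simple-pair-with-same-rho} to replace \( (\alpha, \zeta) \) by a simple \( f \)-pair with the same \( \rho \); this preserves heredity because the transfer operation only shifts \( A \)-elements between adjacent positions, leaving untouched any letter that is a nontrivial power of \( u \). We then have a maximal evaluation forest \( \mathcal{F} \) for \( \zeta \) in which, by Remark~\ref{rem:structure-of-forests-for-hereditary-pairs}, each \( u \)-letter of \( \zeta \) lies in a remainder of size exactly two, containing one \( u \) and one \( u^{-1} \). So \( \mathcal{F} \) provides a natural pairing of the \( u \)-letters of \( \zeta \), parallel to the block pairing of \( \alpha \). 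The heart of the argument is then to apply the symmetrization operation of Lemma~\ref{lem:symmetrization-preserves-rho} repeatedly — working outwards-in along \( \mathcal{F} \), so that fixes of inner blocks are not disturbed by outer ones — to slide the two \( u \)-letters in each remainder to their prescribed positions \( i_j \) and \( i_j + 2 \). Each symmetrization permutes the remainder while keeping \( \hat\xi(R_t) = e \) and, by Lemma~\ref{lem:symmetrization-preserves-rho}, cannot increase \( \rho \). Once every \( u \)-letter of \( \zeta \) sits in the correct position, heredity forces the correct sign, so the resulting pair \( (\alpha, \xi_0) \) is rigid; a concluding transfer inside each block collapses the product over the children \( \bigcup_{s \prec t} I_s \) — which lies in \( A \) — to a single letter placed at \( i_j + 1 \), producing the desired \( \xi \) with \( \xi(i_j + 1) \in A \).

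The main obstacle is verifying that the symmetrizations can realise the required bijection between \( \alpha \)'s block-pairs and \( \zeta \)'s remainder-pairs. Both pairings are noncrossing — the former because \( \alpha \) is reduced in \( G * uAu^{-1} \), the latter because of the forest structure of \( \mathcal{F} \) — and the point is to show that any two noncrossing matchings of compatible size can be transformed into each other by a sequence of intra-remainder symmetrizations carried out in a forest-consistent order. The ultrametric nature of \( \rho \) is essential here: because \( \rho \) is a maximum, each symmetrization replaces one maximum by one no larger than the old one, so arbitrarily many such moves still give \( \rho(\alpha, \xi) \le \rho(\alpha, \zeta) \); the cumulative-error phenomenon that complicates the summation version does not arise.
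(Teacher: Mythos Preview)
The paper gives no proof of this lemma: it is one of several results in Subsection~\ref{sec:graev-ultrametrics-hnn} that are stated and then explicitly deferred to their metric analogues in \cite{1111.1538}, with the single instruction to replace sums by maxima. Your overall plan --- imitate \cite[Lemma~8.15]{1111.1538} and make this substitution --- is therefore exactly what the paper prescribes, and at that level the approaches coincide.

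Your more detailed outline, however, misdescribes the mechanism. Symmetrization (Lemma~\ref{lem:symmetrization-preserves-rho}) does not ``slide'' letters between positions; it overwrites the letters of a fixed remainder \( R_{t} \) by the corresponding \( \alpha \)-letters at all but one index, leaving positions outside \( R_{t} \) untouched. In the present setting the nontrivial \( u \)-letters of \( \zeta \) are already in the right places --- heredity forces \( \zeta(i) = \alpha(i) \) whenever \( \zeta(i) \in \langle u \rangle \setminus \{e\} \) --- so the two-element remainders of Remark~\ref{rem:structure-of-forests-for-hereditary-pairs} need no repair. The defective positions are those with \( \alpha(i) = u^{\pm 1} \) but \( \zeta(i) = e \); since the amalgamation subgroup of \( G * \langle u \rangle \) is trivial, Remark~\ref{rem:elements-from-A-are-terminal-in-max-forests} makes each such \( i \) a singleton leaf of the maximal forest, and no symmetrization can reach it (it lies in its own remainder of size one, and in no other). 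Fixing these positions --- and arranging \( \xi(i+1) \in A \) --- requires a different local modification of \( \zeta \) together with a change in the forest, as in the actual argument of \cite[Lemma~8.15]{1111.1538}; once you follow that argument the max-for-sum substitution goes through, as the paper asserts.
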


Let \( \delta \) be the Graev ultrametric on \( G * \langle u \rangle \) and \( \underline{d} \) be the Graev
ultrametric on \( G* uAu^{-1} \).

\begin{theorem}[cf.~Theorem 8.16 \cite{1111.1538}]
  \label{thm:metrics-agree-iff-diam-le-1}
  Two metrics agree \( \underline{d} = \delta|_{G* u A u^{-1}} \) if and only if \( \diam(A) \le 1 \).
\end{theorem}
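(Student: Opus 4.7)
The plan is to prove equality of $\underline{d}$ and $\delta|_{G*uAu^{-1}}$ under the diameter hypothesis $\diam(A)\le 1$, and to exhibit a separating element otherwise.

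The inequality $\delta|_{G*uAu^{-1}}\le\underline{d}$ holds without any diameter restriction. Given a multipliable $f$-pair $(\beta,\eta)$ in $G*uAu^{-1}$, I unfold each compound letter $uau^{-1}\in uAu^{-1}$ occurring in $\beta$ (and in the corresponding position of $\eta$) into the three-letter block $u\,a\,u^{-1}$; by multipliability both words unfold at the same positions, producing an $f$-pair $(\alpha,\zeta)$ in $G*\langle u\rangle$. Since the metric on $uAu^{-1}$ inherited from $\delta$ satisfies $\delta(uau^{-1},ua'u^{-1})=d(a,a')$ by two-sided invariance, and $\max\{d(u,u),\,d(a,a'),\,d(u^{-1},u^{-1})\}=d(a,a')$, one has $\rho(\alpha,\zeta)=\rho(\beta,\eta)$, and passing to infima gives $\delta\le\underline{d}$.

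For the converse inequality under $\diam(A)\le 1$, fix $f\in\langle G,uAu^{-1}\rangle$ and let $\alpha_{0}$ be a reduced form of $f$ in $G*\langle u\rangle$. Because $f$ lies in $G*uAu^{-1}$, the reduced form $\alpha_{0}$ has the alternating shape $g_{0}\,u\,a_{1}\,u^{-1}\,g_{1}\cdots u\,a_{k}\,u^{-1}\,g_{k}$ with $g_{i}\in G$ and $a_{i}\in A\setminus\{e\}$. Starting from an arbitrary $f$-pair $(\alpha_{0},\zeta)$, apply Lemma~\ref{lem:reduction-hereditary} to reduce to a hereditary companion without increasing $\rho$, then Lemma~\ref{lem:reduction-rigid} to obtain a rigid $f$-pair $(\alpha_{0},\xi)$ with the additional feature $\xi(i+1)\in A$ whenever $\alpha_{0}(i)=u$. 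Rigidity places the $u^{\pm 1}$'s of $\xi$ at the same positions as in $\alpha_{0}$, so the triples $u\,a_{i}\,u^{-1}$ of $\alpha_{0}$ are matched with triples $u\,b_{i}\,u^{-1}$ of $\xi$ for some $b_{i}\in A$. Bundling each such triple into the single letter $ua_{i}u^{-1}$ (respectively $ub_{i}u^{-1}$) of $uAu^{-1}$ produces a multipliable $f$-pair $(\beta,\eta)$ in $G*uAu^{-1}$, and the letter-distance identity from the previous paragraph gives $\rho(\beta,\eta)\le\rho(\alpha_{0},\xi)\le\rho(\alpha_{0},\zeta)$. Taking the infimum delivers $\underline{d}\le\delta$ and hence equality.

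For the reverse implication, assume $\diam(A)>1$ and choose $a\in A$ with $d(a,e)>1$. The strategy is to construct $f\in\langle G,uAu^{-1}\rangle$ whose $\delta$-pairs in $G*\langle u\rangle$ exploit the small scale $d(u,e)=1$ to achieve $\rho$-value below $d(a,e)$, while every $\underline{d}$-pair in $G*uAu^{-1}$ is forced to pay at least $d(a,e)$ at some compound-letter position because no free-product cancellation is available. Following the template of \cite[Theorem~8.16]{1111.1538}, such an $f$ is assembled as a product of compound letters separated by carefully chosen elements of $G$ that block every cancellation in $G*uAu^{-1}$; the existence and verification of such a witness is the crux.

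Two principal obstacles stand out. First, Lemmas~\ref{lem:reduction-hereditary} and~\ref{lem:reduction-rigid} are used as black boxes, but their ultrametric proofs (obtained from the metric versions in \cite{1111.1538} by replacing sums with maxima) are nontrivial and the hypothesis $\diam(A)\le 1$ enters precisely through the letter-redistributions needed to secure the property $\xi(i+1)\in A$ in Lemma~\ref{lem:reduction-rigid}, which must be carried out within the scale $d(u,e)=1$. Second, producing the separating $f$ for the converse direction and ruling out all unanticipated cancellations in $G*uAu^{-1}$ that might match the $\delta$-savings is the most delicate combinatorial verification.
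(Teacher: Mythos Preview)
The paper itself gives no proof of this theorem; it is stated with only a cross-reference to the metric analogue in \cite[Theorem~8.16]{1111.1538}, so there is nothing to compare against directly. Your outline follows exactly the template of that reference and is structurally sound; the unfolding argument for $\delta\le\underline d$ and the bundling argument for $\underline d\le\delta$ via hereditary and rigid pairs are the right moves.

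Two remarks are worth making. First, you are right that the hypothesis $\diam(A)\le 1$ enters through Lemma~\ref{lem:reduction-rigid}: as the paper states that lemma (with no diameter hypothesis), it is actually false. Take $A=G$ and $a\in A$ with $d(a,e)>1$; for $f=(uau^{-1})a^{-1}$ with reduced form $\alpha_0=u\,a\,u^{-1}\,a^{-1}$, the hereditary pair $\zeta=e\,a\,e\,a^{-1}$ has $\rho(\alpha_0,\zeta)=\max\{1,0,1,0\}=1$, but the only rigid companion is $\xi=u\,e\,u^{-1}\,e$ with $\rho(\alpha_0,\xi)=d(a,e)>1$. So the paper's statement of the lemma tacitly assumes $\diam(A)\le 1$, exactly as you suspected.

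Second, the ``only if'' direction is much easier than you suggest. The same element $f=(uau^{-1})a^{-1}$ works as the separating witness. On the one hand, $\delta(f,e)\le 1$ via the pair $(\alpha_0,\zeta)$ above. On the other hand, in $G*uAu^{-1}$ the reduced form of $f$ is the two-letter word $(uau^{-1})\,a^{-1}$, and since the factors intersect trivially the only multipliable trivial companion of this length is $e\,e$, giving $\underline d(f,e)=\max\{d(a,e),d(a,e)\}=d(a,e)>1$. No elaborate blocking configuration is needed.
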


\begin{theorem}[cf.~Theorem 9.1 \cite{1111.1538}]
  \label{thm:graev-metrics-on-hnn-extensions}
  Let \( (G, d\,) \) be an ultrametric group with a two-sided invariant metric \( d \), \( A \) and \( B \) be closed
  subgroups of \( G \) and \( \phi : A \to B \) be a \( d \)-isometric isomorphism.  If \( \diam(A) \le K \), then there
  exists a two-sided invariant ultrametric \( \delta \) on the HNN extension \( H \) of \( (G, \phi) \) which extends \(
  d \) and such that \( \delta(t, e) = K \), where \( t \) is the stable letter of \( H \).
\end{theorem}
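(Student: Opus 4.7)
The plan is to realize the HNN extension $H$ inside an amalgamated free product of two ordinary free products $G \ast \langle u \rangle$ and $G \ast \langle v \rangle$, and then to restrict the Graev ultrametric on the amalgam to $H$. Concretely, I would first equip $\langle u \rangle$ with the discrete two-sided invariant ultrametric $d_u(u^m, u^n) = K$ for $m \ne n$, and similarly $d_v$ on $\langle v \rangle$; both have diameter $K$, matching the target value $\delta(t,e) = K$. Applying Theorem \ref{thm:Graev-ultrametric} twice yields two-sided invariant ultrametrics $\delta_u$ on $G \ast \langle u \rangle$ and $\delta_v$ on $G \ast \langle v \rangle$ extending $d$ on $G$ and the chosen metrics on the cyclic factors.

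Next, I would identify the two copies of the would-be amalgamated subgroup. The subgroup $\langle G, uAu^{-1}\rangle$ of $G \ast \langle u \rangle$ is canonically isomorphic to the free product $G \ast uAu^{-1}$. Apply Theorem \ref{thm:metrics-agree-iff-diam-le-1} (in its obvious $K$-rescaled form, obtained by replacing $d$ by $d/K$ throughout; the diameter hypothesis $\diam(A) \le K$ is what makes this step go) to conclude that $\delta_u$ restricted to $\langle G, uAu^{-1} \rangle$ coincides with the intrinsic Graev ultrametric $\underline{d}_u$ on $G \ast uAu^{-1}$, and analogously on the $v$-side. Because $\phi : A \to B$ is an isometric isomorphism, and because the Graev construction depends only on the pointed ultrametric space of generators, the map $\Phi : G \ast uAu^{-1} \to G \ast vBv^{-1}$ fixing $G$ and sending $uau^{-1} \mapsto v\phi(a)v^{-1}$ is an isometric group isomorphism. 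Now form
\[ \widetilde{H} \;=\; (G \ast \langle u \rangle) \ast_{C} (G \ast \langle v \rangle), \]
where $C$ denotes the common closed subgroup via $\Phi$. Another application of Theorem \ref{thm:Graev-ultrametric} produces a two-sided invariant ultrametric $\tilde{\delta}$ on $\widetilde{H}$ extending both $\delta_u$ and $\delta_v$. Setting $\delta := \tilde{\delta}|_H$, where $H = \langle G, v^{-1}u \rangle \le \widetilde{H}$ is the HNN extension and $t = v^{-1}u$, we obtain a two-sided invariant ultrametric on $H$ extending $d$ on $G$.

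It remains to compute $\delta(t, e)$. The upper bound $\delta(t, e) \le K$ is immediate from Proposition \ref{prop:chracteristic-tsi-inequality-ultrametric}, since $\tilde{\delta}(v^{-1}u, e) \le \max\{\tilde{\delta}(v^{-1}, e), \tilde{\delta}(u, e)\} = K$. For the lower bound, use left invariance to rewrite $\tilde{\delta}(t, e) = \tilde{\delta}(u, v)$, and apply Lemma \ref{lem:norm--reduced-pairs}: the reduced form of $v^{-1}u$ in the free product with amalgamation $\widetilde{H}$ is the length-$2$ alternating word $(v^{-1}, u)$, so the infimum runs over reduced pairs whose trivial companion has the form $\zeta = (x, x^{-1})$ with $x$ multipliable with one of $v^{-1}$, $u$. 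In each case the amalgam-distance formula combined with Theorem \ref{thm:metrics-agree-iff-diam-le-1} (more precisely, its corollary that in $\delta_u$ every element of $C$ is at distance at least $K$ from $u$, and similarly for $v$ in $\delta_v$) forces $\rho(\alpha, \zeta) \ge K$, whence $\delta(t, e) \ge K$.

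The main obstacle, and the reason the argument is not pure bookkeeping, is exactly this last invocation: one needs that the stable letter truly sits at distance $K$ and is not accidentally collapsed by the amalgamation. Following the cited paper, this reduces to showing $\delta_u(u, c) \ge K$ for every $c \in C$, which is essentially the content of Theorem \ref{thm:metrics-agree-iff-diam-le-1} and is the only point where the hypothesis $\diam(A) \le K$ is genuinely used. Everything else is a routine transcription of the metric arguments of \cite{1111.1538} with summation replaced by maximum, exactly as was done for free products in Section \ref{sec:graev-ultr-free}.
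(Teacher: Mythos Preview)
Your approach is exactly the one sketched in the paper (which in turn defers to \cite{1111.1538}): realize the HNN extension inside the amalgam of $G*\langle u\rangle$ and $G*\langle v\rangle$ over $C\cong G*uAu^{-1}\cong G*vBv^{-1}$ and restrict the Graev ultrametric. Two points deserve attention, however.

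First, your choice of the discrete ultrametric of diameter $K$ on $\langle u\rangle$ is not merely convenient but necessary: the paper's $d(u^m,u^n)=|m-n|$ is carried over verbatim from the metric setting and is \emph{not} an ultrametric, so Theorem~\ref{thm:metrics-agree-iff-diam-le-1} as literally stated needs this adjustment before it can be invoked. The proof via Lemmas~\ref{lem:reduction-hereditary} and~\ref{lem:reduction-rigid} goes through for the discrete metric without new ideas, but you should say so rather than appeal to a ``$K$-rescaled form'' of a statement whose hypothesis is not ultrametric to begin with.

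Second, the lower bound $\delta(t,e)\ge K$ does reduce, as you say, to $\delta_u(u,c)\ge K$ for every $c\in C$, but this is not a corollary of Theorem~\ref{thm:metrics-agree-iff-diam-le-1}: that theorem only compares two metrics \emph{on} $C$ and says nothing about the distance from $u$ to $C$. A clean way to get the bound is to use the retraction $\pi:G*\langle u\rangle\to\langle u\rangle$ that kills $G$; one checks directly from the definition of $\delta_u$ (via Proposition~\ref{prop:chracteristic-tsi-inequality-ultrametric}) that $\pi$ is $1$-Lipschitz, and since $\pi(C)=\{e\}$ this gives $\delta_u(u,c)\ge d_{\langle u\rangle}(u,e)=K$. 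With these two clarifications your outline is complete and matches the intended argument.
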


\section{Free products of Polish groups}
\label{sec:free-products-polish}

In this section we introduce and investigate a notion of a free product of Polish groups.  Our construction goes as
follows.  First we define unions of scaled spaces and argue that the union of scaled spaces \( \mathbf{X} \) and
\( \mathbf{Y} \) gives rise to a natural notion of the free product of the free groups \( F(\mathbf{X}) \) and
\( F(\mathbf{Y}) \).  Next using the surjective universality of groups \( \clF(\mathbf{X}) \)
we define free products of Polish groups as factors of \( \clF(\mathbf{X} \cup \mathbf{Y}) \).  Our
construction of a free product of Polish groups \( G \) and \( H \) is not canonical.  It takes for input two scaled
spaces \( \mathbf{X} \), \( \mathbf{Y} \), left invariant metrics \( d_{G} \) and \( d_{H} \) on \( G \) and \( H \)
respectively, and surjective Lipschitz morphisms \( \phi_{G} : X \to G \) and \( \phi_{H} : Y \to H \).  Universal
properties of our construction that are reminiscent of the universal properties for free products of
abstract groups are given in Proposition \ref{thm:properties-of-the-free-product}.

While the results of the previous section are companions of the corresponding earlier result in the metric setting, the
free product construction of this section is new for both metric and ultrametric cases.

\begin{definition}
  \label{def:amalgam-of-scaled-spaces}
  Given two scaled (ultra)metric spaces \( \mathbf{X} = \bigl( \invx, d_{X}, e, \Gamma_{X} \bigr) \) and
  \( \mathbf{Y} = \bigl( \invy, d_{Y}, e, \Gamma_{Y} \bigr) \) we define their union
  \( \mathbf{X} \cup \mathbf{Y} = (\invz, d, e, \Gamma) \) to be the (ultra)metric amalgam of \( \invx \) and
  \( \invy \) over \( \{e\} \) (see Figure \ref{fig:union-scaled}).  More precisely, if \( Z \) is the (ultra)metric
  amalgam of the (ultra)metric spaces \( X \) and \( Y \) over the subspace \( \{e\} \), then as an (ultra)metric space
  \( \mathbf{X} \cup \mathbf{Y} \) is obtained from \( Z \) by adding formal inverses.  Note that \( \invz \) is also
  the amalgam of \( \invx \) and \( \invy \) over \( \{e\} \); in other words \( \invxcupy = \invx \cup \invy \).  The
  scale \( \Gamma \) on \( \invz \) is the union of scales \( \Gamma_{X} \) and \( \Gamma_{Y} \):
  \begin{displaymath}
    \Gamma(z,r) = 
    \begin{cases}
      \Gamma_{X}(z,r) & \textrm{if \( z \in \invx \)},\\
      \Gamma_{Y}(z,r) & \textrm{if \( z \in \invy \)}.\\
    \end{cases}
  \end{displaymath}
  \begin{figure}[htb]
    \centering
    \includegraphics{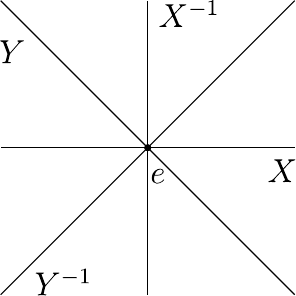}
    \caption{Union of scaled spaces.}
    \label{fig:union-scaled}
  \end{figure}
\end{definition}

Let \( \mathbf{X} \) and \( \mathbf{Y} \) be scaled (ultra)metric spaces and let
\( \pi_{X} : \invxcupy \to \invx \) be the retract map:
\begin{displaymath}
  \pi_{X}(z) = 
  \begin{cases}
    z & \textrm{if \( z \in \invx \)},\\
    e & \textrm{if \( z \in \invy \)}.
  \end{cases}
\end{displaymath}
This map is Lipschitz and it extends to a surjective group homomorphism
\( \pi_{X} : F(\mathbf{X} \cup \mathbf{Y}) \to F(\mathbf{X}) \).

\begin{proposition}
  \label{prop:existence-of-retraction}
  The homomorphism  \( \pi_{X} : F(\mathbf{X} \cup \mathbf{Y}) \to F(\mathbf{X}) \) is Lipschitz.
\end{proposition}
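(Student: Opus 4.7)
The plan is to prove $\norm{\pi_X(f)} \le \norm{f}$ for every $f \in F(\mathbf{X} \cup \mathbf{Y})$, which is equivalent to $\pi_X$ being $1$-Lipschitz with respect to the left invariant metrics associated with the Graev norms. By the definition of the Graev norm of a scale, $\norm{f}$ is the infimum of $\norm[\theta]{w}$ over all pairs $(w, \theta)$ with $w \in \word{\invxcupy}$, $\hat{w} = f$, and $\theta$ a match on $w$. Given such a pair, I produce the word $\pi_X(w) \in \word{\invx}$ by applying $\pi_X$ letter-by-letter; the position-based match $\theta$ remains a valid match on $\pi_X(w)$ since $\pi_X$ preserves formal inverses, and $\widehat{\pi_X(w)} = \pi_X(f)$ because $\pi_X$ is a group homomorphism. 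Taking the infimum, it then suffices to show $\norm[\theta]{\pi_X(w)} \le \norm[\theta]{w}$ for every matched word $(w,\theta)$.

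First I would record two auxiliary facts about the retract $\pi_X$. The map is $1$-Lipschitz as a map $(\invxcupy, d) \to (\invx, d_X)$: the only nontrivial case is $z_1 \in \invx$ and $z_2 \in \invy$, where the (ultra)metric amalgam gives $d(z_1, z_2) = d(z_1, e) + d(e, z_2)$ (or its ultrametric analogue), each summand of which dominates $d_X(\pi_X(z_1), e) = d_X(\pi_X(z_1), \pi_X(z_2))$. The second fact is the scale inequality $\Gamma_X(\pi_X(z), r) \le \Gamma(z, r)$: when $z \in \invx$ this is an equality, and when $z \in \invy$ it reduces to $r \le \Gamma(z, r)$, which is part of the definition of a scale.

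The main step is an induction on $|w|$ showing $\norm[\theta]{\pi_X(w)} \le \norm[\theta]{w}$. The single-letter base cases of item (i) follow directly from Lipschitz-ness of $\pi_X$. For the inner match case $w = u_1 u_2$ of item (ii), the inductive hypothesis applied to $u_1$ and $u_2$ together with monotonicity of $\max$ (or of $+$ in the metric case) handles the bound. The interesting case is the outer match of item (iii), where $w = x_1 u x_n$ with $\theta(x_1) = x_n$: the leading distance term $d_X(\pi_X(x_1), \pi_X(x_n)^{-1}) \le d(x_1, x_n^{-1})$ is again Lipschitz-ness, while the scale term is controlled by combining the scale inequality $\Gamma_X(\pi_X(\cdot), \cdot) \le \Gamma(\cdot, \cdot)$, monotonicity of $\Gamma_X$ in its second argument, and the inductive bound $\norm[\theta]{\pi_X(u)} \le \norm[\theta]{u}$.

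The only real subtlety lies in the outer match case when one of $x_1, x_n$ belongs to $\invy$: then the corresponding scale term on the $\pi_X(w)$ side degenerates via $\Gamma_X(e, r) = r$ to $\norm[\theta]{\pi_X(u)}$, and we bound this by $\norm[\theta]{u} \le \Gamma(x_1^{-1}, \norm[\theta]{u})$ (or analogously for $x_n$) using the scale axiom $\Gamma(z, r) \ge r$. This is the only case that is not a direct propagation of the $1$-Lipschitz bounds through the recursive definition, and it is resolved cleanly by the scale axioms. The argument is uniform in the metric and ultrametric settings since both $+$ and $\max$ are monotone in each argument.
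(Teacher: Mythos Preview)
Your proof is correct and follows essentially the same approach as the paper: apply $\pi_X$ letterwise to a word $w$ representing $f$, keep the same match $\theta$, and use that $\pi_X$ is Lipschitz on $\invxcupy$ together with the scale inequality $\Gamma_X(\pi_X(z),r) \le \Gamma(z,r)$ and monotonicity of the scale to conclude $\norm[\theta]{\pi_X(w)} \le \norm[\theta]{w}$. The paper compresses the inductive verification into a single sentence, whereas you spell it out; your ``subtlety'' paragraph is in fact already subsumed by your general scale bound in the preceding paragraph, so it is correct but redundant rather than a genuinely separate case.
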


\begin{proof}
  Let \( f \in F(\mathbf{X} \cup \mathbf{Y}) \), \( \norm[][X]{} \) be the Graev (ultra)norm on
  \( F(\mathbf{X}) \), and \( \norm{} \) be the Graev (ultra)norm on \( F(\mathbf{X} \cup \mathbf{Y}) \).  Pick
  \( w \in \word{\invxcupy} \), \( \hat{w} = f \), and a match \( \theta \) on \( w \).  We need to show that
  \( \norm[][X]{} \bigl( \pi_{X}(f) \bigr) \le \norm{f} \), and for this it is enough to find a word
  \( u \in \word{\invx} \) and a match \( \mu \) on \( u \) such that \( \hat{u} = \pi_{X}(f) \) and
  \( \norm[\mu]{u} \le \norm[\theta]{w} \) (note that \( \norm[\mu][X]{u} = \norm[\mu]{u} \) because \( \mathbf{X} \) is
  a subspace of \( \mathbf{X} \cup \mathbf{Y} \), and therefore we omit the subscript).  If
  \( w = z_{1} \cdots z_{n} \) with \( z_{i} \in \invxcupy \), set \( u = \tilde{z}_{1} \cdots \tilde{z}_{n} \)
  with \( \tilde{z}_{i} = \pi_{X}(z_{i}) \).  We can view \( \theta \) as being also a match on \( u \).  Since
  \( \pi_{X} : \invxcupy \to \invx \) is Lipschitz, we have
  \( d\big( z_{i},z_{j}^{-1} \big) \ge d_{X} \big( \tilde{z}_{i}, \tilde{z}_{j}^{-1} \big) \) for all \( i,j \).  By
  item \eqref{item:smallest-at-identity} of the definition of the scale,
  \( \Gamma(z,r) \ge \Gamma\bigl( \pi_{X}(z),r \bigr) \) for all \( r \in \mathbb{R}^{+} \) and all
  \( z \in \invxcupy \).  It now follows from item \eqref{item:monotonicity} of the scale and from the definition
  of the norm that \( \norm[\theta]{u} \le \norm[\theta]{w} \).
\end{proof}

The homomorphism \( \pi_{X} \) is therefore continuous and extends to a continuous homomorphism
\[ \pi_{X} : \clF(\mathbf{X} \cup \mathbf{Y}) \to \clF(\mathbf{X}). \]
Note that \( \pi_{X}(f) = f \) for any \( f \in \clF(\mathbf{X}) \) and that \( \pi_{X}(f) = e \) holds true for
all \( f \in \clF(\mathbf{Y}) \).

\begin{corollary}
  \label{thm:embedding-is-isometric}
  Let \( \mathbf{X} \) and \( \mathbf{Y} \) be scaled (ultra)metric spaces.  The inclusion
  \( F(\mathbf{X}) \hookrightarrow F(\mathbf{X} \cup \mathbf{Y}) \) is isometric.
\end{corollary}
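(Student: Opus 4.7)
The plan is to show that the inclusion $F(\mathbf{X}) \hookrightarrow F(\mathbf{X} \cup \mathbf{Y})$ preserves the Graev (ultra)norm, which, together with the left invariance of both Graev metrics (Proposition \ref{thm:generalized-graev-metric}), yields the desired isometry. Write $\norm[][X]{\cdot}$ for the Graev norm on $F(\mathbf{X})$ and $\norm{\cdot}$ for the Graev norm on $F(\mathbf{X} \cup \mathbf{Y})$; the task reduces to proving $\norm[][X]{f} = \norm{f}$ for every $f \in F(\mathbf{X})$.

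One direction, $\norm{f} \le \norm[][X]{f}$, is essentially free from the construction of the union of scaled spaces. Any word $w \in \word{\invx}$ with $\hat{w} = f$ and any match $\theta$ on $w$ is automatically a word in $\word{\invxcupy}$ with a match, since $\invx \subseteq \invxcupy$ isometrically and the scale $\Gamma$ on $\invxcupy$ restricts to $\Gamma_{X}$ on $\invx$ by Definition \ref{def:amalgam-of-scaled-spaces}. An inductive inspection of the recursive formula for $\norm[\theta]{w}$ then shows that its value computed using the ambient scale $\Gamma$ coincides with the value computed using $\Gamma_{X}$, so taking the infimum of $\norm[\theta]{w}$ over the (smaller) family of representations $w \in \word{\invx}$ bounds $\norm{f}$ from above.

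The opposite inequality $\norm[][X]{f} \le \norm{f}$ is where Proposition \ref{prop:existence-of-retraction} does the real work. The retraction $\pi_{X} : F(\mathbf{X} \cup \mathbf{Y}) \to F(\mathbf{X})$ is Lipschitz and acts as the identity on $F(\mathbf{X})$; hence for $f \in F(\mathbf{X})$,
\[ \norm[][X]{f} = \norm[][X]{\pi_{X}(f)} \le \norm{f}. \]
Combining the two inequalities yields $\norm[][X]{f} = \norm{f}$, and then for any $f_{1}, f_{2} \in F(\mathbf{X})$ the left invariance of both metrics gives $\delta_{X}(f_{1},f_{2}) = \norm[][X]{f_{1}^{-1}f_{2}} = \norm{f_{1}^{-1}f_{2}} = \delta(f_{1},f_{2})$.

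There is no real obstacle here; the only mild subtlety is checking that the quantity $\norm[\theta][\Gamma]{w}$ for $w \in \word{\invx}$ is genuinely the same whether one uses $\Gamma$ or $\Gamma_{X}$, and this is immediate from the fact that $\Gamma$ restricts to $\Gamma_{X}$ on $\invx$. All the heavy lifting is contained in the preceding Proposition, which already produced the Lipschitz retraction needed to bound $\norm[][X]{f}$ by $\norm{f}$.
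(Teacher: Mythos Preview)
Your proof is correct and follows essentially the same approach as the paper: both argue that $\norm{f} \le \norm[][X]{f}$ because the infimum defining $\norm{f}$ is taken over a larger set (with the scale agreeing on $\invx$), and that $\norm[][X]{f} \le \norm{f}$ follows from the Lipschitz retraction of Proposition~\ref{prop:existence-of-retraction} together with $\pi_{X}(f)=f$ for $f\in F(\mathbf{X})$. Your version merely spells out a few details the paper leaves implicit.
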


\begin{proof}
  Let \( \norm{} \) denote the Graev (ultra)norm on \( F(\mathbf{X} \cup \mathbf{Y}) \), and \( \norm[][X]{} \)
  be the Graev (ultra)norm on \( F(\mathbf{X}) \).  We need to show that for all \( f \in F(\mathbf{X}) \) one has
  \( \norm{f} = \norm[][X]{f} \).  By definition
  \begin{displaymath}
    \begin{aligned}
      \norm[][X]{f} &= \inf \Bigl\{\norm[\theta]{w} : w \in \word{\invx},\ \hat{w} = f\ \textrm{and \( \theta \)
        is
        a match on \( w \)}\Bigr\},\\
      \norm{f} &= \inf \Bigl\{\norm[\theta]{w} : w \in \word{\invxcupy},\ \hat{w} = f\ \textrm{and
        \( \theta \) is
        a match on \( w \)}\Bigr\},\\
    \end{aligned}
  \end{displaymath}
  and therefore \( \norm{f} \le \norm[][X]{f} \).  The reverse inequality follows immediately from \( \pi_{X}(f)
  = f \) for \( f \in F(\mathbf{X}) \) and Proposition \ref{prop:existence-of-retraction}.
\end{proof}

\begin{corollary}
  \label{cor:embedding-of-completions}
  Inclusion \( F(\mathbf{X}) \hookrightarrow F(\mathbf{X}\cup\mathbf{Y}) \) extends to
  \( \clF(\mathbf{X}) \hookrightarrow \clF(\mathbf{X} \cup \mathbf{Y}) \).
\end{corollary}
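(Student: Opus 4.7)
The plan is to promote the isometry of Corollary \ref{thm:embedding-is-isometric} from the left-invariant Graev (ultra)norm to the two-sided metric $D$ used to form the group completion, and then invoke the universal property of the Hausdorff completion.

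First I would recall that, by construction, the completion $\clF(\mathbf{X})$ is the Hausdorff completion of $F(\mathbf{X})$ with respect to
\[ D_{X}(f_{1}, f_{2}) = d_{X}(f_{1}, f_{2}) + d_{X}\bigl(f_{1}^{-1}, f_{2}^{-1}\bigr), \]
where $d_{X}$ is the left-invariant (ultra)metric coming from the Graev (ultra)norm $\norm[][X]{}$ on $F(\mathbf{X})$, and similarly $\clF(\mathbf{X} \cup \mathbf{Y})$ is the completion of $F(\mathbf{X} \cup \mathbf{Y})$ with respect to the analogous $D$ built from $\norm{}$. (In the ultrametric setting the same argument works with the max formulation of $D$.)

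Next I would show that the canonical inclusion $F(\mathbf{X}) \hookrightarrow F(\mathbf{X} \cup \mathbf{Y})$ is isometric for $D$. Since $F(\mathbf{X})$ is a subgroup of $F(\mathbf{X}\cup\mathbf{Y})$, for any $f_{1}, f_{2} \in F(\mathbf{X})$ both $f_{1}^{-1} f_{2}$ and $f_{1} f_{2}^{-1}$ lie in $F(\mathbf{X})$, and by Corollary \ref{thm:embedding-is-isometric} the norms agree: $\norm{f_{1}^{-1} f_{2}} = \norm[][X]{f_{1}^{-1}f_{2}}$ and $\norm{f_{1} f_{2}^{-1}} = \norm[][X]{f_{1} f_{2}^{-1}}$. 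Using $d(g,h) = \norm{g^{-1}h}$ on both sides, this gives $d(f_{1}, f_{2}) = d_{X}(f_{1}, f_{2})$ and $d(f_{1}^{-1}, f_{2}^{-1}) = d_{X}(f_{1}^{-1}, f_{2}^{-1})$, hence $D(f_{1}, f_{2}) = D_{X}(f_{1}, f_{2})$.

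Finally, an isometric embedding between two metric spaces extends uniquely to an isometric embedding between their Hausdorff completions: every $D_{X}$-Cauchy sequence in $F(\mathbf{X})$ is automatically $D$-Cauchy in $F(\mathbf{X}\cup\mathbf{Y})$, the induced map on equivalence classes is well-defined because $D$ and $D_{X}$ agree on $F(\mathbf{X})$, and it is injective for the same reason. The resulting map $\clF(\mathbf{X}) \to \clF(\mathbf{X} \cup \mathbf{Y})$ is continuous and coincides with the group inclusion on the dense subgroup $F(\mathbf{X})$; since the group operations extend continuously to the completions, it is a group homomorphism, and hence the desired isometric embedding. There is no real obstacle here; the only point that requires attention is to check the isometry for $D$ rather than just for $d$, which is why one needs to apply Corollary \ref{thm:embedding-is-isometric} to both $f_{1}^{-1}f_{2}$ and $f_{1}f_{2}^{-1}$.
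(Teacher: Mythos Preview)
Your argument is correct and is exactly the routine completion argument the paper has in mind; the paper states this corollary without proof, treating it as immediate from Corollary~\ref{thm:embedding-is-isometric}. Your only addition is to spell out that the isometry for the left-invariant metric upgrades to one for the two-sided metric $D$, which is precisely the detail needed to pass to the group completion.
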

It is natural to regard \( \clF(\mathbf{X} \cup \mathbf{Y}) \) as being the free product of groups
\( \clF(\mathbf{X}) \) and \( \clF(\mathbf{Y}) \).

\bigskip

Let \( (G, d_{G}) \) and \( (H, d_{H}) \) be Polish groups with compatible left invariant (ultra)metrics, let
\( \mathbf{X} \) and \( \mathbf{Y} \) be separable scaled (ultra)metric spaces, and let
\( \phi_{G} : \mathbf{X} \to G \) and \( \phi_{H} : \mathbf{Y} \to H \) be \emph{surjective\/} Lipschitz morphisms.  By
Proposition \ref{thm:exntesion-of-lipschitz-morphisms} they extend to surjective homomorphisms
\[ \phi_{G} : \clF(\mathbf{X}) \to G, \quad \phi_{H} : \clF(\mathbf{Y}) \to H\]
with kernels \( \hker{G} \) and \( \hker{H} \) respectively.  We note that as proved in \cite[Theorem 3.10]{MR2278689},
for any Polish group \( G \) there are plenty of surjective Lipschitz morphisms \( \phi : \invx \to G \), and moreover,
one may always take \( X = \mathbb{N}^{\mathbb{N}} \).  Note also that \( G \) is isomorphic to
\( \clF(\mathbf{X})/\hker{G} \) with the quotient topology (see, for instance, \cite[Theorem 1.2.6]{MR1425877}).  We
shall identify \( G \) with \( \clF(\mathbf{X})/\hker{G} \) and \( H \) with \( \clF(\mathbf{Y})/\hker{H} \).  By
Corollary \ref{cor:embedding-of-completions} we may view \( \hker{G} \) and \( \hker{H} \) as subgroups of
\( \clF(\mathbf{X} \cup \mathbf{Y}) \).  Let \( \hker{G*H}^{\circ} \) be the normal subgroup of
\( \clF(\mathbf{X} \cup \mathbf{Y}) \) generated by \( \hker{G} \) and \( \hker{H} \):
\[ \hker{G*H}^{\circ} = \Big\{ f_{1}h_{1}f_{1}^{-1} \cdots f_{n}h_{n}f_{n}^{-1} \Bigm | n \in \mathbb{N},\ f_{i} \in
\clF(\mathbf{X} \cup \mathbf{Y}),\ h_{i} \in \big\langle \hker{G}, \hker{H} \big\rangle \Big\}, \]
and let \( \hker{G*H} \) be the closure of \( \hker{G*H}^{\circ} \), i.e., \( \hker{G*H} \) is the \emph{closed\/}
normal subgroup in \( \clF(\mathbf{X} \cup \mathbf{Y}) \) generated by \( \hker{G} \) and \( \hker{H} \).

\begin{lemma}
  \label{lem:pi-of-freepr-kernel-is-phi-kernel}
  In the setting above \( \pi_{X}(\hker{G*H}) = \hker{G} \).
\end{lemma}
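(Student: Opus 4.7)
The plan is to establish the two inclusions $\hker{G} \subseteq \pi_X(\hker{G*H})$ and $\pi_X(\hker{G*H}) \subseteq \hker{G}$ separately, using that $\pi_X$ restricts to the identity on $\clF(\mathbf{X})$ and annihilates $\clF(\mathbf{Y})$.

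For the forward inclusion $\hker{G} \subseteq \pi_X(\hker{G*H})$, observe that $\hker{G}$ is by definition a subset of $\clF(\mathbf{X})$, and the inclusion $\clF(\mathbf{X}) \hookrightarrow \clF(\mathbf{X} \cup \mathbf{Y})$ of Corollary \ref{cor:embedding-of-completions} makes $\hker{G} \subseteq \hker{G*H}^{\circ} \subseteq \hker{G*H}$. Since $\pi_X$ fixes every element of $\clF(\mathbf{X})$, we get $\hker{G} = \pi_X(\hker{G}) \subseteq \pi_X(\hker{G*H})$.

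For the reverse inclusion, I would first show $\pi_X(\hker{G*H}^{\circ}) \subseteq \hker{G}$ by a direct computation on the generators. A typical element of $\hker{G*H}^{\circ}$ has the form $f_1 h_1 f_1^{-1} \cdots f_n h_n f_n^{-1}$ with $f_i \in \clF(\mathbf{X} \cup \mathbf{Y})$ and $h_i \in \langle \hker{G}, \hker{H} \rangle$. Applying the homomorphism $\pi_X$, each $\pi_X(h_i)$ is a product of factors $\pi_X(k)$ with $k \in \hker{G} \cup \hker{H}$; such factors lie in $\hker{G}$ when $k \in \hker{G}$ and equal $e$ when $k \in \hker{H}$, so $\pi_X(h_i) \in \hker{G}$. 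Since $\pi_X(f_i) \in \clF(\mathbf{X})$ and $\hker{G}$ is normal in $\clF(\mathbf{X})$ (being the kernel of $\phi_G$), the full product lies in $\hker{G}$.

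To pass from $\hker{G*H}^{\circ}$ to its closure $\hker{G*H}$, I would use continuity of $\pi_X$ together with the fact that $\hker{G}$ is closed in $\clF(\mathbf{X} \cup \mathbf{Y})$. Continuity of $\pi_X$ gives $\pi_X(\hker{G*H}) = \pi_X(\overline{\hker{G*H}^{\circ}}) \subseteq \overline{\pi_X(\hker{G*H}^{\circ})} \subseteq \overline{\hker{G}}$, so it remains to verify $\overline{\hker{G}} = \hker{G}$ in $\clF(\mathbf{X} \cup \mathbf{Y})$. The main point, and what I expect to be the slightly subtle step, is that $\clF(\mathbf{X})$ itself is closed in $\clF(\mathbf{X} \cup \mathbf{Y})$: it is precisely the equalizer of the identity and the continuous map $\iota \circ \pi_X$ (where $\iota$ denotes the inclusion $\clF(\mathbf{X}) \hookrightarrow \clF(\mathbf{X} \cup \mathbf{Y})$), and such an equalizer is closed in a Hausdorff space. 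Since $\hker{G}$ is closed in $\clF(\mathbf{X})$ as the kernel of the continuous homomorphism $\phi_G$, and $\clF(\mathbf{X})$ is closed in the ambient group, $\hker{G}$ is closed in $\clF(\mathbf{X} \cup \mathbf{Y})$, completing the proof.
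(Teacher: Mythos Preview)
Your proof is correct and follows essentially the same approach as the paper: both compute $\pi_X$ on the generators of $\hker{G*H}^{\circ}$ using normality of $\hker{G}$ in $\clF(\mathbf{X})$, then pass to the closure using that $\hker{G}$ is closed in $\clF(\mathbf{X}\cup\mathbf{Y})$. You are simply more explicit than the paper about the forward inclusion and about why $\clF(\mathbf{X})$ is closed in the ambient group (your equalizer argument spells out what the paper takes for granted).
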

\begin{proof}
  If \( g \in \hker{G*H}^{\circ} \) is of the form
  \[ g = f_{1}h_{1}f_{1}^{-1} \cdots f_{n}h_{n}f_{n}^{-1},\quad f_{i} \in \clF(\mathbf{X} \cup
  \mathbf{Y}),\ h_{i} \in \big\langle \hker{G}, \hker{H} \big\rangle, \] then
  \[ \pi_{X}(g) = \pi_{X}(f_{1})\pi_{X}(h_{1})\pi_{X}(f_{1})^{-1} \cdots \pi_{X}(f_{n})\pi_{X}(h_{n})\pi_{X}(f_{n})^{-1}
  \in \hker{G}, \]
  because \( \pi_{X}(h_{i}) \in \hker{G} \), \( \pi_{X}(f_{i}) \in \clF(\mathbf{X}) \) and \( \hker{G} \) is a
  normal subgroup in \( \clF(\mathbf{X}) \).  Thus \( \pi_{X}(\hker{G*H}^{\circ}) = \hker{G} \) and therefore
  also \( \pi_{X}(\hker{G*H}) = \hker{G} \), since \( \hker{G} \) is closed in \( \clF(\mathbf{X}) \), and hence
  also in \( \clF(\mathbf{X} \cup \mathbf{Y}) \).
\end{proof}

A \emph{Polish free product of \( G \) and \( H \) over \( \phi_{G} \) and \( \phi_{H} \)} is the group
\( \clF(\mathbf{X} \cup \mathbf{Y})/\hker{G*H} \); we denote it by \( \frprod{G}{H} \).  The free product
comes with homomorphisms \( \iota_{G} : G \to \frprod{G}{H} \), \( \iota_{H} : H \to \frprod{G}{H} \) and
\( \pi_{G} : \frprod{G}{H} \to G \), \( \pi_{H} : \frprod{G}{H} \to H \) given by
\begin{displaymath}
  \begin{aligned}
    \iota_{G}(f\hker{G}) = f\hker{G*H},&\quad \iota_{H}(f\hker{H}) = f\hker{G*H}, \\
    \pi_{G}(f\hker{G*H}) = \pi_{X}(f)\hker{G},&\quad \pi_{H}(f\hker{G*H}) = \pi_{Y}(f)\hker{H}.
  \end{aligned}
\end{displaymath}
Note that \( \pi_{G} \) and \( \pi_{H} \) are well-defined by Lemma \ref{lem:pi-of-freepr-kernel-is-phi-kernel}.  Note
also that \( \pi_{G}\big(\iota_{G}(g)\big) = g \) and \( \pi_{H}\big(\iota_{H}(h)\big) = h \) for all \( g \in G \) and
\( h \in H \).

\begin{proposition}
  \label{thm:properties-of-the-free-product}
  Let \( \mathbf{X} \), \( \mathbf{Y} \), \( \phi_{G} \), \( \phi_{H} \) and \( \frprod{G}{H} \) be as above.  Let
  \( d_{G} \) and \( d_{H} \) be compatible left invariant (ultra)metrics on \( G \) and \( H \) with respect to which
  \( \phi_{G} \) and \( \phi_{H} \) are Lipschitz morphisms.
  \begin{enumerate}[(i)]
  \item\label{item:injectivity} \( \iota_{G} \) and \( \iota_{H} \) are injective;
  \item\label{item:iota-continuous}  \( \iota_{G} \) and \( \iota_{H} \) are continuous;
  \item\label{item:pi-continuous} \( \pi_{G} \) and \( \pi_{H} \) are continuous;
  \item\label{item:trivial-intersection} \( \iota_{G}(G) \cap \iota_{H}(H) = \{e\} \);
  \item\label{item:density-of-the-image}  \( \langle \iota_{G}(G), \iota_{H}(H) \rangle \) is a dense
    subgroup of \( \frprod{G}{H} \).
  \end{enumerate}
  Recall that \( \cansc{G} \) denotes the canonical scale on \( G \).
  \begin{enumerate}[(i)]
    \setcounter{enumi}{5}
  \item\label{item:universality-property} If \( (T,d_{T}) \) is a Polish (ultra)metric group, \( \psi_{G} : G \to T \),
    \( \psi_{H} : H \to T \) are Lipschitz homomorphisms and
    \( \cansc{T}(\psi_{G}(g),r) \le \cansc{G}(g,r) \), \( \cansc{T}(\psi_{H}(h),r) \le \cansc{H}(h,r) \) for all
    \( g \in G \), \( h \in H \) and \( r \in \mathbb{R}^{+} \), then there exists a unique continuous homomorphism
    \( \psi : \frprod{G}{H} \to T \) such that \( \psi \circ \iota_{G} = \psi_{G} \) and
    \( \psi \circ \iota_{H} = \psi_{H} \).
  \end{enumerate}
\end{proposition}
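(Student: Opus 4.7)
The strategy is to leverage Lemma~\ref{lem:pi-of-freepr-kernel-is-phi-kernel} heavily for parts (i)--(iv), a density argument lifted from \( F(\mathbf{X}\cup\mathbf{Y}) \) for (v), and Proposition~\ref{thm:exntesion-of-lipschitz-morphisms} together with the surjectivity of \( \phi_G \) and \( \phi_H \) for the universal property (vi). For (i), if \( \iota_G(f_1\hker{G}) = \iota_G(f_2\hker{G}) \) with \( f_i \in \clF(\mathbf{X}) \), then \( f_1 f_2^{-1} \in \hker{G*H} \), so applying \( \pi_X \) and using \( \pi_X(f) = f \) for \( f \in \clF(\mathbf{X}) \) gives \( f_1 f_2^{-1} \in \pi_X(\hker{G*H}) = \hker{G} \) by Lemma~\ref{lem:pi-of-freepr-kernel-is-phi-kernel}; injectivity of \( \iota_H \) is symmetric. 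For (ii) and (iii), \( \iota_G \) is the map induced on \( \clF(\mathbf{X})/\hker{G} = G \) by the composition of the isometric inclusion \( \clF(\mathbf{X}) \hookrightarrow \clF(\mathbf{X}\cup\mathbf{Y}) \) of Corollary~\ref{cor:embedding-of-completions} with the quotient onto \( \frprod{G}{H} \), while \( \pi_G \) is well-defined by Lemma~\ref{lem:pi-of-freepr-kernel-is-phi-kernel} and is induced by the continuous homomorphism \( \pi_X : \clF(\mathbf{X}\cup\mathbf{Y}) \to \clF(\mathbf{X}) \to G \); both are therefore continuous.

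For (iv), given \( \iota_G(g) = \iota_H(h) \) I apply \( \pi_G \) to both sides: \( \pi_G \circ \iota_G = \mathrm{id}_G \) by construction, whereas \( \pi_G \circ \iota_H \) is trivial because if \( h = \phi_H(f) \) for \( f \in \clF(\mathbf{Y}) \), then \( \pi_X(f) = e \); thus \( g = e \), and symmetrically \( h = e \). For (v), observe that every element of \( F(\mathbf{X}\cup\mathbf{Y}) \) is a product of letters from \( \invx \) and \( \invy \), each of which lies in \( \iota_G(G) \) or \( \iota_H(H) \) modulo \( \hker{G*H} \); since \( F(\mathbf{X}\cup\mathbf{Y}) \) is dense in \( \clF(\mathbf{X}\cup\mathbf{Y}) \), its image is dense in the quotient, so \( \langle \iota_G(G), \iota_H(H)\rangle \) is dense in \( \frprod{G}{H} \).

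The substantive step is (vi). I will build the desired \( \psi \) by first constructing a Lipschitz morphism on the ``generator level''. Define \( \sigma : \invxcupy \to T \) by \( \sigma(z) = \psi_G(\phi_G(z)) \) for \( z \in \invx \) and \( \sigma(z) = \psi_H(\phi_H(z)) \) for \( z \in \invy \); this is well-defined on the common point \( e \). I would verify that \( \sigma \) is a Lipschitz morphism with respect to the canonical scale \( \cansc{T} \): the identity- and inverse-preserving conditions are immediate, the metric condition follows on each half from \( \phi_G \), \( \phi_H \) being Lipschitz and \( \psi_G \), \( \psi_H \) being Lipschitz homomorphisms (using that the amalgam metric on \( \invxcupy \) goes through \( e \)), and the scale inequality chains as
\[
\cansc{T}(\sigma(z), r) \le \cansc{G}(\phi_G(z), r) \le \Gamma_X(z, r)
\]
for \( z \in \invx \) by the hypothesis on \( \psi_G \) and the fact that \( \phi_G \) is a Lipschitz morphism in the sense of the canonical scale, and symmetrically on \( \invy \). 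Proposition~\ref{thm:exntesion-of-lipschitz-morphisms} then extends \( \sigma \) to a continuous homomorphism \( \Psi : \clF(\mathbf{X}\cup\mathbf{Y}) \to T \). By construction \( \Psi \) restricts on \( \clF(\mathbf{X}) \) to the extension of \( \psi_G \circ \phi_G \), which equals \( \psi_G \circ \phi_G \) and hence vanishes on \( \hker{G} \); similarly \( \Psi \) vanishes on \( \hker{H} \). Since \( \Psi \) is a continuous homomorphism into the Hausdorff group \( T \), its kernel is a closed normal subgroup of \( \clF(\mathbf{X}\cup\mathbf{Y}) \) containing \( \hker{G} \cup \hker{H} \), hence containing \( \hker{G*H} \). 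Therefore \( \Psi \) factors through the quotient, yielding a continuous homomorphism \( \psi : \frprod{G}{H} \to T \).

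The main obstacle I anticipate is verifying the intertwining relations \( \psi \circ \iota_G = \psi_G \) and \( \psi \circ \iota_H = \psi_H \), which requires tracing the definitions through the surjectivity of \( \phi_G \): for \( g \in G \), pick \( f \in \clF(\mathbf{X}) \) with \( \phi_G(f) = g \), so that \( \iota_G(g) = f\hker{G*H} \) and \( \psi(\iota_G(g)) = \Psi(f) = \psi_G(\phi_G(f)) = \psi_G(g) \); symmetrically for \( \iota_H \). Finally, uniqueness of \( \psi \) follows from (v): any continuous homomorphism \( \psi' \) satisfying the same identities agrees with \( \psi \) on the dense subgroup \( \langle \iota_G(G), \iota_H(H)\rangle \), hence everywhere.
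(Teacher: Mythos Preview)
Your proof is correct and follows essentially the same approach as the paper: Lemma~\ref{lem:pi-of-freepr-kernel-is-phi-kernel} drives (i)--(iv), density of \( F(\mathbf{X}\cup\mathbf{Y}) \) (equivalently, of \( \langle \clF(\mathbf{X}),\clF(\mathbf{Y})\rangle\hker{G*H} \)) gives (v), and for (vi) both you and the paper build the Lipschitz morphism \( \sigma=\zeta \) on \( \invxcupy \), extend via Proposition~\ref{thm:exntesion-of-lipschitz-morphisms}, and factor through \( \hker{G*H} \). Your treatment of (ii)--(iii) invokes the general fact that a continuous homomorphism descending to a quotient by a closed normal subgroup is continuous, whereas the paper verifies this explicitly via an induced right-invariant metric and a sequential argument; and in (vi) you spell out the scale-chain inequality and the intertwining relations that the paper leaves implicit---but these are cosmetic differences, not substantive ones.
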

\begin{proof}
  \eqref{item:injectivity} To show that \( \iota_{G} \) is injective it is enough to check that
  \( \hker{G*H} \cap \clF(\mathbf{X}) = \hker{G} \).  If \( f \in \hker{G*H} \cap \clF(\mathbf{X}) \),
  then \( f = \pi_{X}(f) \in \pi_{X}(\hker{G*H}) = \hker{G} \) by Lemma \ref{lem:pi-of-freepr-kernel-is-phi-kernel};
  hence \( f \in \hker{G} \).

  \eqref{item:iota-continuous} Let \( d \) be a compatible right-invariant (ultra)metric on
  \( \clF(\mathbf{X} \cup \mathbf{Y}) \).  It induces compatible right-invariant (ultra)metrics on the
  factor groups \(  \clF(\mathbf{X} \cup \mathbf{Y}) / \hker{G*H} \) and
  \( \clF(\mathbf{X}) / \hker{G} \) (see \cite[Lemma 2.2.8]{MR2455198})
  \begin{displaymath}
    \begin{aligned}
      d_{1}(f_{1} \hker{G*H}, f_{2} \hker{G*H})  &= \inf\big\{ d(f_{1} k_{1}, f_{2} k_{2}) : k_{1},k_{2} \in
      \hker{G*H} \big\}, \\
      d_{2}(f_{1} \hker{G}, f_{2} \hker{G})  &= \inf\big\{ d(f_{1} k_{1}, f_{2} k_{2}) : k_{1},k_{2} \in \hker{G} \big\}. \\
    \end{aligned}
  \end{displaymath}
  With respect to the (ultra)metrics \( d_{1} \) and \( d_{2} \) the homomorphism \( \iota_{G} \) is Lipschitz, hence
  continuous.

  \eqref{item:pi-continuous} It is enough to prove that \( \pi_{G} \) is continuous at the identity, i.e., that
  \( f_{n}\hker{G*H} \to \hker{G*H} \) implies \( \pi_{X}(f_{n})\hker{G} \to \hker{G} \).  The sequence
  \( f_{n} \hker{G*H} \) converges to \( \hker{G*H} \) if and only if there is a sequence \( h_{n} \in \hker{G*H} \)
  such that \( f_{n}h_{n} \to e \).  This implies \( \pi_{X}(f_{n}) \pi_{X}(h_{n}) \to e \) with
  \( \pi_{X}(h_{n}) \in \hker{G} \) by Lemma \ref{lem:pi-of-freepr-kernel-is-phi-kernel}, and therefore
  \( \pi_{G}(f_{n}\hker{G*H}) = \pi_{X}(f_{n})\hker{G} \to \hker{G}\).

  \eqref{item:trivial-intersection} If \( f\hker{G*H} \in \iota_{G}(G) \cap \iota_{H}(H) \), then
  \( f \hker{G*H} = f_{1} \hker{G*H} = f_{2} \hker{G*H} \) for some \( f_{1} \in \clF( \mathbf{X}) \) and
  \( f_{2} \in \clF(\mathbf{Y}) \).  Therefore
  \( \pi_{X}(f\hker{G*H}) = \pi_{X}(f_{1}\hker{G*H}) = \pi_{X}(f_{2}\hker{G*H}) \), but
  \( \pi_{X}(f_{1}\hker{G*H}) = f_{1}\hker{G} \) and \( \pi_{X}(f_{2}\hker{G*H}) = \hker{G} \), whereby
  \( f_{1} \in \hker{G}\) and thus \( f\hker{G*H} = \hker{G*H} \).

  \eqref{item:density-of-the-image} This item is obvious, since the group generated by the images of \( \iota_{G} \) and
  \( \iota_{H} \) is nothing else but
  \( \big\langle \clF(\mathbf{X}), \clF(\mathbf{Y}) \big\rangle \hker{G*H} \).

  \eqref{item:universality-property} Maps \( \psi_{G} \circ \phi_{G} : \invx \to T \) and \( \psi_{H} \circ \phi_{H} :
  \invy \to T \) are Lipschitz morphisms and so is the map \( \zeta : \invxcupy \to T \) given by
  \begin{displaymath}
    \zeta(z) = 
    \begin{cases}
      \psi_{G} \circ \phi_{G}(z) & \textrm{if \( z \in \invx \)},\\
      \psi_{H} \circ \phi_{H}(z) & \textrm{if \( z \in \invy \)}.\\
    \end{cases}
  \end{displaymath}
  By Proposition \ref{thm:exntesion-of-lipschitz-morphisms} the map \( \zeta \) extends to a continuous homomorphism
  \( \zeta : \clF(\mathbf{X} \cup \mathbf{Y}) \to T \).  Since \( \zeta \) extends both
  \( \psi_{G} \circ \phi_{G} \) and \( \psi_{H} \circ \phi_{H} \), the kernel of \( \zeta \) contains \( \hker{G} \) and
  \( \hker{H} \), and therefore also \( \hker{G*H} \).  Thus \( \zeta \) factors to a
  continuous homomorphism \( \psi : \frprod{G}{H} \to T \).  Uniqueness follows from item
  \eqref{item:density-of-the-image}.
\end{proof}

Note that items \eqref{item:injectivity}, \eqref{item:iota-continuous} and \eqref{item:pi-continuous} imply that
\( \iota_{G} \) and \( \iota_{H} \) are embeddings.  The homomorphisms \( \iota_{G} \), \( \iota_{H} \) can be extended
to a homomorphism from the free product of abstract groups \( \iota : G*H \to \frprod{G}{H} \).  Is the homomorphism
\( \iota \) injective?  We shall show in Corollary \ref{cor:no-cancellations-for-large-enough-scales.} that the answer
is yes when \( \phi_{G} \) and \( \phi_{H} \) are ``large enough''.

\begin{lemma}
  \label{lem:emdedding-of-Polish-groups-non-trivial-on-f}
  Let \( f \in G*H \) be a non-trivial element in the abstract free product.  There are a Polish group \( T \) and two
  embeddings \( \psi_{G} : G \to T \) and \( \psi_{H} : H \to T \) such that for the common extension of these
  homomorphisms \( \psi : G * H \to T \) one has \( \psi(f) \ne e \).  Moreover, if \( G \) and \( H \) admit compatible
  left invariant ultrametrics, then we may find \( T \) that also admits a compatible left invariant ultrametric.
\end{lemma}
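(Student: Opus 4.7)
The plan is to take $T$ to be the Polish free product $\frprod{G}{H}$ constructed earlier in this section, for an appropriately chosen pair of surjective Lipschitz morphisms, and then to verify that the image of $f$ survives this quotient. Concretely, I would write $f = f_1 f_2 \cdots f_n$ in reduced form with $f_i \in (G \cup H) \setminus \{e\}$ alternating between the two factors, fix compatible left-invariant (ultra)metrics $d_G$ on $G$ and $d_H$ on $H$ (via Birkhoff--Kakutani in the general case, and by hypothesis in the ``moreover'' clause), and then invoke the Ding--Gao result cited in the preamble of this section to pick separable scaled spaces $\mathbf{X}$, $\mathbf{Y}$ together with surjective Lipschitz morphisms $\phi_G \colon \mathbf{X} \to G$ and $\phi_H \colon \mathbf{Y} \to H$. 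Set $T := \frprod{G}{H}$ and $\psi_G := \iota_G$, $\psi_H := \iota_H$, $\psi := \iota$. Proposition~\ref{thm:properties-of-the-free-product} then guarantees that $T$ is a Polish group (admitting a compatible left-invariant ultrametric under the ``moreover'' hypothesis, inherited from the Graev ultrametric on $\clF(\mathbf{X} \cup \mathbf{Y})$) and that $\psi_G$, $\psi_H$ are topological embeddings of $G$ and $H$.

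What remains is to prove $\iota(f) \neq e$ in $T$. I would pick a lift $\tilde f = \tilde f_1 \cdots \tilde f_n \in F(\mathbf{X} \cup \mathbf{Y})$ with $\phi_G(\tilde f_i) = f_i$ or $\phi_H(\tilde f_i) = f_i$ as appropriate; the task reduces to a uniform lower bound $\inf_{k \in \hker{G*H}^{\circ}} \|\tilde f \cdot k\| > 0$ in the Graev norm, which then extends to the closure $\hker{G*H}$ by continuity. Every element of $\hker{G*H}^{\circ}$ is a product of conjugates of elements of $\hker{G} \cup \hker{H}$, so applying the Lipschitz retracts $\pi_X$, $\pi_Y$ (Proposition~\ref{prop:existence-of-retraction}) together with Lemma~\ref{lem:pi-of-freepr-kernel-is-phi-kernel} shows that the $\pi_X$- and $\pi_Y$-projections of $\tilde f \cdot k$ agree with those of $\tilde f$ modulo $\hker{G}$ and $\hker{H}$ respectively. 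I would then adapt the maximal evaluation forest machinery of Section~\ref{sec:graev-ultr-free} to the scaled setting, arguing that any evaluation forest witnessing substantial cancellation in a representative of $\tilde f \cdot k$ would, via the surjections $\phi_G$, $\phi_H$, descend to a cancellation in the abstract free product $G*H$ of the alternating letters $f_i$, contradicting the reducedness of $f$.

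The principal obstacle is exactly this Graev-norm lower bound, which is the combinatorial heart of the proof and sits parallel to—but is technically distinct from—the proof of Theorem~\ref{thm:Graev-ultrametric}. In that earlier theorem the factors $G$ and $H$ came equipped with two-sided invariant ultrametrics, which drove the whole maximal-forest analysis; here we only have left-invariant (ultra)metrics on $G$ and $H$, and the two-sided structure must be imported through the scales $\Gamma_X$, $\Gamma_Y$ on $\mathbf{X}$ and $\mathbf{Y}$. The key metric input I would exploit is the Lipschitz property of $\phi_G$ and $\phi_H$: because $\phi_G(\tilde f_i) = f_i \neq e$ and similarly for $H$, the fibers $\hker{G} \subseteq \phi_G^{-1}(e)$ and $\hker{H} \subseteq \phi_H^{-1}(e)$ sit at positive scaled distance from each letter $\tilde f_i$, so replacing $\tilde f_i$ by any nearby element in $\phi_G^{-1}(f_i)$ (or its $H$-analog) can only perturb the Graev norm by a controlled amount. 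Propagating this through the forest analysis is the delicate combinatorial step that turns the abstract nontriviality of $f$ into a strictly positive lower bound on $\|\tilde f \cdot k\|$, completing the proof.
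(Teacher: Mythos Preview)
Your plan has a genuine gap at exactly the point you flag as ``the combinatorial heart'': you never establish the Graev-norm lower bound \( \inf_{k \in \hker{G*H}^{\circ}} \norm{\tilde f \cdot k} > 0 \), and the tools you invoke do not deliver it. The retracts \( \pi_{X}, \pi_{Y} \) only control the images in \( G \) and \( H \) separately, so for a commutator-type word such as \( f = g h g^{-1} h^{-1} \) both projections are trivial and give no lower bound whatsoever. Your fallback is to ``adapt the maximal evaluation forest machinery of Section~\ref{sec:graev-ultr-free},'' but that machinery rests essentially on Proposition~\ref{prop:chracteristic-tsi-inequality-ultrametric}, i.e.\ on two-sided invariance of the metrics on the factors; this is what makes the transfer and symmetrization operations of Lemmas~\ref{lem:existence-of-simple-pair-with-same-rho} and~\ref{lem:symmetrization-preserves-rho} preserve \( \rho \). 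With only left-invariant metrics and scales you have no analogue of these steps, and ``importing two-sided structure through the scales'' is a hope, not an argument. Note also that in the paper's logical order this lemma is the input to Corollary~\ref{cor:no-cancellations-for-large-enough-scales.}, which is precisely the statement that \( \iota \) is injective for suitable \( \phi_{G}, \phi_{H} \); you are in effect trying to prove that corollary directly, and if you could, the lemma would be redundant.

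The paper avoids all of this by a short, concrete argument that has nothing to do with Graev metrics. It embeds \( G \) and \( H \) into a single injectively universal Polish group \( U \) --- namely \( \mathrm{Homeo}\bigl([0,1]^{\omega}\bigr) \) in the general case (Uspenskij) and \( S_{\infty} \) in the ultrametric case --- and then observes that the copy of \( H \) may be replaced by any conjugate \( \alpha H \alpha^{-1} \). Writing \( f = g_{n-1} h_{n-1} \cdots g_{0} h_{0} \), one chooses a single point \( x_{0} \) and, using \( m \)-homogeneity of the Hilbert cube (respectively, infinite supports in \( S_{\infty} \)), builds \( \alpha \in U \) by hand so that the orbit \( x_{0} \mapsto \alpha^{-1} x_{0} \mapsto h_{0}\alpha^{-1} x_{0} \mapsto \cdots \) visits pairwise distinct points; hence \( g_{n-1}\alpha h_{n-1}\alpha^{-1} \cdots g_{0}\alpha h_{0}\alpha^{-1}(x_{0}) \ne x_{0} \). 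Taking \( T = U \), \( \psi_{G} = \mathrm{id} \), \( \psi_{H} = \alpha(\cdot)\alpha^{-1} \) finishes the proof. The point is that the lemma asks only for a \( T \) that separates the \emph{one} given element \( f \), and a ping-pong on a single point in a universal group does this cheaply.
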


\begin{proof}
  Here we deal with metric and ultrametric cases separately.  First assume that \( G \) and
  \( H \) are general Polish groups.

  Let \( f \in G*H \) be given.  By conjugating \( f \) if necessary we may assume without loss of generality that
  \( f \) ``starts with \( g \)'', i.e, it is of the form \( f = g_{n-1}h_{n-1} \cdots g_{0}h_{0} \) for some
  \( n \ge 1 \) with non-trivial \( g_{i} \in G \) and \( h_{j} \in H \).  By a theorem of Uspenskij \cite{MR847156}
  (see also \cite[Theorem 9.18]{MR1321597}) the group \( \homgilcube \) of homeomorphisms of the Hilbert Cube with the
  compact-open topology is a universal Polish group in the following sense: any Polish group can be embedded into
  \( \homgilcube \).  In particular, \( G \) and \( H \) can be embedded into \( \homgilcube \); to simplify notations
  we assume that \( G \) and \( H \) are actual subgroups of \( \homgilcube \).  Note that \( \alpha H \alpha^{-1} \) is
  a copy of \( H \) inside \( \homgilcube \) for any \( \alpha \in \homgilcube \).  To prove the lemma it is therefore
  sufficient to construct a homeomorphism \( \alpha \in \homgilcube \) such that for some \( x_{0} \in \gilcube \)
  \[ g_{n-1}\alpha h_{n-1} \alpha^{-1} \cdots g_{0} \alpha h_{0} \alpha^{-1} (x_{0}) \ne x_{0}.  \]

  Pick any \( x_{0} \in \gilcube \) and any \( x_{1} \in \gilcube \) such that \( x_{1} \ne x_{0} \) and
  \( h_{0}(x_{1}) \not \in \{x_{1}, x_{0}\} \); set \( x_{2} = h_{0}(x_{1}) \).  Pick any \( x_{3} \in \homgilcube \)
  such that \( x_{3} \not \in \{x_{0}, x_{1}, x_{2} \} \) and \( g_{0}(x_{3}) \not \in \{x_{0}, x_{1}, x_{2}, x_{3}\}
  \); set \( x_{4} = g_{0}(x_{3}) \).  We continue in this fashion and construct a sequence
  \( (x_{k})_{k=1}^{4n} \) such that
  \begin{enumerate}[(i)]
  \item \( x_{i} \ne x_{j} \) for \( i \ne j \);
  \item \( h_{k}(x_{4k+1}) = x_{4k+2} \) for \( k=0, \ldots, n-1 \);
  \item \( g_{k}(x_{4k+3}) = x_{4k+4} \) for \( k=0, \ldots, n-1 \).
  \end{enumerate}

  \begin{figure}[h]
    \includegraphics{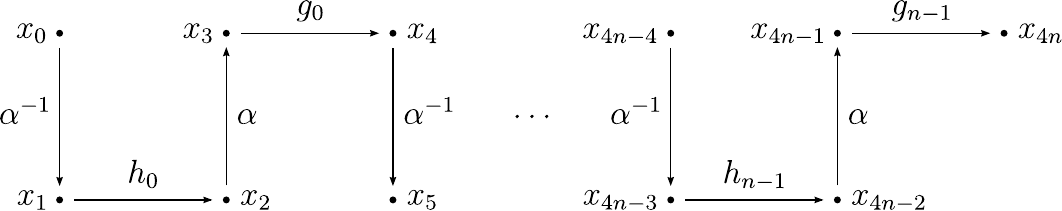}
    \caption{Construction of the homeomorphism \( \alpha \).}
    \label{fig:construction-of-alpha}
  \end{figure}

  For all \( m \in \mathbb{N} \) the space \( \gilcube \) is \( m \)-homogeneous: for tuples
  \( (y_{1}, \ldots, y_{m}) \) and \( (z_{1}, \ldots, z_{m}) \) of distinct elements there is a homeomorphism
  \( \alpha \in \homgilcube \) such that \( \alpha(y_{i}) = z_{i} \) (see, for example, \cite[Exercise 2,
  p. 261]{MR977744}).  Whence there is some \( \alpha \in \homgilcube \) such that \( \alpha(x_{4k+1}) = x_{4k} \) and
  \( \alpha(x_{4k+2}) = x_{4k+3} \) for all \( k = 0, \ldots, n-1 \).  For such an \( \alpha \) we have
  \[ g_{n-1}\alpha h_{n-1} \alpha^{-1} \cdots g_{0} \alpha h_{0} \alpha^{-1} (x_{0}) = x_{4n}, \]
  and \( x_{4n} \ne x_{0} \) by construction.

  In the ultrametric setting a similar argument works with the following modifications.  For the injectively universal
  group we take \( S_{\infty} \) (see \cite[Theorem 1.5.1]{MR1425877})and note that without loss of generality we may
  assume that \( G, H < S_{\infty} \), and every non-trivial element in \( G \) and \( H \) has infinite support.  This
  is so, because we can embed diagonally \( S_{\infty} \) into \( \prod_{n\in \mathbb{N}} S_{\infty} \) and view the
  latter again as a subgroup of \( S_{\infty} \) by partitioning the natural numbers into infinitely many infinite pieces.

  Now for \( f = g_{n-1}h_{n-1} \cdots g_{0}h_{0} \) in \( G*H \) using that supports of \( g_{i} \) and \( h_{j} \) are
  all infinite, we can easily find \( \alpha \in S_{\infty} \) such that
  \[ g_{n-1}\alpha h_{n-1} \alpha^{-1} \cdots g_{0} \alpha h_{0} \alpha^{-1} \ne e.  \]
  Such an element \( \alpha \) is again constructed as shown in Figure \ref{fig:construction-of-alpha}.
\end{proof}

\begin{theorem}
  \label{thm:Polish-groups-generate-free-product}
  There are a Polish group \( T \) and embeddings
  \( \psi_{G} : G \hookrightarrow T \), \( \psi_{H} : H \hookrightarrow T \) such that the group
  \( \langle \psi_{G}(G), \psi_{H}(H) \rangle \) is naturally isomorphic to the group \( G * H \).  Moreover, if \( G \)
  and \( H \) admit compatible left invariant ultrametrics, then \( T \) can be chosen to also admit a compatible left
  invariant ultrametric.
\end{theorem}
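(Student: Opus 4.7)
The plan is to set \( T := \frprod{G}{H} \) for any choice of surjective Lipschitz morphisms \( \phi_{G} : \mathbf{X} \to G \) and \( \phi_{H} : \mathbf{Y} \to H \) (available by \cite[Theorem 3.10]{MR2278689}, taking, e.g., \( X = Y = \mathbb{N}^{\mathbb{N}} \)), and to take \( \psi_{G} := \iota_{G} \), \( \psi_{H} := \iota_{H} \). In the ultrametric case \( T \) is a quotient of the ultrametric Polish group \( \clF(\mathbf{X} \cup \mathbf{Y}) \) and hence inherits a compatible left invariant ultrametric, so the ``moreover'' clause is automatic. By items \eqref{item:injectivity}, \eqref{item:iota-continuous}, \eqref{item:pi-continuous} of Proposition \ref{thm:properties-of-the-free-product}, the maps \( \iota_{G}, \iota_{H} \) are topological embeddings, and by \eqref{item:trivial-intersection} we have \( \iota_{G}(G) \cap \iota_{H}(H) = \{e\} \). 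Hence the common extension \( \iota : G * H \to T \) of \( \iota_{G}, \iota_{H} \) from the abstract free product is a well-defined homomorphism onto \( \langle \iota_{G}(G), \iota_{H}(H) \rangle \), and the theorem reduces to proving \( \iota \) is injective.

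To establish injectivity, fix a non-identity \( f \in G * H \). By Lemma \ref{lem:emdedding-of-Polish-groups-non-trivial-on-f} there exist a Polish group \( T_{f} \) (admitting a compatible left invariant ultrametric whenever \( G \) and \( H \) do) and embeddings \( \psi^{f}_{G} : G \hookrightarrow T_{f} \), \( \psi^{f}_{H} : H \hookrightarrow T_{f} \) whose common abstract-group extension \( \psi^{f} : G * H \to T_{f} \) satisfies \( \psi^{f}(f) \neq e \). The goal is to produce a continuous homomorphism \( \psi : T \to T_{f} \) with \( \psi \circ \iota_{G} = \psi^{f}_{G} \) and \( \psi \circ \iota_{H} = \psi^{f}_{H} \); then \( \psi \circ \iota = \psi^{f} \) on \( G * H \), whence \( \psi(\iota(f)) = \psi^{f}(f) \neq e \) and therefore \( \iota(f) \neq e \). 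Such a \( \psi \) is exactly what item \eqref{item:universality-property} of Proposition \ref{thm:properties-of-the-free-product} delivers.

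The main technical step is to equip \( T_{f} \) with a compatible left invariant (ultra)metric making \( \psi^{f}_{G}, \psi^{f}_{H} \) into \( 1 \)-Lipschitz maps that also satisfy \( \cansc{T_{f}}(\psi^{f}_{G}(g), r) \le \cansc{G}(g, r) \) and the analogous inequality for \( H \), so that item \eqref{item:universality-property} applies. Since the embeddings are continuous and all three groups admit compatible left invariant (ultra)metrics, one re-metrizes \( T_{f} \): starting from any compatible left invariant metric on \( T_{f} \) and composing with a sufficiently slowly-growing concave modulus of continuity makes both embeddings \( 1 \)-Lipschitz; in the ultrametric case one may instead restrict to a decreasing basis of open subgroups refining the images of \( G \)- and \( H \)-balls. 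Verifying the canonical scale inequality—which asks that conjugation distortion inside \( T_{f} \) be dominated by that inside \( G \) and \( H \) separately—is the principal obstacle. This is noticeably smoother in the ultrametric setting, where scale values lie in a discrete range and one can align the neighborhood basis of \( T_{f} \) with clopen subgroups of \( G \) and \( H \) directly.
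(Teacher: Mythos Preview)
Your proposal has a genuine gap at exactly the point you flag as ``the principal obstacle''. You want to apply item \eqref{item:universality-property} of Proposition \ref{thm:properties-of-the-free-product}, which requires finding a compatible left invariant (ultra)metric on \( T_{f} \) for which
\[
\cansc{T_{f}}\bigl(\psi^{f}_{G}(g),r\bigr) \le \cansc{G}(g,r)
\quad\text{and}\quad
\cansc{T_{f}}\bigl(\psi^{f}_{H}(h),r\bigr) \le \cansc{H}(h,r)
\]
for all \( g,h,r \). Unpacking the definition, the first inequality says that for \emph{every} \( t \in T_{f} \) with \( d_{T_{f}}(t,e) \le r \), conjugation by \( \psi^{f}_{G}(g) \) moves \( t \) no further than the worst conjugation distortion that \( g \) exhibits \emph{inside} \( G \). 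There is no reason this should be achievable: \( t \) ranges over a group much larger than \( \psi^{f}_{G}(G) \), and conjugation by \( g \) acting on foreign elements of \( T_{f} \) need not be controlled by how \( g \) acts on \( G \). Your re-metrization sketch (concave modulus, aligning clopen subgroups) addresses the Lipschitz condition but does nothing to bound this conjugation distortion. Note too that you have already fixed \( d_{G}, d_{H} \) (and the scales on \( \mathbf{X}, \mathbf{Y} \)) before \( f \) and \( T_{f} \) enter the picture, so you cannot adjust them to fit \( T_{f} \).

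The paper sidesteps this entirely. It does \emph{not} take \( T = \frprod{G}{H} \); instead it uses Lemma \ref{lem:emdedding-of-Polish-groups-non-trivial-on-f} directly. For each \( f \) one has \( T_{f} \) and \( \psi_{f} \) with \( \psi_{f}(f)\ne e \); a continuity argument shows the condition \( \psi_{f}(f')\ne e \) is open in the letters of \( f' \), so countably many \( f_{m} \) suffice to separate all of \( G*H \), and one sets \( T = \prod_{m} T_{f_{m}} \) with \( \psi = (\psi_{f_{m}})_{m} \). No universal property of \( \frprod{G}{H} \) is invoked, and the scale inequality never arises. Indeed, the logical order in the paper is the reverse of yours: Theorem \ref{thm:Polish-groups-generate-free-product} is proved first, and only \emph{then} is it used (Corollary \ref{cor:no-cancellations-for-large-enough-scales.}) to show that \( \iota : G*H \to \frprod{G}{H} \) is injective---and even there, only for \emph{specific} choices of \( d_{G}, d_{H}, \Gamma_{G}, \Gamma_{H} \) (namely restrictions from \( T \)), not for arbitrary \( \phi_{G}, \phi_{H} \) as your argument would require.
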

\begin{proof}
  Lemma \ref{lem:emdedding-of-Polish-groups-non-trivial-on-f} implies that for any non-trivial \( f \in G*H \) we may
  fix a Polish group \( T_{f} \) and a homomorphism \( \psi_{f} : G*H \to T_{f}\) such that
  \( \psi_{f}|_{G} : G \to T_{f}\) and \( \psi_{f}|_{H} : H \to T_{f} \) are embeddings and \( \psi_{f}(f) \ne e \).

  Let \( f \in G*H \) be given and assume that \( f \) has form \( g_{n-1}h_{n-1} \cdots g_{0} h_{0} \) for some
  non-trivial \( g_{i} \in G \), \( h_{j} \in H \) and \( n \ge 1 \).  By continuity of \( \psi_{f}|_{G} \) and
  \( \psi_{f}|_{H} \), and since \( \psi_{f}(f) \ne e \), there are neighbourhoods \( U^{(f)}_{i} \subseteq G \) of
  \( g_{i} \) and \( V^{(f)}_{j} \subseteq H \) of \( h_{j} \) such that
  \( e \not \in \psi_{G}(U^{(f)}_{n-1}) \psi_{H}(V^{(f)}_{n-1}) \cdots \psi_{G}(U^{(f)}_{0}) \psi_{H}(V^{(f)}_{0}) \).
  Therefore we can select a countable family \( (f_{m})_{m=1}^{\infty} \) of elements \( f_{m} \in G*H \) such that for
  any \( f \in G*H \) there is some \( m \) with \( \psi_{f_{m}}(f) \ne e \).

  Let \( T = \prod_{m} T_{f_{m}} \) be the direct product of the groups \( T_{f_{m}} \) and let \( \psi : G*H \) be 
  given by \( \psi(f)(m) = \psi_{f_{m}}(f) \).  The homomorphisms \( \psi|_{G} : G \to T \) and
  \( \psi|_{H} : H \to T \) are embeddings.  By the choice of the family \( (f_{m}) \) we also have \( \psi(f) \ne e \)
  for any non-trivial \( f \in G*H \) and therefore \( \psi \) is injective.
\end{proof}

\begin{corollary}
  \label{cor:no-cancellations-for-large-enough-scales.}
  There are left invariant compatible (ultra)metrics \( d_{G} \) and \( d_{H} \) on \( G \) and \( H \) respectively and
  scales \( \Gamma_{G} \) and \( \Gamma_{H} \) on \( (G,d_{G}) \) and \( (H, d_{H}) \) with the following property: if
  \( \mathbf{X} \) and \( \mathbf{Y} \) are (ultra)metric scaled spaces and \( \phi_{G} : \mathbf{X} \to G \),
  \( \phi_{H} : \mathbf{Y} \to H \) are surjective Lipschitz morphisms with respect to the scales \( \Gamma_{G} \) and
  \( \Gamma_{H} \), then the canonical homomorphism \( \iota : G*H \to \frprod{G}{H} \) is injective.
\end{corollary}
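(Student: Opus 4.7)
The plan is to combine the embedding Theorem \ref{thm:Polish-groups-generate-free-product} with the extension of Lipschitz morphisms from Proposition \ref{thm:exntesion-of-lipschitz-morphisms}, bypassing a direct appeal to the universal property \eqref{item:universality-property} of Proposition \ref{thm:properties-of-the-free-product}. First I would apply Theorem \ref{thm:Polish-groups-generate-free-product} to obtain a Polish (ultra)metric group \( (T, d_T) \) with embeddings \( \psi_G : G \hookrightarrow T \), \( \psi_H : H \hookrightarrow T \) whose common extension \( \tilde{\psi} : G*H \to T \) is injective. Taking \( d_T \) bounded, I equip \( G \) and \( H \) with bounded compatible left invariant (ultra)metrics \( d_G, d_H \) that render \( \psi_G, \psi_H \) Lipschitz; for example, starting from any bounded compatible left invariant (ultra)metric \( d_G^{(0)} \) on \( G \), the choice \( d_G = \max\{ d_G^{(0)},\, d_T \circ (\psi_G \times \psi_G) \} \) remains a compatible left invariant (ultra)metric (since \( d_G \ge d_G^{(0)} \) and \( \psi_G \) is continuous). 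Finally I define the scales
\[ \Gamma_G(g, r) = \max\bigl\{ \cansc{G}(g, r),\, \cansc{T}(\psi_G(g), r) \bigr\}, \qquad \Gamma_H(h, r) = \max\bigl\{ \cansc{H}(h, r),\, \cansc{T}(\psi_H(h), r) \bigr\}. \]
Boundedness of the metrics makes these finite, and the scale axioms reduce to those for \( \cansc{G}, \cansc{T}, \cansc{H} \), with the vanishing axiom coming from continuity of conjugation.

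Given any \( \mathbf{X}, \mathbf{Y}, \phi_G, \phi_H \) as in the hypothesis, the inequalities \( \Gamma_G \ge \cansc{G} \) and \( \Gamma_H \ge \cansc{H} \) imply that \( \phi_G, \phi_H \) are ordinary Lipschitz morphisms, so the Polish free product \( \frprod{G}{H} \) is well-defined. The key step is to consider the common extension
\[ \zeta : \invxcupy \to T, \qquad \zeta|_{\invx} = \psi_G \circ \phi_G,\quad \zeta|_{\invy} = \psi_H \circ \phi_H, \]
and to verify it is a Lipschitz morphism. The metric Lipschitz condition follows on each half from Lipschitzness of the constituents, and across the basepoint from the amalgam structure of \( \invxcupy \) over \( \{e\} \). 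The scale condition is the crux: for \( z \in \invx \),
\[ \cansc{T}\bigl(\zeta(z), r\bigr) \le \Gamma_G\bigl(\phi_G(z), r\bigr) \le \Gamma(z, r), \]
where the first inequality is the defining upper bound built into \( \Gamma_G \) and the second is \( \Gamma_G \)-Lipschitzness of \( \phi_G \); symmetrically for \( z \in \invy \).

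By Proposition \ref{thm:exntesion-of-lipschitz-morphisms} the map \( \zeta \) extends to a continuous homomorphism \( \zeta : \clF(\mathbf{X} \cup \mathbf{Y}) \to T \). Since \( \zeta|_{\clF(\mathbf{X})} = \psi_G \circ \phi_G \) vanishes on \( \hker{G} = \ker \phi_G \), and symmetrically on \( \hker{H} \), and since \( \ker \zeta \) is a closed normal subgroup of \( \clF(\mathbf{X} \cup \mathbf{Y}) \), it contains \( \hker{G*H} \). Hence \( \zeta \) descends to a continuous homomorphism \( \psi : \frprod{G}{H} \to T \) with \( \psi \circ \iota_G = \psi_G \) and \( \psi \circ \iota_H = \psi_H \). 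By the universal property of the abstract free product, \( \psi \circ \iota \) coincides with \( \tilde{\psi} \), which is injective, so \( \iota \) is injective.

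The main obstacle is engineering the scales \( \Gamma_G, \Gamma_H \) to play two roles at once: they must dominate \( \cansc{G}, \cansc{H} \) so that \( \Gamma_G \)-Lipschitzness is a sufficient hypothesis for defining \( \frprod{G}{H} \), and they must simultaneously dominate \( \cansc{T}(\psi_G(\cdot), \cdot), \cansc{T}(\psi_H(\cdot), \cdot) \) so that the composite \( \zeta \) is an ordinary Lipschitz morphism into \( T \). Direct application of Proposition \ref{thm:properties-of-the-free-product}\eqref{item:universality-property} is not available because \( \cansc{T}(\psi_G(g), r) \) may strictly exceed \( \cansc{G}(g, r) \) (conjugation in \( T \) ranges over a strictly larger set); the slack built into \( \Gamma_G \) is precisely what circumvents this obstruction.
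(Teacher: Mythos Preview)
Your argument is correct and follows the same strategy as the paper: apply Theorem~\ref{thm:Polish-groups-generate-free-product} to place \(G\) and \(H\) inside a common Polish group \(T\), manufacture \(d_G,d_H,\Gamma_G,\Gamma_H\) from the data of \(T\), and produce a continuous homomorphism \(\frprod{G}{H}\to T\) whose composition with \(\iota\) is the injective map \(\tilde\psi\). The paper's execution is a bit more economical: it simply takes \(d_G,d_H\) to be the restrictions of a single left invariant metric on \(T\) and sets \(\Gamma_G=\cansc{T}|_G\), \(\Gamma_H=\cansc{T}|_H\); with \(d_G=d_T|_G\) the supremum in \(\cansc{T}(g,r)\) ranges over a superset of that in \(\cansc{G}(g,r)\), so \(\cansc{T}|_G\ge\cansc{G}\) automatically and your explicit \(\max\) constructions for \(d_G\) and \(\Gamma_G\) become unnecessary. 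Your closing observation that item~\eqref{item:universality-property} of Proposition~\ref{thm:properties-of-the-free-product} does not apply verbatim is well taken---the hypothesis \(\cansc{T}(\psi_G(g),r)\le\cansc{G}(g,r)\) there can indeed fail---and the paper's appeal to that item should really be read as re-running its proof with the one-step chain \(\cansc{T}(\phi_G(x),r)=\Gamma_G(\phi_G(x),r)\le\Gamma(x,r)\), which is precisely what you spell out by constructing \(\zeta\) directly.
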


\begin{proof}
  By Theorem \ref{thm:Polish-groups-generate-free-product} we may assume that \( G \) and \( H \) are closed subgroups
  of a Polish group \( T \) and that \( \langle G, H \rangle \) is isomorphic to \( G*H \).  Let \( d \) be
  a compatible left invariant (ultra)metric on \( T \) and let \( d_{G} \) and \( d_{H} \) be the restrictions of
  \( d \) onto \( G \) and \( H \) respectively.  Finally, let \( \Gamma_{G} \) and \( \Gamma_{H} \) be the restrictions
  of \( \cansc{T} \) onto \( G \) and \( H \).  By item \eqref{item:universality-property} of Proposition
  \ref{thm:properties-of-the-free-product} the maps \( \phi_{G} \) and \( \phi_{H} \) extend to a homomorphism
  \( \phi : \frprod{G}{H} \to T \).  Since \( \langle G, H \rangle \cong G*H\), the homomorphism
  \( \iota : G*H \to \frprod{G}{H}\) must be injective.
\end{proof}

\bibliographystyle{alpha}
\bibliography{references}

\end{document}